\DeclareMathOperator{\cHom}{\mathscr{H}\text{\kern -3pt {\calligra\large om}}\,}
\let\emptyset\varnothing
\numberwithin{equation}{subsection}     
\newtheorem{Theorem}{Theorem}[subsection]
\newtheorem{Proposition}[Theorem]{Proposition}
\newtheorem{Lemma}[Theorem]{Lemma}
\newtheorem{Claim}[Theorem]{Claim}
\newtheorem{Corollary}[Theorem]{Corollary}
\newtheorem{Conjecture}[Theorem]{Conjecture}
\newtheorem*{Theorem*}{Theorem}
\theoremstyle{definition}
\newtheorem{Remark}[Theorem]{Remark}
\newtheorem{Example}[Theorem]{Example}
\newtheorem{Definition}[Theorem]{Definition}
\newtheorem*{Remark*}{Theorem}
\numberwithin{figure}{section}
\newcommand{\Coh}{\mathrm{Coh}}
\newcommand{\QCoh}{\mathrm{QCoh}}
\def\D{\mathrm{D}}
\def\F{\mathrm{F}}
\def\I{\mathrm{I}}
\def\J{\mathrm{J}}
\def\K{\mathrm{K}}
\def\O{\mathrm{O}}
\def\P{\mathrm{P}}
\def\T{\mathrm{T}}
\def\W{\mathrm{W}}
\def\X{\mathrm{X}}
\def\Z{\mathrm{Z}}
\def\Z{\mathrm{Z}}
\def\v{\mathrm{v}}
\def\w{\mathrm{w}}
\def\u{\mathrm{u}}
\def\e{\mathrm{e}}
\def\x{\mathrm{x}}
\def\m{\mathrm{m}}
\def\bbC{\mathbb{C}}
\def\bbG{\mathbb{G}}
\def\bbN{\mathbb{N}}
\def\bbQ{\mathbb{Q}}
\def\bbR{\mathbb{R}}
\def\bbZ{\mathbb{Z}}
\def\scrA{\mathscr{A}}
\def\scrI{\mathscr{I}}
\def\scrN{\mathscr{N}}
\def\scrO{\mathscr{O}}
\def\scrQ{\mathscr{Q}}
\def\scrS{\mathscr{S}}
\def\scrT{\mathscr{T}}
\def\scrX{\mathscr{X}}
\def\scrZ{\mathscr{Z}}
\def\calO{\mathfrak{O}}
\def\frakR{\mathfrak{R}}
\def\frakS{\mathfrak{S}}
\def\calA{\mathcal{A}}
\def\calB{\mathcal{B}}
\def\calC{\mathcal{C}}
\def\calD{\mathcal{D}}
\def\calE{\mathcal{E}}
\def\calF{\mathcal{F}}
\def\calG{\mathcal{G}}
\def\calH{\mathcal{H}}
\def\calI{\mathcal{I}}
\def\calM{\mathcal{M}}
\def\calN{\mathcal{N}}
\def\calO{\mathcal{O}}
\def\calP{\mathcal{P}}
\def\calR{\mathcal{R}}
\def\calS{\mathcal{S}}
\def\calT{\mathcal{T}}
\def\calX{\mathcal{X}}
\def\calZ{\mathcal{Z}}
\def\frakg{\mathfrak{g}}
\def\bfa{\mathbf{a}}
\def\bfc{\mathbf{c}}
\def\bfd{\mathbf{d}}
\def\bfe{\mathbf{e}}
\def\bfm{\mathbf{m}}
\def\bfr{\mathbf{r}}
\def\bft{\mathbf{t}}
\def\bfw{\mathbf{w}}
\def\bfz{\mathbf{z}}
\def\bfA{\mathbf{A}}
\def\bfC{\mathbf{C}}
\def\bfD{\mathbf{D}}
\def\bfE{\mathbf{E}}
\def\bfF{\mathbf{F}}
\def\bfG{\mathbf{G}}
\def\bfH{\mathbf{H}}
\def\bfL{\mathbf{L}}
\def\bfR{{\mathbf{R}}}
\def\bfT{\mathbf{T}}
\def\bfU{\mathbf{U}}
\def\bfW{\mathbf{W}}
\def\bfZ{\mathbf{Z}}
\def\cw{{{\check w}}}
\def\cz{{{\check z}}}
\def\comega{{{\check \omega}}}
\def\calpha{{{\check\alpha}}}
\def\cbfw{{{\check\bfw}}}
\def\cbfz{{{\check\bfz}}}
\def\blambda{{{\pmb\lambda}}}
\def\bmu{{{\pmb\mu}}}
\def\bomega{{{\pmb\omega}}}
\def\balpha{{{\pmb\alpha}}}
\def\bcomega{{{\pmb{\check \omega}}}}
\def\bcalpha{{{\pmb{\check\alpha}}}}
\def\Irr{\operatorname{Irr}\nolimits}
\def\dim{\operatorname{dim}\nolimits}
\def\mod{\operatorname{mod}\nolimits}
\def\nilp{\operatorname{nil}\nolimits}
\def\unip{\operatorname{uni}\nolimits}
\def\Mod{\operatorname{Mod}\nolimits}
\def\Rep{\operatorname{Rep}\nolimits}
\def\Coh{\operatorname{Coh}\nolimits}
\def\Im{\operatorname{Im}\nolimits}
\def\Ker{\operatorname{Ker}\nolimits}
\def\det{{{\operatorname{det}}}}
\def\red{{\operatorname{red}\nolimits}}
\def\ind{\operatorname{Ind}\nolimits}
\def\id{\operatorname{id}\nolimits}
\def\Spec{\operatorname{Spec}\nolimits}
\def\Max{\operatorname{Max}\nolimits}
\def\Ad{\operatorname{Ad}\nolimits}
\def\wt{\operatorname{wt}\nolimits}
\def\ev{{ev}}
\def\nil{\operatorname{nil}\nolimits}
\def\ch{\operatorname{ch}\nolimits}
\def\c1{\operatorname{c_1}\nolimits}
\def\gr{\operatorname{gr}\nolimits}
\def\GL{\operatorname{GL}}
\def\-{{{\text{-}}}}
\DeclareMathOperator{\calHom}{\mathscr{H}\text{\kern -3pt {\calligra\large om}}\,}
\def\Fl{{{\calF\ell}}}
\def\dG{{{\dot G}}}
\def\dbG{{{\dot\bfG}}}
\def\dbH{{{\dot\bfH}}}
\def\dbT{{{\dot\bfT}}}
\def\tG{{{\widetilde G}}}
\def\tbG{{{\widetilde\bfG}}}
\def\tbH{{{\widetilde\bfH}}}
\def\tbT{{{\widetilde\bfT}}}
\newcommand\Wedge{\scalerel*{\bigwedge}{j}}
\def\ind{\operatorname{ind}\nolimits}
\def\bfzeta{{{\pmb\zeta}}}
\def\bGm{{{\pmb\bbG_m}}}
\def\res{{{\operatorname{res}}}}
\def\ab{{{\operatorname{ab}}}}
\def\iw{{{\operatorname{c}}}}
\def\NH{{{\operatorname{NH}}}}
\def\cl{{\rm{cl}}}
\def\r{{\bf{r}}}
\def\al{\alpha}
\def\ttimes{\widetilde{\times}}
\def\F{{\sf{F}}}
\def\HC{{\calH\calC}}
\def\loc{{\ell oc}}
\def\CAlg{{{\mathrm{CAlg}}}}
\def\-{{{\text{-}}}}
\newcommand{\colim}{\operatornamewithlimits{colim}} 
\def\ttimes{\widetilde{\times}}
\def\domega{{{\underline\omega}}}
\def\dalpha{{{\underline\alpha}}}
\def\dmu{{{\underline\mu}}}
\def\dlambda{{{\underline\lambda}}}
\def\dV{{{\underline V}}}
\def\dW{{{\underline W}}}
\def\dv{{{\underline v}}}
\def\dw{{{\underline w}}}
\def\dvv{{{\underline\v}}}
\def\dww{{{\underline\w}}}
\def\dI{{{\underline I}}}
\def\dE{{{\underline E}}}
\def\dG{{{\underline G}}}
\def\dP{{{\underline P}}}
\def\dQ{{{\underline Q}}}
\def\dC{{{\underline C}}}
\def\dfrakg{{{\underline\frakg}}}
\def\dii{{{\underline i}}}
\def\djj{{{\underline j}}}
\def\dxx{{{\underline\x}}}
\def\duu{{{\underline\u}}}
\newcommand{\UD}[1]{{\underline{#1}}}
\def\dPhi{{{\underline\Phi}}}
\def\O{{\mathcal O}}
\def\Gr{\mathrm{Gr}}
\def\calF{{\mathcal{F}}}
\def\calG{{\mathcal{G}}}
\def\cN{{\mathcal{N}}}
\DeclareMathOperator{\Hom}{Hom}
\DeclareMathOperator{\End}{End}
\newcommand{\llb}{[\![}
\newcommand{\rrb}{]\!]}
\newcommand{\llp}{(\!(}	
\newcommand{\rrp}{)\!)}
\begin{document}

\title[Shifted affine quantum groups and Coulomb branches]{Representations of shifted affine quantum groups and Coulomb branches}

\author{M. Varagnolo} 
\address{\scriptsize{M.V. : ~CY Cergy Paris Universit\'e,  95302 Cergy-Pontoise, France,
UMR8088 (CNRS), ANR-24-CE40-3389 (GRAW).}}
\author{E. Vasserot} 
\address{\scriptsize{E.V. :~Universit\'e Paris Cit\'e, 75013 Paris, France, UMR7586 (CNRS), 
ANR-24-CE40-3389 (GRAW).
}}
\subjclass[2010]{17B37, 19E08, 81R10}

\begin{abstract}
We study the integral category $\scrO$ of shifted affine quantum groups of non symmetric types.
To do this we compute  the K-theoretic analog of the Coulomb branches with symmetrizers introduced by Nakajima and Weekes.
This yields an equivalence of the category $\scrO$ with a module category over a new type of quiver Hecke algebras.
At the decategorified level, this establishes a connection between the Grothendieck group of $\scrO$ and a finite-dimensional module 
over a simple Lie algebra of unfolded symmetric type. 
We explicitly describe this module in a few examples and provide a combinatorial rule for its crystal.\end{abstract}

\maketitle
\setcounter{tocdepth}{2}
\tableofcontents

\section{Introduction}

\subsection{Presentation}

Given an arbitrary Cartan matrix $\bfc$, a mathematical definition of the Coulomb branch of a 3D, N=4 
quiver gauge theory associated with two $I$-graded vector spaces $V$ and $W$ was given by Nakajima and Weekes in \cite{NW23}.
It was proved in \cite{NW23} that the quantization of the Coulomb branch is a truncated shifted Yangian, and the fixed point 
set of some $\bbC^\times$-action on the space of triples (i.e., the BFN space) associated with the Coulomb branch 
was computed. Some consequences for the module category of the truncated shifted Yangian were also discussed.
A related construction in physics in the context of 4D, N=2 quiver gauge theory was considered by Kimura and Pestun 
in \cite{KP18}.

In this paper we consider the quantized K-theoretic Coulomb branch with symmetrizers of the 3D, N=4 
quiver gauge theory associated with the Cartan matrix $\bfc$. Let $\calA$ denote this algebra.
We relate $\calA$ to a truncated shifted quantum loop group of type $\bfc$, generalizing the work of Finkelberg-Tsymbaliuk \cite{FT19} 
in the symmetric case, as stated in Propositions \ref{prop:Phi} and \ref{prop:onto}.

This construction allows us to link the integral category $\scrO$ of the truncated shifted quantum loop group
to a category of smooth representations of a completion $\calA^\wedge$ of the algebra $\calA$, see Proposition \ref{prop:HC}.
We then prove a version of the Segal-Thomason localization theorem which relates
$\calA^\wedge$ to the K-theory and Borel-Moore homology of the fixed point subset of the $\bbC^\times$-action on the BFN 
space, as stated in Theorem \ref{thm:Main1}. The fixed point set is computed explicitly following \cite{NW23}.
Its K-theory can be described in terms of a new version of QH algebras (=quiver Hecke algebras), 
which we call an integral $\bbZ$-weighted QH algebras.
This yields an equivalence from the integral category $\scrO$ of the truncated shifted quantum loop group
to the category of nilpotent modules of the corresponding integral $\bbZ$-weighted QH algebra. 
The integral $\bbZ$-weighted QH algebra is attached to the symmetric Cartan matrix $\dC$ obtained by unfolding $\bfc$.
It also depends on a grading given by $\bfc$ (and not by $\dC$ only).
While the presence of this unfolding was already observed in \cite{NW23}, the role of the integral 
$\bbZ$-weighted QH algebra is new and important.
For symmetric $\bfc$ the integral $\bbZ$-weighted QH algebra coincides with the parity QH algebra of type $\bfc$
considered in \cite{KTWWY19b}.
For non symmetric $\bfc$, the definition of the integral $\bbZ$-weighted QH algebra differs from the definition of 
the parity QH algebra of $\dC$.

The integral $\bbZ$-weighted QH algebra allows us to decategorify the integral category $\scrO$ 
of the truncated shifted quantum loop group
 in term of a finite dimensional module $V$ over the simple Lie algebra whose Cartan matrix is $\dC$, as stated in Theorem \ref{thm:Main2}.
 In other words, the complexified Grothendieck group of $\scrO$ is canonically identified with $V$ in such a way that 
 the classes of simple objects are taken to the dual canonical basis, Theorem \ref{thm:Main2}.
This is due to the fact, proved in Proposition \ref{prop:TT}, 
that integral $\bbZ$-weighted QH algebras are Morita equivalent to subalgebras
of tensor product algebras (in Webster's sense).
This finite-dimensional module $V$ is not generally known. It may not be simple.
We provide a few conditions it satisfies and compute it in type $B_2$.
We also give a (partly conjectural) combinatorial rule to compute this representation.
This rule uses a crystal which generalizes Nakajima's monomial crystal.

Our construction has several important consequences:
\begin{enumerate}[label=$\mathrm{(\alph*)}$,leftmargin=8mm,itemsep=1mm]
\item[$\bullet$]
It allows us to define a monoidal structure on the integral truncated shifted category $\scrO$ which extends the fusion product of
loop highest weight modules considered in \cite{H23}.
\item[$\bullet$]
The integral $\bbZ$-weighted QH algebra has a 
cohomological grading, as it is a convolution algebra in Borel-Moore homology.
Consequently, our equivalence of categories yields a grading on the category $\scrO$.
\item[$\bullet$]
The decategorification of $\scrO$ above express the $q$-character of the simple modules of the integral category $\scrO$ in terms 
of the dual canonical basis of $V$, see Corollary \ref{cor:qchar} and Theorem \ref{thm:Main2}.
\end{enumerate}
We will return to these questions in an upcoming paper \cite{VV26}.
Note also that, although we focus on finite types, many of our results extend naturally to the case of symmetrizable 
generalized Cartan matrices. 

Another motivation for this work comes from \cite{VV23}, where we provide a geometrization of (shifted) quantum loop groups of arbitrary 
types via the critical K-theory of quiver varieties, generalizing Nakajima's work on symmetric types in \cite{N01}.
Quiver varieties are the 3D mirror duals of Coulomb branches. 
We aim to better understand the relationships between these two constructions.

\subsection{Plan of the paper}
The plan of the paper is the following.
Section 2 introduces integral $\bbZ$-weighted QH algebras, 
which form a new family of QH algebras associated with Coulomb branches 
with symmetrizers. First, in \S\ref{sec:CQH}, we describe QH algebras modeled over of $\bbZ$-weighted flag manifolds, i.e.,  
spaces of nested sequences of finite-dimensional vector spaces labeled by $\bbZ$.
We compare them with the tensor product algebras introduced by Webster in \S\ref{sec:TPA}.
Next, in \S\ref{sec:IZQH}, we fix a 
non-symmetric Cartan matrix $\bfc$
whose Dynkin diagram a  folded Dynkin diagram of a symmetric Cartan matrix $\dC$.
We introduce the integral $\bbZ$-weighted QH algebras  of type $\bfc$,
which are modeled on $\bbZ$-weighted QH algebras of type $\dC$,
with an additional integrality condition that generalizes the parity QH algebras from 
\cite{KTWWY19b} in the symmetric case. 
We then prove that the module categories of 
integral $\bbZ$-weighted QH algebras are quotients of the module categories of tensor product algebras of type $\dC$.
Let $\pmb\frakg$ be complex simple Lie algebra of $\bfc$,
and $\dfrakg$ the complex simple Lie algebra of $\dC$.
Subsequently, we decategorify the integral $\bbZ$-weighted QH algebras ${}^0\scrT_\bmu^\rho$ by weight subspaces
in some $\dfrakg$-modules. 
While we do not explicitly compute these modules, we discuss their connections to \cite{KTWWY19b} in the symmetric case 
and we compute them in certain specific scenarios, such as the generic case.

Section 3 provides an introduction to Coulomb branches of 4D, N=2
quiver gauge theories with symmetrizers.
First, we introduce the BFN space with symmetrizers $\calR$, following \cite{NW23}.
To facilitate the application of K-theory later, we employ a variation of the formalism from \cite{CWb}, \cite{CWb1}, and \cite{CWb2}, 
which uses ind-tamely presented $\infty$-stacks of ind-geometric type. The theory of these stacks is briefly reviewed in Appendix B.
We then describe the fixed point locus of certain automorphisms in Proposition \ref{prop:fixedlocus}.
Finally, we introduce the Coulomb branches of 4D, N=2 
quiver gauge theories with symmetrizers $\calA_{\mu,R}^\lambda$ in Definition \ref{def:AR},
and we present a few fundamental results.

Section 4 begins with an introduction to shifted quantum groups $\bfU_{\bmu,R}$ 
and their integral category $\scrO$, following \cite{H23}.
Next, we introduce truncated shifted quantum groups and their module category 
${}^0\scrO^\rho=\bigoplus_\bmu{}^0\scrO_\bmu^\rho$ in Definitions \ref{def:O1}, \ref{def:O2}, 
along with the surjective algebra homomorphism 
$\Phi:\bfU_{\bmu,R}\otimes R_{T_W}\to\calA_{\mu,R}^\lambda$
in Proposition \ref{prop:Phi} that maps
to Coulomb branches with symmetrizers.
We then prove a localization theorem for Coulomb branches, employing techniques similar to those in \cite{VV10}, 
which identify the Coulomb branch $\calA_{\mu}^\lambda$
with the integral $\bbZ$-weighted QH algebras ${}^0\widetilde\scrT_\bmu^\rho$
after suitable completions, as stated in Theorem \ref{thm:Main1}. 
For related discussions, see also \cite{W19b}, where similar background is explored.
In the final section, leveraging the localization theorem, we establish a connection between the truncated shifted category 
$\scrO$ and integral $\bbZ$-weighted QH algebras in Theorem \ref{thm:Main2}, 
and we discuss a few implications at the decategorified level.
The main result is the following (Theorems \ref{thm:Main1} and \ref{thm:Main2}).

\begin{Theorem*}\hfill
\begin{enumerate}[label=$\mathrm{(\alph*)}$,leftmargin=8mm,itemsep=1mm]
\item
 ${}^0\scrO_\bmu^\rho$ is equivalent to a category of nilpotent modules over 
the integral $\bbZ$-weighted QH algebra ${}^0\scrT_\bmu^\rho$.
\item
There is a representation of $\dfrakg$ in $K({}^0\scrO^\rho)$ and an embedding of $K({}^0\scrO^\rho)$
into a tensor product of fundamental modules of $\dfrakg$ which takes the simple modules into  the dual canonical basis. 
\end{enumerate}
\end{Theorem*}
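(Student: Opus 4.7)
The plan is to derive both statements from the localization theorem (Theorem \ref{thm:Main1}) together with the surjection $\phi$ from Proposition \ref{prop:onto}. For part (a), I would use $\phi$ to pull back any module over the Coulomb branch with symmetrizers to a module over the truncated shifted quantum loop group. The integral category $\scrO_{\bmu,\bbZ}^\rho$ is cut out by a weight-integrality condition together with a local finiteness / nilpotency hypothesis, and the first task is to verify that these conditions are matched under $\phi$ by exactly those modules which factor through the completion of the Coulomb branch at the integral weight locus. The localization theorem identifies this completion with the integral quiver Hecke algebra ${}^0\scrT_\bmu^\rho$, so pulling back produces the desired equivalence between $\scrO_{\bmu,\bbZ}^\rho$ and the category of nilpotent ${}^0\scrT_\bmu^\rho$-modules. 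The chief obstacle is essential surjectivity: one must check that every nilpotent ${}^0\scrT_\bmu^\rho$-module extends uniquely to an $\scrO$-module through $\phi$, since $\phi$ is only a surjection of algebras. I would address this by a completed faithful-flatness argument in the spirit of the Yangian/KLR treatment of \cite{VV10}, combined with the generation of $\scrO_{\bmu,\bbZ}^\rho$ by standard/costandard modules whose images under $\phi$ can be identified directly with the corresponding standard modules in the quiver Hecke side.

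For part (b), I would combine (a) with the description, established in Section 2, of ${}^0\scrT_\bmu^\rho$ as a quotient of Webster's tensor product algebra of type $\dC$ associated with the sequence $\rho$. Webster's categorification theorem then provides a $\dfrakg$-action on the Grothendieck group of nilpotent modules over the tensor product algebra, an isomorphism of this Grothendieck group with the tensor product of fundamental modules indexed by $\rho$, and a bijection between simples and the dual canonical basis. Composing with (a) and the projection from the tensor product algebra to ${}^0\scrT_\bmu^\rho$ yields the embedding of $K(\scrO_\bbZ^\rho) = \bigoplus_\bmu K(\scrO_{\bmu,\bbZ}^\rho)$ into the tensor product of fundamental modules, the $\dfrakg$-action on the source, and the assertion that simples map to dual canonical basis elements.

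The delicate point is to check that the induction and restriction functors implementing the Chevalley generators of $\dfrakg$ at the categorical level descend compatibly to the quotients as $\bmu$ varies, so that the $\bmu$-decomposition of $\scrO_\bbZ^\rho$ matches the $\dfrakg$-weight decomposition of the target tensor product. Equivalently, the integrality condition distinguishing ${}^0\scrT_\bmu^\rho$ inside Webster's algebra must be preserved by these functors, with $\bmu$ shifting by the appropriate simple coroot under each induction step. This is where the nonsymmetric nature of $\bfc$ enters: one needs to verify that the unfolding from $\bfc$ to $\dC$ is respected by the induction/restriction biadjunctions, a point that is automatic in the symmetric case of \cite{KTWWY19b} but requires genuine verification here. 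Once this compatibility is in place, both claims of (b) follow from Webster's theorem applied to the unfolded symmetric type $\dC$ data, while the grading on $\scrO_\bbZ^\rho$ asserted in the introduction is inherited directly from the convolution grading on ${}^0\scrT_\bmu^\rho$.
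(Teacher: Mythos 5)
Your overall architecture (localization theorem plus Webster's categorification for the unfolded type $\dC$) is the one the paper uses, and your treatment of part (b) is essentially the paper's: the content of Proposition \ref{prop:Parity} is precisely the verification that the functors $\calE_\dii$, $\calF_\dii$ preserve the parity/integrality condition, so that ${}^0\!L(\dww)$ is a $\dfrakg$-submodule of $L(\dww)$ spanned by dual canonical basis vectors; the only imprecision is that one obtains an \emph{embedding} $K(\scrO_\bbZ^\rho)\cong{}^0\!L(\dww)^\vee\subset L(\dww)^\vee\subset L(\domega)^\vee$ into the tensor product of fundamentals, not an isomorphism with it.

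For part (a), however, you have misidentified the key difficulty, and the step you propose to fill the gap would not work as stated. Restriction along the surjection $\Phi_\mu^\rho:\bfU_\bmu\to\calA_\mu^\rho$ is automatically fully faithful onto the modules killed by its kernel, so there is no ``essential surjectivity'' or uniqueness-of-extension problem to solve by completed faithful flatness; moreover the truncated category is \emph{defined} as $\scrO_{\bmu,\bbZ}^\rho=\scrO_{\bmu,\bbZ}\cap\HC_\bmu^\rho$, so membership in the image of restriction is built into the statement. The two genuine inputs, which your sketch omits, are: (i) the Harish--Chandra machinery of \cite{DFO94} (Proposition \ref{prop:HC}), which identifies $\HC_\bmu^\rho$ with finitely generated smooth modules over the completion $\calA_\mu^{\rho,\wedge}$ --- this is what converts a module category over the uncompleted Coulomb branch into a category of nilpotent modules over the completed convolution algebra; and (ii) the matching of the category-$\scrO$ boundedness condition (weights of the $\phi_i^+$ bounded above, equivalently $\{\|\gamma\|\,;\,\bfW_{\rho,\gamma}(M)\neq 0\}$ bounded above) with the cyclotomic ideal of ${}^0\widetilde\scrT_\bmu^\rho$, which requires the explicit combinatorial comparison \eqref{equiv} coming from the flag-variety isomorphisms in Lemma \ref{lem:Morita}. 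Your appeal to ``generation of $\scrO_{\bmu,\bbZ}^\rho$ by standard/costandard modules whose images under $\phi$ can be identified with standard modules on the quiver Hecke side'' presupposes exactly the identification being proved and is not available a priori; as written, part (a) of your argument does not close.
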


The representation of $\dfrakg$ in the Grothendieck group $K({}^0\scrO^\rho)$ is not known in general. 
We define in Proposition \ref{prop:crystal} a crystal of type $\dC$
which is, conjecturally, isomorphic to the crystal of the $\dfrakg$-module $K({}^0\scrO^\rho)$.
This yields a combinatorial rule to compute the $\ell$-highest weight of all simple modules 
in ${}^0\scrO^\rho$ which holds true in type $B_2$, see \S\ref{sec:B2}.
We will come back to this in  \cite{VV26}.

The BFN space is infinite dimensional. We use the formalism of Cautis-Williams \cite{CWb} to represent it by an
ind-geometric derived $\infty$-stack. To facilitate the reading we recall the terminology of loc.~cit. in an appendix in Section 5.

Kamnitzer et al. have proved in \cite[thm.~1.2]{KTWWY19b} that, for symmetric types, the truncated category $\scrO$
of shifted Yangians is equivalent to a module category of some quiver Hecke algebra, denoted ${}_-P^\bfR$ in 
\cite[\S3.4]{KTWWY19b}. The algebra ${}_-P^\bfR$ there is the same as our algebra
${}^0\T^\dww$ in \eqref{0TT}. We deduce the following.

\begin{Corollary}
For symmetric types, the truncated category $\scrO$
of shifted Yangians and quantum loop groups are equivalent. The equivalence preserves the $q$-characters.
\end{Corollary}

\subsection{Notation}

Unless specified otherwise, all vector spaces, schemes or stacks are defined over $\bbC$.
 For any ordinary algebra $A$ let $A$-mod denote the category of all finitely generated ordinary $A$-modules.
Let $\calO=\bbC\llb z\rrb$ and $K$ its fraction field.
Let $K(\calC)$ denote the complexified Grothendieck group of an Abelian category $\calC$.
Taking $\calC$ to the category of finite dimensional rational representations of an affine group $G$,
we get $K(\calC)=R_G$ the complexified representation ring of $G$.
Let $H^\bullet_G$ be the cohomology algebra of $BG$.
Given a cocharacter $\gamma$ of $G$, we write $z^\gamma$ for the element
$\gamma(z)$ in $G$ for any $z\in\bbC^\times$.
We may abbreviate $\gamma$ for $z^\gamma$ if no confusion is possible.
We will use coherent sheaves over $\infty$-stacks. 
In doing so, we will use the same terminology as in \cite[\S 2]{CWb} to which we refer for more details.
To facilitate the reading, we recall the terminology in the appendix \ref{sec:CAT}.
For each integers $d$, $\ell$, $m$ we will abbreviate $\ell\equiv_dm$ for $\ell-m\in d\bbZ$.
For any ring $R$ and any set $I$ let $RI$ denote the set of $I$-tuple of elements of $R$ with finite support.
In particular, let $\delta_i\in RI$ be the Dirac function at $i$ for each $i\in I$.
For each finite dimensional $I$-graded vector space $V$ let $\dim_IV$ be the dimension vector in $\bbN I$.

\smallskip

\textbf{Acknowledgements.} It is a pleasure to thank D. Hernandez and 
R. Maksimau for inspiring discussions concerning this paper.

\section{Quiver Hecke algebras}
\subsection{$\bbZ$-weighted quiver Hecke algebras and tensor product algebras}\label{sec:QH}

\subsubsection{Preliminaries on quivers}\label{sec:Quivers}
For any connected finite quiver $Q=(I,E)$ with sets of vertices and arrows $I$ and $E$,
let $Q_f=(I_f,E_f)$ be the corresponding framed quiver.
The set of vertices is $I_f=I\times\{\circ, $\,\scalebox{0.6}{$\Box$}$\}$,
the set of arrows  $E_f$ consists of one arrow $(i,\circ)\to (j,\circ)$ for each arrow $i\to j$ in $E$,
and one arrow $(i,$\,\scalebox{0.6}{$\Box$}$)\to (i,\circ)$ for each vertex $i\in I$.
We will assume that the quiver $Q$ is of finite type,
bipartite,
and that the arrows in $E$ point from even to odd vertices. 
We will abbreviate $i\to j$ for $i\to j\in E$.
Let $p:I\to\{0,1\}$ be the parity.
We also fix a total order on the set $I$ such that even vertices are smaller than odd ones.
We may write 
$I=\{i_1<i_2<\dots<i_n\}.$
Note that 
\begin{align}\label{order1}i\to j\Rightarrow i<j.\end{align}
Let $C$ be the Cartan matrix of the quiver $Q$.
Given finite dimensional $I$-graded vector spaces 
$$V=\bigoplus_{i\in I}V_i,\quad
W=\bigoplus_{i\in I}W_i,$$
the  representation space of $Q_f$ in $W\oplus V$ is the space of linear maps
\begin{align}\label{Rep}\Rep_Q(V,W)=\Hom_I(W,V)\oplus\End_E(V)\end{align}
where
\begin{align*}
\Hom_I(W,V)&=\bigoplus_{i\in I}\Hom(W_i,V_i),\\
\End_E(V)&=\bigoplus_{i\to j\in E}\Hom(V_i,V_j)
\end{align*}
Let
\begin{align}\label{G}
G_V=\prod_i\GL(V_i)
,\quad
G_W=\prod_i\GL(W_i).
\end{align}
Let $T_V$ and $T_W$ be the diagonal maximal tori in $G_V$ and $G_W$.
We abbreviate
\begin{align}\label{GTLL1}
G=G_V,\quad
T=T_V.
\end{align}
Set 
\begin{align}\label{lam1}
\alpha=\dim_IV
,\quad
\lambda=\dim_IW
,\quad
\alpha=\sum_{i\in I}a_i\delta_i
,\quad
\lambda=\sum_{i\in I}l_i\delta_i
,\quad
\mu=\lambda-C\cdot\alpha=\sum_{i\in I}m_i\delta_i.
\end{align}
Note that
$a_i=\dim V_i$ and
$l_i=\dim W_i$.
Let $\Lambda_V$ and $\Lambda_W$
be the sets of cocharacters of $T_V$  and $T_W$.
Set
\begin{align}\label{GTLL2}
\Lambda_\alpha=\Lambda_V,\quad
\Lambda_\lambda=\Lambda_W.
\end{align}
Let $\Lambda_\alpha^+$ denote the cone of dominant cocharacters.
The affine group $G\times G_W$ acts on the representation space $\Rep_Q(V,W)$ in the obvious way.
Let 
$ (g, x)\mapsto g\cdot x$
denote the action map.
We abbreviate
\begin{align}\label{N} N_\mu^\lambda=N=\Rep_Q(V,W).\end{align}
A point in $ N_\mu^\lambda$ is a tuple $x=(A,B)$ where 
\hfill
\begin{enumerate}[label=$\mathrm{(\alph*)}$,leftmargin=8mm,itemsep=1mm]
\item
$A=(A_i\,;\,i\in I)$ is a tuple of maps $W_i\to V_i$ for each vertex $i\in I$, 
\item 
$B=(B_h\,;\,h\in E)$ is a tuple of maps $V_i\to V_j$ for each arrow $h:i\to j$ in $E$.
\end{enumerate}
Let $|v|$ denote the sum of the entries of a dimension vector $v$. Hence, we have
\begin{align}\label{length}
|\alpha|=\sum_{i\in I}a_i
,\quad
|\lambda|=\sum_{i\in I}l_i.
\end{align}
Let $\frakg$ be the complex simple Lie algebra of type $C$.
For each $i\in I$ let $\omega_i$ be the $i$th fundamental coweight, $\alpha_i$ the simple coroot,
and $\comega_i$, $\calpha_i$ the corresponding fundamental weights and simple roots.
Let $L(\lambda)$ be the irreducible $\frakg$-module with highest weight $\sum_{i\in I}l_i\,\comega_i$.
Given a sequence $w=(w_l)$ of dimension vectors in $\bbN  I$ with finite support,  we define
\begin{align}\label{Llambda}L(w)=\bigotimes_l L(w_l).\end{align}
For any $\frakg$-module $M$ and each dimension vector $\mu=\sum_{i\in I}m_i\,\delta_i$, 
let $M_\mu$ be the set of elements of $M$ of weight $\sum_{i\in I}m_i\,\comega_i$.

\subsubsection{$\bbZ$-weighted quiver Hecke algebras}\label{sec:CQH}
We now define a proper morphism 
\begin{align}\label{XN1}\pi_\mu^w:\scrX_\mu^w\to  N_\mu^\lambda.\end{align}
To do this, let $\P(\lambda)$ denote the set of all sequences  of dimension vectors in $(\bbN I)^\bbZ$ with sum  to $\lambda$,
and set
\begin{align}\label{Ibullet}I^\bullet=\bbZ\times I.\end{align}
Then, we fix a $\bbZ$-grading $(W^k)$ on the $I$-graded vector space $W$.
Choosing $(W^k)$ is the same as choosing an $I^\bullet$-graded
refinement of $W$, i.e., an $I^\bullet$-grading on $W$ compatible with its $I$-grading.
Let $w\in\P(\lambda)$ be the dimension sequence of $(W^k)$, i.e., the sequence $(w_k)$ such that $w_k=\dim_IW^k$.
Let $\Fl_\alpha$ be the set of $\bbZ$-weighted flags of $I$-graded vector subspaces of $V$, with its obvious algebraic structure.
A point of $\Fl_\alpha$ is a sequence of $I$-graded vector subspaces 
$F=(F^{\leqslant k})$ with $F^{\leqslant k}=0$ if $k\ll 0$, $F^{\leqslant k}\subset F^{\leqslant k+1}$ for all $k\in\bbZ$,
and  $F^{\leqslant k}=V$ if $k\gg 0$.
Set  $F^k=F^{\leqslant k}/F^{<k}$. 
We define the dimension sequence of the $\bbZ$-weighted flag $F$ to be $\dim_IF=(\dim_IF^k)$.
The space $\Fl_\alpha$ is a disjoint union of infinitely many connected components labelled by the set
$\P(\alpha)$ of all sequences  of dimension vectors with sum to  $\alpha$.
For $v\in\P(\alpha)$ we set
\begin{align}\label{flag}\Fl_v=\{F\in\Fl_\alpha\,;\,\dim_IF=v\}.\end{align}
The scheme
\begin{align}\label{X}
\scrX_\mu^w=\{(F,A,B)\in \Fl_\alpha\times  N_\mu^\lambda\,;\,
A(W^{\leqslant k})\subset F^{\leqslant k}\,,\,
B(F^{\leqslant k})\subset F^{< k}\,,\,
\forall k\in\bbZ\}
\end{align}
is the disjoint union of the smooth $G$-equivariant quasi-projective varieties
\begin{align}\label{XV}
\scrX_v^w=\{(F,A,B)\in \scrX_\mu^w\,;\,F\in\Fl_v\}
,\quad
v\in\P(\alpha).
\end{align}
Forgetting the $\bbZ$-weighted flag $F$ yields the  projective morphism \eqref{XN1}.
Let $\scrZ_\mu^w$ be the fiber product 
$$\scrZ_\mu^w=\scrX_\mu^w\times_{ N_\mu^\lambda} \scrX_\mu^w.$$
We equip the complexified Grothendieck group of the category of $G$-equivariant coherent sheaves on 
$\scrZ_\mu^w$ and its $G$-equivariant Borel-Moore homology space with the convolution product
as in \cite{CG}, \cite{VV11}.
This yields two algebras over the rings $R_G$ and $H^\bullet_G$ respectively
\begin{align}\label{QH}
{}_\K\widetilde\scrT_\mu^w=K^G(\scrZ_\mu^w)
,\quad
\widetilde\scrT_\mu^w=H_\bullet^G(\scrZ_\mu^w,\bbC)
\end{align}
For each sequence $v\in\P(\alpha)$ the fundamental class of the diagonal of
$\scrX_v^w$ yields an idempotent 
\begin{align}\label{eQH1}e_{w,v}\in{}_\K\widetilde\scrT_\mu^w\,,\,\widetilde\scrT_\mu^w\end{align} 
For any module $M$ over ${}_\K\widetilde\scrT_\mu^w$ or $\widetilde\scrT_\mu^w$ we call the subspace
$e_{w,v} M$ the $\ell$-weight space of $M$ of $\ell$-weight $v$.
Set
$$\max(v)=\max\{k\in\bbZ\,;\,v_k\neq 0\}\in\bbZ\cup\{-\infty\}.$$
We define the the cyclotomic ideals ${}_K\scrI_\mu^w$ and $\scrI_\mu^w$
in ${}_\K\widetilde\scrT_\mu^w$ and $\widetilde\scrT_\mu^w$ respectively
to be the two sided ideals generated by the idempotents $e_{w,v}$ such that
\begin{align}\label{cyclo}\max(w)<\max(v)\end{align}
Taking the quotients we get the algebras
${}_\K\scrT_\mu^w={}_\K\widetilde\scrT_\mu^w\,/\,{}_K\scrI_\mu^w$
and
$\scrT_\mu^w=\widetilde\scrT_\mu^w\,/\,\scrI_\mu^w$. 
We abbreviate
$$\widetilde\scrT^w=\bigoplus_\mu\widetilde\scrT_\mu^w
,\quad
\scrT^w=\bigoplus_\mu\scrT_\mu^w
,\quad
{}_K\widetilde\scrT^w=\bigoplus_\mu{}_K\widetilde\scrT_\mu^w
,\quad
{}_K\scrT^w=\bigoplus_\mu{}_K\scrT_\mu^w
.$$
To avoid confusions with other versions of QH algebras, we call
$\widetilde\scrT^w$, $\scrT^w$, ${}_K\widetilde\scrT^w$ and ${}_K\scrT^w$
$\bbZ$-weighted QH algebras.

\subsubsection{The Chern character}\label{sec:Chern}
The $H^\bullet_G$-algebra $\widetilde\scrT_\mu^w$  is graded by the homological degree.
Its degreewise completion is the topological algebra
$$\widetilde\scrT_\mu^{w,\wedge}=\lim_k\big(\widetilde\scrT_\mu^w\,/\,(H_G^{+})^k\cdot \widetilde\scrT_\mu^w\big)$$
where $H_G^{+}\subset H_G^\bullet$ is the augmentation ideal.
 
\begin{Definition}\label{def:smooth-nil}
\hfill
\begin{enumerate}[label=$\mathrm{(\alph*)}$,leftmargin=8mm,itemsep=1mm]
\item
A smooth representation $M$ over $\widetilde\scrT_\mu^{w,\wedge}$ is a 
module with a continuous action, where $M$ is given the discrete topology.
A nilpotent module of $\widetilde\scrT_\mu^w$ is
a representation which extends to a smooth representation
of $\widetilde\scrT_\mu^{w,\wedge}$. 
Let 
$\widetilde\scrT_\mu^w\-\nilp$ be the category of finitely generated nilpotent modules
with finite dimensional $\ell$-weight spaces.
\item
Let $\scrT_\mu^w\-\nilp$  be the category of finitely generated modules
with finite dimensional $\ell$-weight spaces.
\end{enumerate}
\end{Definition}

We define the unipotent completion of 
${}_\K\widetilde\scrT_\mu^w$  similarly.
More precisely, for any affine group $H$ with a semisimple element $h$, the evaluation at $h$ yields
an algebra homomorphism $R_H\to\bbC$.
Let $\bfm_{H,h}$ be its kernel.
For any $R_H$-module $M$ and any maximal ideal $\bfm\subset R_H$,
we set
\begin{align}\label{complete}M_{\widehat\bfm}=\lim_k(M/\bfm^kM).\end{align}
We abbreviate  $\bfm_h=\bfm_{H,h}$ and $M_{\widehat h}=M_{\widehat\bfm_h}$.
We apply this construction with $H=G$ and $h=1$. Set 
\begin{align}\label{unipotent}
{}_\K{\widetilde\scrT_\mu}^{w,\wedge}=({}_\K{\widetilde\scrT_\mu}^{w})_{\widehat 1}.
\end{align}
A module over ${}_\K\widetilde\scrT_\mu^w$  is unipotent if the action extends to a smooth representation
of the unipotent completion of the $\bbZ$-weighted QH algebra.
Let 
${}_\K\widetilde\scrT_\mu^w\-\unip$
be the category of finitely generated unipotent modules
with finite dimensional $\ell$-weight spaces, and
${}_\K\scrT_\mu^w\-\unip$ be the category of finitely generated modules
with finite dimensional $\ell$-weight spaces.

\begin{Lemma}\label{lem:Chern}
There are equivalence of categories  ${}_\K\widetilde\scrT_\mu^w\-\unip\cong\widetilde\scrT_\mu^w\-\nilp$ 
and ${}_\K\scrT_\mu^w\-\unip\cong\scrT_\mu^w\-\nilp$.
\end{Lemma}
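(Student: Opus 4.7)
The plan is to produce an isomorphism of topological algebras between the unipotent completion $({}_\K\widetilde\scrT_\mu^w)_{\widehat 1}$ and the degreewise completion $\widetilde\scrT_\mu^{w,\wedge}$ via an equivariant bivariant Chern character, and then to identify both categories in the statement with a single category of finitely generated modules over this completed algebra whose weight spaces are finite dimensional. The argument is a standard one in the KLR/affine Hecke literature, following the pattern used in \cite{VV10} and \cite{CG}, adapted here to the Steinberg-type variety $\scrZ_\mu^w = \scrX_\mu^w \times_{\calN_\mu^\lambda}\scrX_\mu^w$.

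First I would construct, componentwise along the decomposition $\scrX_\mu^w = \bigsqcup_{v\in\P(\alpha)}\scrX_v^w$ into smooth $G$-varieties, an equivariant Chern character
$$\ch : K^G(\scrZ_\mu^w) \longrightarrow \widetilde\scrT_\mu^{w,\wedge}$$
using the bivariant Riemann--Roch formalism of \cite[Ch.~5]{CG}; multiplicativity with respect to the convolution product is the bivariant Riemann--Roch theorem. Next, I would compare the $R_G$-module structure on the source and the $H^\bullet_G$-module structure on the target: the exponential map identifies the $\bfm_1$-adic completion of $R_G$ with the augmentation-adic completion of $H^\bullet_G$, so the Chern character extends to a map of the corresponding completions of the convolution algebras. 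Localization for each smooth $\scrX_v^w$ then shows that $\ch$ becomes an isomorphism on associated gradeds, and hence induces a topological isomorphism
$$({}_\K\widetilde\scrT_\mu^w)_{\widehat 1} \xrightarrow{\sim} \widetilde\scrT_\mu^{w,\wedge}$$
compatible with the idempotents $e_{w,v}$. Passing to the cyclotomic quotient, which is generated by the same idempotents on both sides, yields an analogous isomorphism for $({}_\K\scrT_\mu^w)_{\widehat 1}$ and $\scrT_\mu^{w,\wedge}$.

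Finally, I would match the module categories. By definition, a nilpotent $\widetilde\scrT_\mu^w$-module is one on which the action extends continuously to the degreewise completion, and a unipotent ${}_\K\widetilde\scrT_\mu^w$-module is one on which the action extends continuously to the unipotent completion; together with the finite-dimensionality of weight spaces, each category is therefore canonically equivalent to the category of finitely generated modules over the respective completed algebra with finite-dimensional weight spaces. Transporting across the isomorphism of completed algebras gives both claimed equivalences. The main obstacle is the second step: establishing that $\ch$ becomes an isomorphism after the prescribed completions, despite $\scrZ_\mu^w$ being merely a fiber product of smooth pieces and $\Fl_\alpha$ decomposing as a countable disjoint union indexed by $\P(\alpha)$. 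Controlling convergence in this infinite direct sum and ensuring that the topologies on the two sides match under $\ch$ (so that the idempotents $e_{w,v}$ and the cyclotomic ideal are preserved) is where the careful bookkeeping sits; once done, the equivalences follow formally.
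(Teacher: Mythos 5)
Your proposal follows essentially the same route as the paper: the paper's proof simply invokes the bivariant Chern character of \cite{CG} to obtain a topological algebra isomorphism ${}_\K\widetilde\scrT_\mu^{w,\wedge}\cong\widetilde\scrT_\mu^{w,\wedge}$ identifying unipotent with nilpotent modules, and then notes that this isomorphism preserves the cyclotomic condition so the equivalence descends to the quotients. Your expanded account of the multiplicativity, the completion comparison, and the idempotent bookkeeping is a correct elaboration of that same argument.
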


\begin{proof}
The modified Chern character in \cite{CG} yields a topological algebra isomorphism
$${}_\K\widetilde\scrT_\mu^{w,\wedge}\cong\widetilde\scrT_\mu^{w,\wedge}.$$
This isomorphism yields an equivalence
$${}_\K\widetilde\scrT_\mu^w\-\unip\cong\widetilde\scrT_\mu^w\-\nilp.$$
The equivalence descends to an equivalence
$${}_\K\scrT_\mu^w\-\unip\cong\scrT_\mu^w\-\nilp,$$ because the isomorphism
 ${}_\K\widetilde\scrT_\mu^{w,\wedge}\cong\widetilde\scrT_\mu^{w,\wedge}$ identifies the cyclotomic ideals.
\end{proof}

\subsubsection{Tensor product algebras}\label{sec:TPA}
Fix the dimension vectors $\alpha$, $\lambda$, $\mu$ as in \eqref{lam1}, and
fix a decomposition $W=\bigoplus_{s=1}^m\W^s$ of the $I$-graded vector space $W$ with non zero parts.
We define the  set $\P(\lambda,m)$ to be
$$\P(\lambda,m)=\{\w=(\w_1,\w_2,\dots,\w_m)\in(\bbN I)^m\,;\,\w_s\neq 0\,,\,\w_1+\w_2+\cdots+\w_m=\lambda\}.$$
Let $\w\in \P(\lambda,m)$ be the $m$-tuple of dimension vectors, i.e., 
we set  $\w_s=\dim_I \W^s$.
For  each $\alpha\in\bbN I$ with $h=|\alpha|$ we define 
\begin{align}\label{tuples}
\begin{split}
\I(\alpha)&=\{\v=(\v_1,\v_2,\dots,\v_h)\in I^h\,;\,\v_1+\v_2+\cdots+\v_h=\alpha\},\\
\K(\alpha,m)&=\{ \kappa\in\bbN^m\,;\,\kappa_1\leqslant\kappa_2\leqslant\cdots\leqslant\kappa_m\leqslant h\}
,\\
\D(\alpha,m)&=\I(\alpha)\times\K(\alpha,m).
\end{split}
\end{align}
For each pair $(\v,\kappa)\in\D(\alpha,m)$, 
let $\F \ell_\v$ be the set of $h$-flags of $I$-graded vector spaces 
$0=\F^{\leqslant 0}\subset \F^{\leqslant 1}\subset \cdots\subset \F^{\leqslant h}=V$ with $\dim_I \F^l=\v_l$, 
and let $\X_{\v,\kappa}^\w$ be the smooth quasi-projective $G$-variety 
\begin{align}\label{XX}
\begin{split}
\X_{\v,\kappa}^\w=\{&(\F,A,B)\in\F\ell_\v\times  N_\mu^\lambda\,;\,A(\W^{\leqslant s})\subset \F^{\leqslant \kappa_s}\,,\,
B(\F^{\leqslant l})\subset \F^{<l}\}
\end{split}
\end{align}
Let $\X_\mu^\w$ be the sum of the varieties $\X_{\v,\kappa}^\w$ as the pair $(\v,\kappa)$ runs over the set $\D(\alpha,m)$.
Forgetting the flag F yields a projective morphism 
\begin{align}\label{XN2}\pi_\mu^\w:\X_\mu^\w\to  N_\mu^\lambda.\end{align}
Let $\Z_\mu^\w$ be the fiber product
$$\Z_\mu^\w=\X_\mu^\w\times_{ N_\mu^\lambda}\X_\mu^\w.$$
We define $\widetilde\T_\mu^\w$ to be the convolution algebra in equivariant Borel-Moore homology
$$\widetilde\T_\mu^\w=H_\bullet^G(\Z_\mu^\w,\bbC).$$
The multiplication is the obvious convolution product.
For each pair $(\v,\kappa)$ as above the fundamental class of the diagonal of $\X_{\v,\kappa}^\w$
yields an idempotent $\e_{\w,\v,\kappa}$ in $\widetilde\T^\w_\mu$. 
The cyclotomic ideal of $\widetilde\T_\mu^\w$ is the two sided ideal generated by the idempotents $\e_{\w,\v,\kappa}$
such that
\begin{align}\label{cyclo1}\kappa_m<h.\end{align}
We define the algebra $\T_\mu^\w$ to be the quotient
$$\T_\mu^\w=\widetilde\T_\mu^\w\,/\,\text{cyclotomic\ ideal}.$$
We also write
$$\widetilde\T^\w=\bigoplus_\mu\widetilde\T_\mu^\w,\quad\T^\w=\bigoplus_\mu\T_\mu^\w.$$
The algebras $\widetilde\T^\w$ and $\T^\w$ were introduced in \cite{W17}, and are called tensor product algebras.
They have the following alternative description.
The Crawley-Boevey quiver of the quiver $Q$ and the dimension vector $\w$ is the quiver $Q^{(\w)}$
with the set of vertices $I\sqcup\{1,\dots,m\}$
and the set of arrows equal to the sum of $E$ and $\w_{s,i}$ arrows
$s\to i$ for each $s=1,\dots,m$ and $i\in I$, where $\w_s=\sum_{i\in I}\w_{s,i}\,\delta_i$.
The tensor product algebra $\widetilde\T_\mu^\w$ is known to be isomorphic
to a QH algebra of type $Q^{(\w)}$, see \cite{W17}.
In particular, it is graded by the homological degree, and the cyclotomic quotient
$\T_\mu^\w$ is finite dimensional. 
Let 
$$\widetilde\T^\w\-\nilp,\quad \T^\w\-\nilp$$ be the categories of finite dimensional modules
of $\widetilde\T^\w$ and $\T^\w$.
Let $L(\w)$ be the $\frakg$-module 
\begin{align}\label{LW}L(\w)=\bigotimes_{s=1}^mL(\w_s).\end{align}
By \cite[thm.~B]{W17}, there is a linear isomorphism
\begin{align}\label{decat0}
L(\w)^\vee\cong
K(\T^\w\-\nilp)
\end{align}
which identifies the weight subspace $L(\w)_\mu^\vee$ with the complexified Grothendieck group $K(\T_\mu^\w\-\nilp)$.
More precisely, there are exact endofunctors 
\begin{align}\label{EF}\calE_i,\calF_i\in\End(\T^\w\text{-}\nilp),\quad i\in I
\end{align}
which categorify the $\frakg$-action on $L(\w)^\vee$ under
the isomorphism in \eqref{decat0}.
Further, the basis of $L(w)^\vee$ formed by the image under the
isomorphism \eqref{decat0} of the simple nilpotent modules of $\T^\w$ is the dual canonical basis
considered in \cite{W12}, \cite{BW16}.

\begin{Remark}\label{rem:w-omega}
Consider the tuple $\omega\in \I(\lambda)$ given by
\begin{align}\label{omega}
\begin{split}
\omega&=\big({i_1}^{(\w_{1,i_1})}\,,\,{i_2}^{(\w_{1,i_2})}\,,\,\dots,{i_n}^{(\w_{1,i_n})}\,,\,
{i_1}^{(\w_{2,i_1})}\,,\,\dots\,,\,{i_n}^{(\w_{m,i_n})}\big)
\end{split}
\end{align}
Taking $\omega$ instead of $\w$ we get the algebras $\T_\mu^\omega$ and $\T^\omega$.
Set $m'=|\lambda|$.
For all $\v\in\I(\alpha)$ there is an inclusion 
$$\K(\alpha,m)\to\K(\alpha,m')
,\quad
\kappa=(\kappa_1,\dots,\kappa_m)\mapsto \kappa'=(\kappa_1^{(|\w_1|)},\cdots,\kappa_m^{(|\w_m|)})$$
with an obvious isomorphism of $G$-varieties
$\X_{\v,\kappa}^\w\cong \X_{\v,\kappa'}^\omega$,
where 
$|\w_s|=\sum_{i\in I}\w_{s,i}.$
Taking the homology, this isomorphism yields an algebra embedding 
$\T^\w\subset\T^\omega$.
Under the isomorphism \eqref{decat0}, 
the restriction from $\T^\omega$ to $\T^\w$ is the transpose of the obvious embedding 
$L(\w)\subset L(\omega)$ of representations of $\frakg$.
\end{Remark}

\subsubsection{$\bbZ$-weighted quiver Hecke algebras and tensor product algebras}\label{sec:TPA}
In this section we compare the $\bbZ$-weighted QH algebra with the tensor product one.
Recall that $i_1<i_2<\dots <i_n$ is the total order on the set $I$ introduced in \eqref{order1}.
Fix a sequence $w=(w_l)$ in $\P(\lambda)$ and let   $l_1<l_2<\dots< l_m$ be the increasing integers such that
\begin{align}\label{l1}\{l_1,l_2,\dots,l_m\}=\{l\in\bbZ\,;\,w_l\neq 0\}.\end{align}
We consider the map
\begin{align}\label{map-1}
\tau:\P(\lambda)\to \P(\lambda,m),\quad w\mapsto \w
\end{align}
such that
$w_{l_s}=\w_s$ for all $s=1,\dots,m.$
With the notation in \eqref{Llambda}, \eqref{LW} we have a $\frakg$-module isomorphism
\begin{align}\label{LLW}L(w)=\bigotimes_{l\in\bbZ}L(w_l)\cong L(\w)=\bigotimes_{s=1}^mL(\w_s).\end{align}
Now, fix $\alpha\in\bbN I$ with $h=|\alpha|$.
Let $\J(\alpha,m)$ the set of triples  $(\x,\v,\kappa)$ in $\bbZ^h\times\D(\alpha,m)$
such that
\begin{enumerate}[label=$\mathrm{(\alph*)}$,leftmargin=8mm,itemsep=1mm]
\item[(1)]
$\x_k=\x_{k+1}\Rightarrow \v_k\leqslant \v_{k+1}$,
\item[(2)]
$\x$ is weakly increasing,
\item[(3)]
$\x_{\kappa_s}\leqslant l_s<\x_{\kappa_s+1}$ for each $s=1,\dots,m$.
\end{enumerate}
We define the element $\e_{\w,\mu}$ in $\widetilde\T_\mu^\w$ given by
\begin{align}\label{ewmu1}
\e_{\w,\mu}=\sum_{\v,\kappa}\e_{\w,\v,\kappa}
\end{align}
where the sum runs over all pairs $(\v,\kappa)$ such that there is  tuple $\x\in\bbZ^h$ 
with $(\x,\v,\kappa)\in\J(\alpha,m)$.

\begin{Proposition}\label{prop:TT} Assume that $\w=\tau(w)$.
\hfill
\begin{enumerate}[label=$\mathrm{(\alph*)}$,leftmargin=8mm,itemsep=1mm]
\item
The module categories $\widetilde\scrT_\mu^w$-$\nil$ and 
$\e_{\w,\mu}\cdot\widetilde\T_\mu^\w\cdot\e_{\w,\mu}$-$\nil$ are equivalent.
\item
The module categories 
$\scrT_\mu^w$-$\nil$ and $\e_{\w,\mu}\cdot\T_\mu^\w\cdot\e_{\w,\mu}$-$\nil$ are equivalent.
\item
We have $K(\scrT_\mu^w\-\nilp)^\vee\subset L(w)_\mu$.
\end{enumerate}
\end{Proposition}

\begin{proof}
For each sequence $v=(v_k)$ in $\P(\alpha)$ we set
\begin{align}\label{vvk}v_k=\sum_{i\in I}v_{k,i}\,\delta_i
,\quad
|v_{k}|=\sum_{i\in I}v_{k,i}.
\end{align}
Let $k_1<k_2<\dots<k_\ell$ be the integers such that
\begin{align}\label{k1}\{k_1,k_2,\dots,k_\ell\}=\{k\in\bbZ\,;\,v_k\neq 0\}.\end{align}
Then $(v_{k_1},\dots,v_{k_\ell})\in\P(\alpha,\ell).$
Let $\v\in\I(\alpha)$ be the tuple  such that
\begin{align}\label{v1}
\v=(\,{i_1}^{(v_{k_1,{i_1}})}\,,\,{i_2}^{(v_{k_1,i_2})}\,,\,\dots,{i_n}^{(v_{k_1,i_n})}\,,\,{i_1}^{(v_{k_2,i_1})}\,,\,\dots,
{i_n}^{(v_{k_\ell,i_n})}\,)\end{align}
where $i^{(u)}=i,i,\dots,i$ with multiplicity $u$.
There is an obvious $G$-equivariant map 
\begin{align}\label{FF1}\F\ell_\v\to\Fl_v,\quad \F\mapsto F\end{align}
such that $F$ is the $\bbZ$-weighted flag given by
\begin{align}\label{FF2}F^{\leqslant k_r}=\F^{\leqslant |v_{k_1}|+\cdots+|v_{k_r}|}
,\quad
\forall r=1,\dots,\ell\end{align}
We claim that for each sequence $v\in\P(\alpha)$ there is a tuple $\kappa\in\K(\alpha,m)$ such that the map
\begin{align}\label{map0}
\pi:\P(\alpha)\to \D(\alpha,m),\quad v\mapsto(\v,\kappa)
\end{align}
gives the following diagram 
\begin{align}\label{diag0}
\begin{split}
\xymatrix{\F\ell_\v\ar[d]&\ar[l]\X_{\pi(v)}^\w\ar[rd]\ar[d]&\\
\Fl_v&\ar[l]\scrX_v^w\ar[r]&\scrN_\mu^\lambda}
\end{split}
\end{align}
Here the square is Cartesian and the right maps are as in \eqref{XN1}, \eqref{XN2}.
To construct the map \eqref{map0}, recall that
$\w_s=w_{l_s}$ for all $s$.
Fix a $\bbZ$-grading $(W^l)$ on a $I$-graded vector space $W$ with dimension sequence $w$. 
We have $W=\bigoplus_{s=1}^m\W^s$ where $\W^s=W^{l_s}$.
Let $\kappa$ be the $m$-tuple such that 
\begin{align}\label{kappa1}
\kappa_s=\sum_{k_r\leqslant l_s}|v_{k_r}|
,\quad\forall s=1,\dots,m,
\end{align}
For $B\in\End_E(V)$ we have
\begin{align}\label{isomb}
\begin{split}
B(\F^{\leqslant l})\subset \F^{<l},\ \forall l=1,\dots,h
&\iff B(F^{\leqslant k_r})\subset F^{<k_r},\ \forall r=1,\dots,\ell\\
&\iff B(F^{\leqslant k})\subset F^{<k},\ \forall k\in\bbZ
\end{split}
\end{align}
In the first equivalence the implication $\Leftarrow$ follows from  \eqref{FF2}, while 
$\Rightarrow$ follows from \eqref{order1}  and \eqref{v1} which imply that the endomorphism of $F^{k_r}$ induced by $B$
 is lower triangular in a basis adapted to the complete flag $\F$.
The second equivalence follows from 
$$k_r\leqslant k<k_{r+1}\Rightarrow
F^{<k_r}\subset F^{<k}\ \text{and}\ F^{\leqslant k}=F^{\leqslant k_r}$$
Similarly,  set $[s]=\max\{r\,;\,k_r\leqslant l_s\}$. Then, we have
\begin{align}\label{isomc}
\begin{split}
A(\W^{\leqslant s})\subset\F^{\leqslant\kappa_s},\ \forall s=1,\dots,m
&\iff A(W^{\leqslant l_s})\subset F^{\leqslant k_{[s]}},\ \forall s=1,\dots,m,\\
&\iff A(W^{\leqslant l_s})\subset F^{\leqslant l_s},\ \forall s=1,\dots,m,\\
&\iff A(W^k)\subset F^{\leqslant k},\ \forall k\in\bbZ
\end{split}
\end{align}
The first equivalence follows from the equalities $\W^s=W^{l_s}$ and $\F^{\leqslant \kappa_s}=F^{\leqslant k_{[s]}}$,
which are consequences of \eqref{l1}, \eqref{FF2},  \eqref{kappa1}.
The second equivalence follows from the condition
$k_{[s]}\leqslant l_s<k_{[s]+1}$, which implies that $F^{\leqslant k_{[s]}}=F^{\leqslant l_s}$.
The third equivalence follows from the relations $W^{\leqslant k}=W^{\leqslant l_s}$ and $F^{\leqslant l_s}\subset F^{\leqslant k}$
for each $l_s\leqslant k<l_{s+1}$, which are consequences of  \eqref{l1}.
From \eqref{X}, \eqref{XX}  and \eqref{isomb}, \eqref{isomc} we deduce that the map \eqref{FF1} extends to a diagram as in \eqref{diag0}, 
proving the claim.

Let $v,v'\in\P(\alpha)$. 
We have
\begin{align*}
\e_{\w,\pi(v)}\cdot\widetilde\T_\mu^\w\cdot \e_{\w,\pi(v')}&=
H_\bullet^G(\X_{\pi(v)}^\w\times_{ N_\mu^\lambda}\X_{\pi(v')}^\w,\bbC)
\\
e_{w,v}\cdot\widetilde\scrT_\mu^w\cdot e_{w,v'}&=
H_\bullet^G(\scrX_v^w\times_{ N_\mu^\lambda}\scrX_{v'}^w,\bbC)
\end{align*}
By \eqref{diag0}, there are finite dimensional vector spaces
$M_v$ and an algebra isomorphism
\begin{align}\label{TcalT}
\bigoplus_{v,v'\in\P(\alpha)}\e_{\w,\pi(v)}\cdot\widetilde\T_\mu^\w\cdot \e_{\w,\pi(v')}
\cong
\bigoplus_{v,v'\in\P(\alpha)}e_{w,v}\cdot\widetilde\scrT_\mu^w\cdot e_{w,v'}\otimes\Hom(M_{v'},M_v)
\end{align}
The multiplication in the right side is the tensor product of the convolution and the composition of matrices.
Let us consider the following idempotent
\begin{align}\label{ewmu2}
\e'_{\w,\mu}=\sum_{(\v,\kappa)\in\Im(\pi)}\e_{\w,\v,\kappa}
\end{align}
Recall the following fact.

\begin{Lemma}\label{lem:Morita}
Let $A$, $B$  be algebras.
Assume that $\{e_i\,;\,i\in I\}$ is a complete and finite family of orthogonal idempotents of $A$,
and  $\{f_j\,;\,j\in J\}$ is a family of orthogonal idempotents of $B$ such that $B=\bigoplus_{j_2,j_1}f_{j_2}Bf_{j_1}$.
Let $\pi:J\to I$ be a map with a section $\sigma:\pi(J)\to J$.
Set $e=\sum_{i\in\pi(J)}e_i$.
Assume further that there are finite dimensional vector spaces $M_j$ for all $j\in J$, and linear isomorphisms
$$e_{i_2}Ae_{i_1}\cong f_{j_2}Bf_{j_1}\otimes\Hom(M_{j_1},M_{j_2})
,\quad
i_1=\pi(j_1)
,\quad
i_2=\pi(j_2),$$ 
such that the multiplication in $B$ is given by
$(a\otimes\phi)\cdot(b\otimes\psi)=ab\otimes(\phi\circ\psi)$.
Then, the category of finite dimensional $eAe$-modules
is equivalent to the category of finitely generated $B$-modules $M$ such that
$\dim(f_jM)<\infty$ for all $j\in J$.
\qed
\end{Lemma}

Note that in the lemma above we do not assume the set $J$ to be finite.
From \eqref{TcalT} and Lemma \ref{lem:Morita}, we deduce that there 
is an equivalence  of categories
$$\widetilde\scrT_\mu^w\-\nil\cong\e'_{\w,\mu}\cdot\widetilde\T_\mu^\w\cdot\e'_{\w,\mu}\-\nil.$$
To finish the proof of Part (a) we must check that the idempotents
 $\e_{\w,\mu}$ and $\e'_{\w,\mu}$ in \eqref{ewmu1} and \eqref{ewmu2} coincide. To do so,
 for each sequence $v\in\P(\alpha)$ let $k_1<k_2<\dots<k_\ell$ be as in \eqref{k1} and consider the tuple of integers $\x$ given by
\begin{align}\label{xx}
\x=\big(k_1^{(|v_{k_1}|)}\,,\,\dots,k_\ell^{(|v_{k_\ell}|)}\big)
\end{align}
where the integer $|v_k|$ is as in \eqref{vvk}.
Let $\J(\alpha)$ be the set of pairs  in $\bbZ^h\times\I(\alpha)$ satisfying the conditions (1) and (2) above.
We have the bijections
\begin{align}\label{map1}
\P(\alpha)\cong\J(\alpha,m)\cong\J(\alpha)
,\quad
v\mapsto(\x,\v,\kappa)\mapsto(\x,\v)
\end{align}
given by \eqref{v1},  \eqref{kappa1} and \eqref{xx}.
Under these bijections the map $\pi$ in \eqref{map0} is identified with the map
$$\J(\alpha,m)\to\D(\alpha,m)
,\quad
(\x,\v,\kappa)\mapsto(\v,\kappa).$$

To prove (b), note that the cyclotomic conditions \eqref{cyclo} and \eqref{cyclo1} are compatible because
\begin{align*}
\max(w)<\max(v)\iff l_m<k_\ell\iff [m]<\ell\iff \kappa_m<h
\end{align*}
Hence the algebras $\scrT_\mu^w$ and $\e_{\w,\mu}\cdot\T_\mu^\w\cdot\e_{\w,\mu}$ are also Morita equivalent.

If $W=0$ then (c) follows from \cite{KL09}.
If $W\neq 0$, then the category 
$\e_{\w,\mu}\cdot\T_\mu^\w\cdot\e_{\w,\mu}\text{-}\nilp$
is the quotient of the category $\T_\mu^\w\text{-}\nilp$
by the Serre subcategory generated by all
simple modules killed by the idempotent $\e_{\w,\mu}$.
Hence, the exact functor $M\mapsto \e_{\w,\mu} \cdot M$ yields a surjective morphism of Grothendieck groups
$$K(\T_\mu^\w\text{-}\nilp)\to K(\e_{\w,\mu}\cdot\T_\mu^\w\cdot\e_{\w,\mu}\text{-}\nilp).$$
Then (c) follows from (a), (b)  and \eqref{decat0}.
\end{proof}

From \S\ref{sec:TPA} we deduce the following.

\begin{Corollary}\label{cor:DCB}
The images by the embedding $K(\scrT_\mu^w\-\nilp)\subset L(w)_\mu^\vee$
in Lemma $\ref{prop:TT}$ of the simple nilpotent modules of $\scrT_\mu^w$ belong to the dual canonical basis.
\qed
\end{Corollary}

\begin{Remark}\label{rem:presentation}
\hfill
\begin{enumerate}[label=$\mathrm{(\alph*)}$,leftmargin=8mm,itemsep=1mm]
\item
A presentation of the algebra $\widetilde\T^\w$ is computed 
in  \cite[\S 4]{W17} and \cite[\S 3.2-4.2]{W19} with the same method as in \cite{VV11}.
We deduce that  $\widetilde\T^\w$ is isomorphic to the algebra
$\widetilde T^\w$ used in \cite[\S 3.1]{KTWWY19b}.

\item
The isomorphism \eqref{TcalT} is compatible with the cyclotomic quotients.
Since  $\T_\mu^\w$ is finite dimensional, we deduce that
$e_{w,v}\cdot\scrT_\mu^w\cdot e_{w,v'}$ is finite dimensional for each sequences $v,$ $v'$.
In particular, the $H_G^\bullet$-action on $\scrT_\mu^w$
is nilpotent.
Hence we do not need to complete $\scrT_\mu^w$.

\item
If $W=0$ then the same proof as in Proposition \ref{prop:TT} implies that
the restricted dual of $K(\widetilde\scrT_\mu^0\-\nilp)$ is isomorphic to $U_\alpha^+$.
\end{enumerate}
\end{Remark}

\subsection{Integral $\bbZ$-weighted quiver Hecke algebras for non symmetric Cartan matrices}
\label{sec:IZQH}

\subsubsection{Non symmetric Cartan matrices}\label{sec:NSCD}
By a Cartan datum we mean a pair ($\bfc,\bfd)$ where
$\bfc=(c_{ij}\,;\,i,j\in I)$ is a Cartan matrix and $\bfd=(c_i\,;\,i\in I)$ a symmetrizer of $\bfc$.
Recall that the $c_i$'s are positive integers such that $c_ic_{ij}=c_jc_{ji}$.
Let $c=\gcd(c_i\,;\,i\in I)$, $d_i=c_i/c$ and $b_{ij}=d_ic_{ij}$. 
Let $\pmb\frakg$ be the complex simple Lie algebra with Cartan matrix $\bfc$.
For each $i\in I$ let $\bomega_i$ be the $i$th fundamental coweight and $\balpha_i$ the simple coroot.
Let $\bcomega_i$, $\bcalpha_i$ be the corresponding fundamental weights and simple roots.
It is known that the reflexive, symmetric and transitive closure of the relation 
$$(i,r)\sim(j,s)\iff (b_{ij}\neq 0 \ \text{and}\ s=r+b_{ij})$$
has two equivalences classes in $I^\bullet$.
Let
\begin{align}\label{0I}
{}^0\!I\,,\,\dI\subset I^\bullet
\end{align} 
be one of these equivalences classes and
the subset given by
\begin{align}\label{dI}
\dI=\{(i,r)\in{}^0\!I\,;\,i\in I\,,\,r\in[1,2c_i]\}.
\end{align}
Let $Q=(I,E)$ and $\dQ=(\dI,\dE)$ be simply laced quivers such that
\hfill
\begin{enumerate}[label=$\mathrm{(\alph*)}$,leftmargin=8mm,itemsep=1mm]
\item[$\bullet$]
$i\to j\in E$ or $j\to i\in E\iff c_{ij}<0$,
\item[$\bullet$]
$\dE=\{(i,r)\to(j,s)\,;\,i\to j\in E\,,\,s-r+b_{ij}\in 2\gcd(c_i,c_j)\cdot\bbZ\}$ 
\end{enumerate}
Assume that the quiver $Q$ satisfies the conditions in \S\ref{sec:Quivers}.
Hence $Q$ is bipartite with parity $p$ and the set $I$ is given a total order such that \eqref{order1} holds.
Let $C$ be the Cartan matrix of $Q$ and $\dC$ the Cartan matrix of $\dQ$.
The quiver $\dQ$ is bipartite with parity given by $p(i,r)=p(i)$, and
the arrows in $\dE$ point from even to odd vertices. 
We fix the total order on the set $\dI$ such that 
$(i,r)<(j,s)\Rightarrow i<j$. Hence, even vertices are smaller than odd ones.
Let $\dfrakg$ be the complex simple Lie algebra of type $\dC$. 
From now on, unless specified otherwise we will assume that $c=1$.
A case-by-case checking yields the following choices.
Set $I=\{1,2,\dots,n\}$ and $\bfd=(d_1,\dots,d_{n-1},d_n)$. \\[-4mm]

\begin{enumerate}[label=$\mathrm{(\alph*)}$,leftmargin=8mm,itemsep=2mm]
\item[$\bullet$]
If $\bfc=A_n$, $D_n$, $E_n$ then $C=\dC=\bfc$ and ${}^0\!I=\{(i,p(i)+r)\,;\,r\in 2\bbZ\,,\,i\in I\}$.
\item[$\bullet$]
If $\bfc=B_n$ and $\bfd=(2,2,\dots,2,1)$, then $C=A_n$, $\dC=A_{2n-1}$ and ${}^0\!I=\{(i,r)\,;\,r\in 2\bbZ\,,\,i\in I\}$.
We choose the parity on $Q$ such that $p(n)=0$. The picture of $\dQ$ is\\[-2mm]

\begin{center}
\begin{tikzpicture}[scale=1, 
every node/.style={circle, draw, minimum size=0.7cm, inner sep=0pt, font=\footnotesize}, >=stealth 
 ]

\node (1) at (3,0) {n,2};
\node (2) at (1.5,0.8) {n-1,2};
\node (3) at (1.5,-0.8) {n-1,4 };
\node (4) at (0,0.8) {n-2,4 };
\node (5) at (0,-0.8) {n-2,2 };

\draw[->, line width=0.8pt, scale=1.5] (4) -- (2);  
\draw[->, line width=0.8pt, scale=1.5] (5) -- (3);  
\draw[->, line width=0.8pt, scale=1.5] (1) -- (2);  
\draw[->, line width=0.8pt, scale=1.5] (1) -- (3);  
\draw[dashed,-stealth] (4) -- ++(-1.5,0);
\draw[dashed,-stealth] (5) -- ++(-1.5,0);

\end{tikzpicture}
\end{center}

\item[$\bullet$]
If $\bfc=C_n$ and $\bfd=(1,1,\dots,1,2)$, then $C=A_n$, $\dC=D_{n+1}$ and ${}^0\!I=\{(n,r),(i,p(i)+r)\,;\,r\in 2\bbZ\,,\,i\neq n\}$.
We choose the parity on $Q$ such that $p(n)=1$. The picture of $\dQ$  is

\begin{center}
\begin{tikzpicture}[scale=1, every node/.style={circle, draw, minimum size=0.7cm, inner sep=0pt, font=\footnotesize}, 
>=stealth 
]

\node (1) at (0,0) {n-3,2};
\node (2) at (1.5,0) {n-2,1};
\node (3) at (3,0) {n-1,2};
\node (4) at (4.5,0.8) {n,2};
\node (5) at (4.5,-0.8) {n,4};

\draw[->, line width=0.8pt, scale=1.5] (1) -- (2);  
\draw[->, line width=0.8pt, scale=1.5] (3) -- (2);  
\draw[->, line width=0.8pt, scale=1.5] (3) -- (4);  
\draw[->, line width=0.8pt, scale=1.5] (3) -- (5);  
\draw[dashed,-stealth] (1) -- ++(-1.5,0);

\end{tikzpicture}
\end{center}

\item[$\bullet$]
If $\bfc=F_4$ and $\bfd=(2,2,1,1)$, then $C=A_4$, $\dC=E_6$ and ${}^0\!I=\{(4,1+r),(i,r)\,;\,r\in 2\bbZ\,,\,i\neq 4\}$.
We choose the parity on $Q$ such that $p(4)=1$. The picture of $\dQ$ is

\begin{center}
\begin{tikzpicture}[scale=1, 
every node/.style={circle, draw, minimum size=0.7cm, inner sep=0pt, font=\footnotesize}, >=stealth 
 ]

\node (0) at (4.5,0) {4,1};
\node (1) at (3,0) {3,2};
\node (2) at (1.5,0.8) {2,2};
\node (3) at (1.5,-0.8) {2,4 };
\node (4) at (0,0.8) {1,4 };
\node (5) at (0,-0.8) {1,2 };

\draw[->, line width=0.8pt, scale=1.5] (4) -- (2);  
\draw[->, line width=0.8pt, scale=1.5] (5) -- (3);  
\draw[->, line width=0.8pt, scale=1.5] (1) -- (2);  
\draw[->, line width=0.8pt, scale=1.5] (1) -- (3);  
\draw[->, line width=0.8pt, scale=1.5] (1) -- (0);  

\end{tikzpicture}
\end{center}

\item[$\bullet$]
If $\bfc=G_2$ and $\bfd=(3,1)$, then $C=A_2$, $\dC=D_4$ and ${}^0\!I=\{(2,1+r),(1,r)\,;\,r\in 2\bbZ\}$.
We choose the parity on $Q$ such that $p(2)=0$. The picture of $\dQ$ is
\begin{center}
\begin{tabular}{c@{\hspace{2cm}}c}  

\begin{tikzpicture}[scale=1, 
    every node/.style={circle, draw, minimum size=0.7cm, inner sep=0pt, font=\footnotesize}, 
    >=stealth 
]

\node (1) at (0,0) {1,4};
\node (2) at (1.5,0) {2,1};
\node (3) at (0,0.8) {1,2};
\node (4) at (0,-0.8) {1,6};

\draw[->, line width=0.8pt, scale=1.5] (2) -- (1);  
\draw[->, line width=0.8pt, scale=1.5] (2) -- (3);  
\draw[->, line width=0.8pt, scale=1.5] (2) -- (4);  

\end{tikzpicture}

\end{tabular}
\end{center}

\end{enumerate}

\subsubsection{Integral $\bbZ$-weighted quiver Hecke algebras}\label{sec:IQHA}
Let the quiver $\dQ$ be as in \S\ref{sec:NSCD}.
Let $\dI=\{\dii_1<\dii_2<\dots<\dii_n\}$ be the set of vertices of $\dQ$.
For any finite dimensional $\dI$-graded vector spaces $\dV$ and $\dW$, we define
\begin{align}\label{dlam1}
\dalpha=\dim_{\dI}\dV,
\quad
\dlambda=\dim_{\dI}\dW
,\quad
\dmu=\dlambda-\dC\cdot\dalpha\end{align}
We apply the construction in \S\ref{sec:CQH} to the quiver $\dQ$.
By \eqref{Rep}, \eqref{N}, the representation variety of $\dQ_f$ is
\begin{align}\label{dN}N_\dmu^\dlambda=\Rep_\dQ(\dV,\dW)
\end{align} 
We define the $\dG$-equivariant quasi-projective varieties 
$\scrX_\dmu^\dw$ and $\scrX_\dv^\dw$ as in \eqref{X} and \eqref{XV}.
Taking the equivariant Borel-Moore homology, we define the convolution algebra $\widetilde\scrT_\dmu^\dw$ as in \eqref{QH}.
Let $\scrT_\dmu^\dw$ be the cyclotomic quotient of $\widetilde\scrT_\dmu^\dw$.
The goal of this section is to introduce a refinement of this construction, to define subalgebras
${}^0\widetilde\scrT_\dmu^\dw$
and
${}^0\scrT_\dmu^\dw$
of the algebras
$ \widetilde\scrT_\dmu^\dw$
and
$\scrT_\dmu^\dw$
respectively.
To do this, we say that a pair $(\dii,k)\in\dI^\bullet$ with $\dii=(i,r)\in\dI$ and $k\in\bbZ$ is even if and only if
\begin{align}\label{even1}
k\equiv_{2d_i}r
\end{align}  Further, we say that a
sequence of dimension vectors $\dw_k=\sum_\dii\dw_{k,\dii}\,\delta_\dii$ in $\bbN\dI$
is even if $\dw_{k,\dii}=0$ for each odd pair $(\dii,k)$.
A $\bbZ$-grading $(\dW^k)$ on  $\dW$ is even if its dimension sequence 
$\dw=(\dw_k)$  is even. 
Let 
$${}^0\P(\dlambda)\subset\P(\dlambda)$$ be the subset of all even sequences.
We define similarly an even $\bbZ$-grading $(\dV^k)$ on $\dV$, its dimension sequence $\dv=(\dv_k)$,
and the subset of even sequences
\begin{align}\label{0Palpha}{}^0\P(\dalpha)\subset\P(\dalpha).\end{align}
Let ${}^0\!e_{\dw,\dmu}$ denote the idempotent in $\widetilde\scrT_\dmu^\dw$ or $\scrT_\dmu^\dw$ such that
\begin{align}\label{idempotent0}
{}^0\!e_{\dw,\dmu}=\sum_{\dv\in{}^0\P(\dalpha)}e_{\dw,\dv}.
\end{align}

\begin{Definition}\label{def:0T}
Let $\dw$ be an even sequence.
\begin{enumerate}[label=$\mathrm{(\alph*)}$,leftmargin=8mm,itemsep=2mm]
\item
The integral $\bbZ$-weighted QH algebra of type $(\bfc,\bfd)$ and weight $(\dw,\dmu)$ is the subalgebra of $\widetilde\scrT_\dmu^\dw$ given by
\begin{align}\label{0tT}
{}^0\widetilde\scrT_\dmu^\dw={}^0\!e_{\dw,\dmu}\cdot\widetilde\scrT_\dmu^\dw\cdot {}^0\!e_{\dw,\dmu}
\end{align}
We abbreviate
$${}^0\widetilde\scrT^\dw=\bigoplus_\dmu{}^0\widetilde\scrT_\dmu^\dw.$$
\item
The integral cyclotomic $\bbZ$-weighted QH algebra is 
\begin{align}\label{0T1}
{}^0\scrT_\dmu^\dw={}^0\!e_{\dw,\dmu}\cdot\scrT_\dmu^\dw\cdot {}^0\!e_{\dw,\dmu}.
\end{align}
We abbreviate
$${}^0\scrT^\dw=\bigoplus_\dmu{}^0\scrT_\dmu^\dw.$$
\item
A $\scrT_\dmu^\dw$-module is said to be odd if it is killed by the idempotent ${}^0\!e_{\dw,\dmu}$. 
The category of nilpotent ${}^0\scrT_\dmu^\dw$-modules is
the quotient of the category of nilpotent $\scrT_\dmu^\dw$-modules by the Serre subcategory generated by all
odd nilpotent modules. 
Let ${}^0\scrT_\dmu^\dw\-\nilp$ denote this category.
The quotient functor is 
$$\scrT_\dmu^\dw\-\nilp\to {}^0\scrT_\dmu^\dw\-\nilp,\quad M\mapsto {}^0\!e_{\dw,\dmu}\cdot M$$ 
where the category $\scrT_\dmu^\dw\-\nilp$ is as in Definition \ref{def:smooth-nil}.
\end{enumerate}
\end{Definition}

The geometric realization of the algebra ${}^0\widetilde\scrT_\dmu^\dw$ is the following.
Let ${}^0\!\Fl_\dalpha\subset\Fl_\dalpha$ be the subset of $\bbZ$-weighted flags  with even
dimension sequence, i.e., we have
$${}^0\!\Fl_\dalpha=\bigsqcup_{\dv\in{}^0\P(\dalpha)}\Fl_\dv.$$
Set
\begin{align}\label{0X}
\begin{split}
{}^0\scrX_\dmu^\dw=\{(F,A,B)\in \scrX_\dmu^\dw\,;\,F\in{}^0\!\Fl_\dalpha\}
,\quad
{}^0\scrX_\dv^\dw=\{(F,A,B)\in \scrX_\dv^\dw\,;\,F\in{}^0\!\Fl_\dalpha\}
\end{split}
\end{align}
Forgetting the flag yields the projective morphism
\begin{align}\label{0XN}
{}^0\scrX_\dv^\dw\to N_\dmu^\dlambda
\end{align}
The integral $\bbZ$-weighted QH algebra ${}^0\widetilde\scrT_\dmu^\dw$ 
 is isomorphic to the convolution algebras of the fiber product
 $${}^0\scrX_\dmu^\dw\times_{N_\dmu^\dlambda} {}^0\scrX_\dmu^\dw$$ 
 in Borel-Moore homology.
Let ${}_\K^0\widetilde\scrT_\dmu^\dw$ denote
the convolution algebra in K-theory.
The quotient functor $\scrT_\dmu^\dw\-\nilp\to {}^0\scrT_\dmu^\dw\-\nilp$ yields a surjective morphism of complexified Grothendieck groups
\begin{align}\label{decat1}
K(\scrT_\dmu^\dw\-\nilp)\to
K({}^0\scrT_\dmu^\dw\-\nilp)
\end{align}
Assume that $W\neq 0$. 
Then Proposition \ref{prop:TT} and \eqref{decat1} yield a surjective morphism 
\begin{align}\label{decat2}
L(\dw)_\dmu^\vee\to
K({}^0\scrT_\dmu^\dw\-\nilp)
\end{align}
where $L(\dw)^\vee$ is the dual of the $\dfrakg$-module $L(\dw)$ as in \eqref{LLW}.
Let $m$ and $l_1<l_2<\dots<l_m$ be the integers such that
\begin{align}\label{l}\{l_1,l_2,\dots,l_m\}=\{l\in\bbZ\,;\,\dw_l\neq 0\}.\end{align}
Let  $\dww=\tau(\dw)$ in $\P(\dlambda,m)$, where $\pi$ is as in \eqref{map-1}.
Hence, we have $L(\dw)\cong L(\dww)$.
We define 
\begin{align*}{}^0\!L(\dww)_\dmu\subset L(\dww)_\dmu
\end{align*}
to be the image of the transpose of the map \eqref{decat2}.
We define
\begin{align}\label{0L}{}^0\!L(\dww)=\bigoplus_\dmu{}^0\!L(\dww)_\dmu.\end{align}
We will also need the tuple
\begin{align}\label{omega}
\begin{split}
\domega&=\big({\dii_1}^{(\dw_{l_1,\dii_1})}\,,\,{\dii_2}^{(\dw_{l_1,\dii_2})}\,,\,\dots,{\dii_n}^{(\dw_{l_1,\dii_n})}\,,\,
{\dii_1}^{(\dw_{l_2,\dii_1})}\,,\,\dots\,,\,{\dii_n}^{(\dw_{l_m,\dii_n})}\big)\in\I(\dlambda)
\end{split}
\end{align}
where all zero multiplicities are omitted.
There is an obvious embedding of representations of $\dfrakg$
\begin{align}\label{womega}L(\dww)\subset L(\domega).\end{align}
Let $\calM$ be the set of Laurent monomials over the set of variables 
$$\{\bfz_{\dii,k}\,;\,\dii\in\dI\,,\,k\in\bbZ\,,\,k\equiv_2p(i)\}$$
We equip the set $\calM$ with the Nakajima crystal structure in \cite{N03}, see also \cite[\S 2.3]{KTWWY19a}.
Let $\calM(\bfz_{\dii,k})$ be the subcrystal generated by the monomial $\bfz_{\dii,k}$.
Let
$\calM(\dw)$ be the product of the monomials in $\calM(\bfz_{\dii,k})$ where
$k$ is any integer and  $\calM(\bfz_{\dii,k})$ has the multiplicity $\dw_{k,\dii}$.

\begin{Proposition}\label{prop:Parity} \hfill
\begin{enumerate}[label=$\mathrm{(\alph*)}$,leftmargin=8mm,itemsep=2mm]

\item
The map \eqref{decat2} identifies the vector space $K({}^0\scrT^\dw\-\nilp)^\vee$ with ${}^0\!L(\dww)$, 
and $K({}^0\scrT_\dmu^\dw\-\nilp)^\vee$ with ${}^0\!L(\dww)_\dmu$. 
\item
${}^0\!L(\dww)$ is a $\dfrakg$-submodule of $L(\dww)$ which is
spanned by a subset of the dual canonical basis.
\item
If $\bfc=ADE$ then the crystal of ${}^0\!L(\dww)$ is isomorphic
to $\calM(\dw)$. 
\end{enumerate}
\end{Proposition}

\begin{proof}
The proof of (a) was sketched in the discussion above. Let us give more details.
Let $\T_\dmu^\dww$ and $\T^\dww$ be the tensor product algebras of type $\dQ$,
see \S\ref{sec:TPA} for details.
We will identify the module category of the algebra ${}^0\scrT^\dw$ with a module category of
a subalgebra of $\T^\dww$.
To do this set $h=|\dalpha|$ and let $\J(\dalpha,m)$ be the set of triples 
$(\dxx,\dvv,\kappa)$ in $\bbZ^h\times\D(\dalpha,m)$ 
such that
\begin{enumerate}[label=$\mathrm{(\alph*)}$,leftmargin=8mm,itemsep=1mm]
\item[(1)]
$\dxx_k=\dxx_{k+1}\Rightarrow \dvv_k\leqslant \dvv_{k+1}$,
\item[(2)]
$\dxx$ is weakly increasing,
\item[(3)]
$\dxx_{\kappa_s}\leqslant l_s<\dxx_{\kappa_s+1}$.
\end{enumerate}
Let ${}^0\J(\dalpha,m)$ be the subset of triples such that
\begin{enumerate}[label=$\mathrm{(\alph*)}$,leftmargin=8mm,itemsep=1mm]
\item[(4)]
$(\dvv_k,\dxx_k)$ is even for all $k$.
\end{enumerate}
By \eqref{map1}, there is a bijection
$\P(\dalpha)\cong\J(\dalpha,m)$ taking $v$ to the triple $(\dxx,\dvv,\kappa)$ such that
\begin{align*}
\dvv&
=(\dots,{\dii_1}^{(\dv_{k,\dii_1})}\,,\,{\dii_2}^{(\dv_{k,\dii_2})}\,,\,\dots,{\dii_n}^{(\dv_{k,\dii_n})}\,,\,{\dii_1}^{(\dv_{k+1,\dii_1})},\dots),\\
\dxx&
=(\dots ,k^{(|\dv_k|)}\,,\,(k+1)^{(|\dv_{k+1}|)},\dots),\\
\kappa_s&=\max\{k\,;\,\dxx_k\leqslant l_s\}
\end{align*}
It restricts to a bijection 
${}^0\P(\dalpha)\cong {}^0\J(\dalpha,m).$
An idempotent $\e_{\dww,\dvv,\kappa}$ in $\T_\dmu^\dww$ is called even
if there is a tuple $\dxx$ such that  $(\dxx,\dvv,\kappa)\in {}^0\J(\dalpha,m)$. 
We define ${}^0\e_{\dww,\dmu}$ to be the sum of all even idempotents.
We consider the algebras
\begin{align}\label{0T2}
{}^0\T_\dmu^\dww={}^0\e_{\dww,\dmu}\cdot\T_\dmu^\dww\cdot{}^0\e_{\dww,\dmu}
,\quad
{}^0\T^\dww=\bigoplus_\dmu{}^0\T_\dmu^\dww.
\end{align}
Proposition \ref{prop:TT}, applied to the quiver $\dQ$, yields an equivalence of categories
\begin{align}\label{0TT}
{}^0\scrT^\dw\-\nilp\cong {}^0\T^\dww\-\nilp
\end{align} 
Let ${}^0\e_\dww=\sum_\dmu{}^0\e_{\dww,\dmu}$
be the sum over all $\dmu$'s. The category ${}^0\T^\dww\-\nilp$ is the quotient of $\T^\dww\-\nilp$
by the Serre subcategory generated by all simple odd nilpotent modules, i.e., 
the simple nilpotent modules killed by the idempotent ${}^0\e_\dww$.
This proves Part (a).

For (b), let us first prove that ${}^0\!L(\dww)$ is a $\dfrakg$-submodule of $L(\dww)$.
To do this, we must check that the functors $\calE_\dii$ and $\calF_\dii$ in \eqref{EF} descend to the category 
${}^0\T^\dww$-nil.
We will use the same notation as in \cite[\S3.2]{KTWWY19b} to which we refer for more details.
For each pair $(\v,\kappa)$ we write
\begin{align}\label{v'k'}
\check\dvv=(\dii,\dvv_1,\dots,\dvv_{|\dalpha|})
,\quad
\check\kappa=(1+\kappa_1,1+\kappa_2,\dots,1+\kappa_m)
\end{align}
Let $\e_\dii$
be the sum of all idempotents $\e_{\dww,\check\dvv,\check\kappa}$ 
as $(\dvv,\kappa)$ runs over the set $\D(\alpha,m)$.
There is an algebra embedding
$$\phi_\dii :\T_{\dmu}^\dww\to\e_\dii\cdot\T_{\dmu-\delta_\dii}^\dww\cdot\e_\dii$$
which takes the idempotent
$\e_{\dww,\dvv,\kappa}$ to the idempotent  $\e_{\dww,\check\dvv,\check\kappa}$.
The functor
\begin{align}\label{EF1}\calF_\dii:\T_\dmu^\dww\-\nilp\to\T_{\dmu-\delta_\dii}^\dww\-\nilp
\,,\,M\mapsto \T_{\dmu-\delta_\dii}^\dww\otimes_{\T_\dmu^\dww} M\end{align}
is the induction along $\phi_\dii$.
The functor $\calF_\dii$ is right adjoint to $\calE_\dii$. It is given by
\begin{align}\label{EF2}\calE_\dii:\T_{\dmu-\delta_\dii}^\dww\-\nilp\to\T_\dmu^\dww\-\nilp
\,,\,M\mapsto\e_\dii\cdot M\end{align}

To prove that $\calE_\dii$ descends it is enough to check that
if the module $M$ is odd then
$\calE_\dii(M)$ is also odd. 
Assume that  $M$ is odd but
$\calE_\dii(M)$ is not.
Since $\calE_\dii(M)$ is not odd there is an even idempotent $\e_{\dww,\dvv,\kappa}$ such that
$\e_{\dww,\dvv,\kappa}\cdot\calE_\dii(M)\neq 0$
and $\e_{\dww,\check\dvv,\check\kappa}\cdot M\neq 0$.
From the relations (1)-(4) we deduce that if $\e_{\dww,\dvv,\kappa}$ is even then $\e_{\dww,\check\dvv,\check\kappa}$ is also even.
This contradicts the fact that $M$ is odd.

To prove that $\calF_\dii$ descends  it is enough to check that if $M$ is odd then
$\calF_\dii(M)$ is also odd. 
By \cite{KL09} the character of induced modules of QH algebras is given by shuffling the characters of the original
modules. We deduce that if $\e_{\dww,\dvv',\kappa'}\cdot \calF_\dii (M)\neq 0$ then there is an idempotent 
$\e_{\dww,\dvv,\kappa}\in\T_\dmu^\dww$ such that
$\e_{\dww,\dvv,\kappa}\cdot M\neq 0$ and the tuple $\dvv'$ in $\I(\dalpha+\delta_\dii)$ is a shuffle of $\delta_\dii$ and
$\dvv$.
Then, using the relations (1)-(4) above, it is not difficult to see that if $\e_{\dww,\dvv',\kappa'}$ is even then $\e_{\dww,\dvv,\kappa}$ is also even.
Since the module $M$ is odd and $\e_{\dww,\dvv,\kappa}\cdot M\neq 0$, we deduce that the idempotent $\e_{\dww,\dvv,\kappa}$ is odd.
Hence $\e_{\dww,\dvv',\kappa'}$ is also odd.
This implies that the module $ \calF_\dii(M)$ is odd.
The compatibility with canonical bases follows from Corollary \ref{cor:DCB}.

To prove Part (c) of the proposition we assume that $\bfc=ADE$. Then,  if $\dxx_k=\dxx_{k+1}$
there are no arrows between $\dvv_k$ and $\dvv_{k+1}$ because the quiver $\dQ$ is bipartite, hence switching
$\dvv_k$ and $\dvv_{k+1}$ does not change the idempotent $\e_{\dww,\dvv,\kappa}$.
We deduce that an idempotent $\e_{\dww,\dvv,\kappa}\in\T^\dww$ is even if and only if there is a tuple of integers $\dxx$
for which the triple $(\dxx,\dvv,\kappa)$ satisfies the following conditions
\begin{enumerate}[label=$\mathrm{(\alph*)}$,leftmargin=8mm,itemsep=1mm]
\item[(2)]
$\dxx$ is weakly increasing,
\item[(3)]
$\dxx_{\kappa_s}\leqslant l_s<\dxx_{\kappa_s+1}$ for each $s=1,\dots,m$.
\item[(4)]
$\dxx_k$ and $\dvv_k$ have the same parity for all $k$.
\end{enumerate}
Hence, the proposition follows from \cite[def.~3.21-3.23, cor.~3.27]{KTWWY19b},
because $\T^\dww$ is isomorphic to the algebra $T^\dww$ in \cite[\S 3.1]{KTWWY19b} by Remark \ref{rem:presentation}.
Note that \cite{KTWWY19b} uses the algebra $\T^\domega$ instead of $\T^\dww$, and that the convention there
for the condition (3) is different from ours, see Remark \ref{rem:xvk}.
\end{proof}

For each integer $d$
we will say that the sequence $\dw$ is $d$-generic if we have $\dww=\domega$ and if $l_{s+1}-l_s\gg d$ for each $s$.
The following is an immediate consequence of the definitions.

\begin{Lemma}\hfill
\begin{enumerate}[label=$\mathrm{(\alph*)}$,leftmargin=8mm,itemsep=2mm]
\item
$L(\dlambda)\subset{}^0\!L(\dww)$.
\item
If $m=1$ then ${}^0\!L(\dlambda)=L(\dlambda)$.
\item
If $\dw$ is $|\alpha|$-generic then ${}^0\!L(\dww)_\dmu=L(\dww)_\dmu$.
\end{enumerate}
\qed
\end{Lemma}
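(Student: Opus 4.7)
The plan is to use that ${}^0\!L(\dww)$ is a $\dfrakg$-submodule of $L(\dww)$ by Proposition~\ref{prop:Parity}(a), and to analyze the top weight space and the generic case directly. For (a), at $\dalpha=0$ (so $\dmu=\dlambda$) the variety $\calN_\dlambda^\dlambda$ is a point and ${}^0\scrT_\dlambda^\dw=\bbC$, so $K({}^0\scrT_\dlambda^\dw\-\nilp)^\vee$ fills the one-dimensional weight space $L(\dww)_\dlambda$, which is spanned by the tensor product of highest weight vectors $v_{\dww_1}\otimes\cdots\otimes v_{\dww_m}$. This vector generates a copy of $L(\dlambda)$, and since ${}^0\!L(\dww)$ is a $\dfrakg$-submodule containing it, we get $L(\dlambda)\subset{}^0\!L(\dww)$. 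Part (b) is then immediate: when $m=1$ we have $\dww=(\dlambda)$ and $L(\dww)=L(\dlambda)$, so (a) forces equality.

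For (c), I would first pass through the equivalence ${}^0\scrT^\dw\-\nilp\cong{}^0\T^\dww\-\nilp$ established in the proof of Proposition~\ref{prop:Parity}. The equality ${}^0\!L(\dww)_\dmu=L(\dww)_\dmu$ is equivalent to the statement that no simple nilpotent $\T_\dmu^\dww$-module is annihilated by the even idempotent ${}^0\e_{\dww,\dmu}$. For this it is enough to prove that under the $|\alpha|$-generic hypothesis every nonzero idempotent $\e_{\dww,\dvv,\kappa}$ in the cyclotomic algebra $\T_\dmu^\dww$ (i.e., with $\kappa_m=h=|\dalpha|$) is even, for then ${}^0\e_{\dww,\dmu}$ is the identity after quotienting by the cyclotomic ideal.

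Given such a pair $(\dvv,\kappa)\in\D(\dalpha,m)$, writing $\dvv_k=\delta_{(i_k,r_k)}$, I would construct a tuple $\dxx\in\bbZ^h$ satisfying conditions (1)--(4) from the proof of Proposition~\ref{prop:Parity} as follows. Monotonicity together with (3) forces $\dxx_k\in(l_{s-1},l_s]$ whenever $\kappa_{s-1}<k\leq\kappa_s$, with the conventions $\kappa_0=0$ and $l_0=-\infty$. The generic hypothesis $l_s-l_{s-1}\gg|\alpha|\geq h$ furnishes enough room inside each such interval to pick $\kappa_s-\kappa_{s-1}$ strictly increasing integers with any prescribed residues modulo $2d_{i_k}$, so (2), (3), (4) can be realized simultaneously; taking $\dxx$ strictly increasing also makes (1) vacuous. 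The only step with any content is this elementary combinatorial count inside the intervals $(l_{s-1},l_s]$, so all three parts flow almost formally once Proposition~\ref{prop:Parity} is in hand.
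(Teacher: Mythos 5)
Your proof is correct; the paper itself gives no argument (it states the lemma as ``an immediate consequence of the definitions''), and your proposal is exactly the natural unpacking of those definitions: the top weight space computation plus Proposition \ref{prop:Parity}(a) for (a)--(b), and the observation that under the genericity hypothesis every idempotent $\e_{\dww,\dvv,\kappa}$ with $\kappa_m=h$ admits a strictly increasing even tuple $\dxx$ for (c). The only step with real content is the interval-counting argument in (c), and you have it right.
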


\begin{Remark}\label{rem:xvk}\hfill
\begin{enumerate}[label=$\mathrm{(\alph*)}$,leftmargin=8mm,itemsep=2mm]
\item
If $\bfc=ADE$ then $\dQ\cong Q$, $\dI\cong I$ and  $(\dw_k)$ is even 
if and only $\dw_{k,\dii}=0$ for each $k\not\equiv_{2}p(\dii)$.
\item
We could replace the ${}^0\!I$-grading in the definition of the integral QH algebra by a grading by the 
larger set $I^\bullet$.
In Proposition \ref{prop:Parity} the quiver $\dQ$ would be replaced by  a non connected quiver with 2 isomorphic connected 
components. We will see later that the ${}^0\!I$-grading is the condition required for the integral
category $\scrO$ of the (truncated) shifted quantum loop group considered in \S\ref{sec:O} and \S\ref{sec:HCO}.
\item
Taking $\domega$ instead of $\dww$ we get the algebras $\T_\dmu^\domega$ and $\T^\domega$.
Set $m'=|\dlambda|$.
For all $\dvv\in\I(\dalpha)$ there is an inclusion 
$$\K(\dalpha,m)\to\K(\dalpha,m')
,\quad
\kappa=(\kappa_1,\dots,\kappa_m)\mapsto \kappa'=(\kappa_1^{(|\dww_1|)},\cdots,\kappa_m^{(|\dww_m|)})$$
By Remark \ref{rem:w-omega}, under the isomorphism \eqref{decat0}
the restriction from $\T^\domega$ to $\T^\dww$ is the transpose of the embedding \eqref{womega}.
\item
The integral $\bbZ$-weighted QH algebra ${}^0\widetilde\scrT^\dw$ and its cyclotomic quotient ${}^0\scrT^\dw$ 
associated with the Cartan datum $(\bfc,\bfd)$ still make sense without the coprime condition $c=1$.
Theorem \ref{thm:Main1} below remains valid in the non coprime case.
\end{enumerate}
\end{Remark}

\section{Coulomb branches of 4D, N=2 quiver gauge theories with symmetrizers}

Let $Q=(I,E)$ be the quiver defined in \S\ref{sec:NSCD}.
Fix $I$-graded vector spaces $V$ and $W$.
Let $\alpha$, $\lambda$, $\mu$ be as in \eqref{lam1}.

\subsection{BFN spaces with symmetrizers}

\subsubsection{Definition of the BFN spaces}\label{sec:BFN1}
For each vertex $i\in I$  we abbreviate 
$V_{i,\calO}=V_i\llb z_i\rrb$, $V_{i,K}=V_i\llp z_i\rrp$
and
\begin{align*}
\GL(V_i)_\calO&=\GL(V_i)(\bbC\llb z_i\rrb)
,\quad
G_\calO=\prod_{i\in I}\GL(V_i)_\calO,\\
\GL(V_i)_K&=\GL(V_i)(\bbC\llp z_i\rrp)
,\quad
G_K=\prod_{i\in I}\GL(V_i)_K.
\end{align*}
The affine Grassmannian of $\GL(V_i)$ is the ind-scheme 
$\Gr_{\GL(V_i)}$ representing the functor
$\GL(V_i)_K/\GL(V_i)_\calO$.
Set $\Gr=\prod_i\Gr_{\GL(V_i)}$ and
\begin{align}\label{GNOK}
\begin{split}
\Hom_I(W_\calO,V_\calO)&=\bigoplus_{i\in I}\Hom_{\bbC\llb z_i\rrb}(W_{i,\calO},V_{i,\calO}),\\
\End_E(V_\calO)&=\bigoplus_{i\to j}\Hom_{\bbC\llb z\rrb}(V_{i,\calO},V_{j,\calO}),\\
 N_\calO&=\Hom_I(W_\calO,V_\calO)\oplus\End_E(V_\calO),\\[2mm]
 N_K&=\Hom_I(W_K,V_K)\oplus\End_E(V_K).
\end{split}
\end{align}
Here, for any arrow $i\to j$ in $E$ we write 
$$z=(z_i)^{-c_{ij}}=(z_j)^{-c_{ji}}.$$
We equip $ N_\calO$ and $ N_K$ with their obvious classical scheme and ind-scheme structure.
To avoid confusions we may write
$ N_\calO^\lambda= N_\calO$
or
$ N_K^\lambda= N_K$.
The $G\times G_W$-action on $\calN$ in \S\ref{sec:Quivers} equips $ N_\calO$ and $ N_K$
with obvious actions of the groups $G_\calO\times G_W$ and $G_K\times G_W$.
Let $\calT$ be the classical $G_K\times G_W$-equivariant ind-scheme representing the twisted product
\begin{align}\label{T}\calT=G_K\times_{G_\calO} N_\calO.\end{align}
The first projection yields a $G_K$-equivariant morphism of ind-schemes
\begin{align}\label{p1}
p:\calT\to\Gr.
\end{align}
The $G_K\times G_W$-action on $ N_K$ and the inclusion 
$ N_\calO\subset  N_K$ yield a $G_K\times G_W$-equivariant morphism of ind-schemes
\begin{align}\label{pi}\pi:\calT\to  N_K.\end{align}
Following \cite{CWb}, the derived BFN space with symmetrizers is the 
fiber product in the category of dg ind-schemes
\begin{align}\label{R1}
\begin{split}
\xymatrix{\frakR\ar[r]^-{i_1}\ar[d]_-{i_2}&\Gr\times  N_\calO\ar[d]^-{j_1}\\
\calT\ar[r]^-{j_2}&\Gr\times  N_K}
\end{split}
\end{align}
where $j_1$ is the obvious inclusion and $j_2=p\times\pi$.
The $\infty$-stack $\frakR$ is ind-tamely presented and all maps in \eqref{R1} are almost ind-finitely presented ind-closed 
immersions by \cite[prop.~4.7]{CWb}.
Note that loc.~cit.~only deals with a symmetric Cartan matrix $\bfc$.
The classical truncation of $\frakR$ is the BFN space in \cite{NW23}.
It is the classical fiber product
$$\calR=\calT\times_{ N_K} N_\calO.$$ 
The $G_K\times G_W$-action on $\calT$ restricts to a $G_\calO\times G_W$-action on $\calR$ and $\frakR$.
There is an obvious closed embedding
\begin{align}\label{cl}\cl:\calR\to\frakR\end{align}
The group $\bbG_m$ acts on $\bbC\llp z_i\rrp$ such that $\tau\in\bbC^\times$ acts as follows
\begin{align}\label{loop1} \tau\cdot f(z_i)=f(z_i\tau^{d_i})\end{align}
We will use the convention in \cite[rem.~5.7]{NW23} for the loop rotation action on $ N_\calO$.
More precisely, choose $\sigma_i\in\frac12\bbZ$ for each $i\in I$ such that for all $i\to j$ we have
\begin{align}\label{sigma}\frac12\,b_{ij}=\sigma_i-\sigma_j-d_i+d_j.\end{align}
We modify the $\bbG_m$-action on $V_{i,K}$ and $W_{i,K}$
such that $\tau$ acts on 
$V_i\otimes z_i^n$ and $W_i\otimes z_i^n$ through 
\begin{align}\label{loop2}\tau^{\sigma_i+nd_i}\id.\end{align}
Taking the product over all $i\in I$, we get actions of $\bbG_m$  on  the group ind-scheme $G_K$ 
and the ind-schemes $\Gr$,  $ N_K$, etc.
We consider the groups 
\begin{align}\label{tGOK}
\begin{split}
&\widetilde G=G\times\bbG_m
,\quad
\tG_\calO=G_\calO\rtimes\bbG_m
,\quad
\tG_K=G_K\rtimes\bbG_m
,\quad
\widetilde T=T\times\bbG_m,\quad
\widetilde T_\calO=T_\calO\rtimes\bbG_m
,\\
&
\dot G=\widetilde G\times T_W,\quad
\dot G_\calO=\widetilde G_\calO\times T_W,\quad
\dot G_K=\widetilde G_K\times T_W,\quad
\dot T=\widetilde T\times T_W,\quad
\dot T_\calO=\widetilde T_\calO\times T_W.
\end{split}
\end{align}
The group scheme $\dot G_\calO$ acts on the scheme $ N_\calO$ and the ind-schemes $\calR$ and $\frakR$.
The group ind-scheme $\dot G_K$ acts on the ind-schemes  $ N_K$ and $\calT$ so that the
map $\pi$ in \eqref{pi} is $\dot G_K$-equivariant.
The group ind-scheme $\widetilde G_K$ acts on the ind-scheme $\Gr$.
We have an obvious isomorphism of stacks $$\calT/\dot G_K\cong N_\calO/\dot G_\calO.$$
By \cite[prop.~4.11]{CWb}, the stacks $ N_\calO/\dot G_\calO$ and $\frakR/\dot G_\calO$ are ind-tamely presented ind-geometric stacks,
see the terminology in \S\ref{sec:CAT}.
This allows us to use the formalism in loc.~cit. for coherent sheaves.
Further, by \cite[prop.~4.8]{CWb} there is
a Cartesian square of ind-geometric stacks
\begin{align}\label{R2}
\begin{split}
\xymatrix{\frakR/\dot G_\calO\ar[r]^-{\pi_1}\ar[d]_-{\pi_2}& N_\calO/\dot G_\calO\ar[d]\\
 N_\calO/\dot G_\calO\ar[r]& N_K/\dot G_K}
\end{split}
\end{align}
Here the map $\pi_1$ is the composition of $i_1$ with the second projection, 
and the map $\pi_2$ is the composition of $i_2$ with the quotient map 
$$\calT/\dot G_\calO\to\calT/\dot G_K= N_\calO/\dot G_\calO.$$
Further, the  maps $\pi_1$ 
and $\pi_2$ are ind-proper and almost ind-finitely presented.

\begin{Remark}\label{rem:H1} 
In the definition of $\Gr$, $\calT$, $\frakR$ or $\calR$ we may allow the group $G$ to vary.
Given a Levi subgroup $H\subset G$ with $H=\prod_{i\in I}H_i$ and $H_i \subset \GL(V_i)$, we define as above
\begin{align}\label{RH}
\Gr_H=H_K/H_\calO
,\quad
\calT_H=H_K\times_{H_\calO} N_\calO
,\quad
\frakR_H=\calT_H\times_{ N_K} N_\calO.
\end{align}
The obvious inclusion $\Gr_H\subset\Gr_G=\Gr$ yields ind-finitely presented closed immersions
\begin{align}\label{iota}
\calT_H\to\calT
,\quad
\frakR_H\to\frakR
,\quad
\calT_H/\dot H_\calO\to\calT/\dot H_\calO
,\quad
\frakR_H/\dot H_\calO\to\frakR/\dot H_\calO
\end{align}
Let $\iota$ denote any of these maps.
\end{Remark}

\subsubsection{Approximations of $\frakR$}
We will need various approximations of the ind-geometric stack $\frakR$. 
Fix $\tG_\O$-invariant presentations as ind-schemes
\begin{align}\label{colim}
\Gr = \colim_n \Gr_n
,\quad
\calT = \colim_n \calT_n
,\quad
\frakR =\colim_n \frakR_n
\end{align}
such that
$$\calT_n= \Gr_n\times_\Gr\calT
,\quad
\frakR_n = \Gr_n\times_\Gr\frakR.$$
The map $\frakR_n\to\frakR$ is an almost ind-finitely presented ind-closed immersion, 
since it is a base change of the immersion $\Gr_n\to\Gr$.
 Each $\calT_n$ is a classical scheme, because the map $\calT \to \Gr$ is flat.
 Thus, the map $\calT_n \to  N_K $ factors through the subscheme $t^{-k_n}  N_\calO\subset N_K$ for some $k_n$.
We deduce that 
$$\frakR_n = \colim_{k \geqslant k_n} \frakR_n^k,$$ where
the derived scheme $\frakR_n^k$ is defined by the 
Cartesian square below
\begin{align}\label{R3}
\begin{split}
\xymatrix{\frakR_n^k\ar[r]^-{i_1}\ar[d]_-{i_2}&\Gr_n\times  N_\calO\ar[d]^-{j_1}\\
\calT_n\ar[r]^-{j_2}&\Gr_n\times t^{-k} N_\calO}
\end{split}
\end{align}
Note that, although $\frakR_n$ is only an ind-scheme, the following are schemes
$$\calR_n=(\frakR_n)^\cl=(\frakR_n^k)^\cl$$ 
We can further write each $\frakR_n^k$ as a limit of finite type schemes. To do this we consider the quotient
$\calT_n^\ell \to \Gr_n$ of the bundle $\calT_n\to\Gr_n$ 
whose fiber over the point $g\cdot G_\calO$ in $\Gr_n$ is $ N_\calO/(g^{-1}t^\ell  N_\calO).$
We define $\frakR_n^{k,\ell}$ by the Cartesian diagram 
\begin{align}\label{R4}
\begin{split}
\xymatrix{\frakR_n^{k,\ell}\ar[r]^-{i_1}\ar[d]_-{i_2}&\Gr_n\times  N_\calO/t^\ell  N_\calO\ar[d]^-{j_1}\\
\calT_n^\ell\ar[r]^-{j_2}&\Gr_n\times (t^{-k} N_\calO/t^\ell  N_\calO)}
\end{split}
\end{align}
Then, we have the following limits along faithfully flat affine maps
\begin{align}\label{lim}
\calT_n=\lim_\ell\calT_n^\ell
,\quad
\frakR_n^k=\lim_\ell\frakR_n^{k,\ell}.
\end{align}

\subsubsection{The affine cell decomposition of the BFN space}
\label{sec:BFN2}
A cocharacter $\gamma\in\Lambda_V$  is an $I$-tuple 
of cocharacters $\gamma_i$ of the tori $T_{V_i}$.
It is dominant if each $\gamma_i$ is dominant.
Let $z^\gamma\in G_K$ be  the point given by
\begin{align}\label{zgamma}
z^\gamma=(z_i^{\gamma_i})
\end{align}
For each point $g\in G_K$ let $[g]=g\cdot G_\calO$ be the corresponding point in $\Gr$.
We abbreviate $[\gamma]=[z^\gamma].$
Let $\Gr_{\leqslant\gamma}$ be the closure in $\Gr$ of the $G_\calO$-orbit 
\begin{align}\label{Grg}
\Gr_\gamma=G_\calO\cdot [\gamma].
\end{align}
We say that the cocharacter $\gamma$ is minuscule if the orbit $\Gr_\gamma$ is closed in $\Gr$.
For every ind-scheme $X$ there is a unique ind-scheme $X_\red$ together with a monomorphism
$X_\red\subset X$ such that for any reduced affine scheme $S$ we have
$X_\red(S)=X(S)$. The injection $\Gr_\red\subset\Gr$ is  a closed immersion.
We have
\begin{align}\label{Grred}\Gr_\red=\bigsqcup_{\gamma\in\Lambda^+}\Gr_\gamma.\end{align}
For each dominant cocharacter $\gamma$ the map $p:\calR\to\Gr$ allows us to define the following schemes
\begin{align}\label{Rgamma}
\calR_{\gamma}=\Gr_{\gamma}\times_{\Gr}\calR
,\quad
\calR_{\leqslant\gamma}=\Gr_{\leqslant\gamma}\times_{\Gr}\calR
\end{align}
We define $\calT_{\gamma}$ and $\calT_{\leqslant\gamma}$ similarly.
From \eqref{Grred} we deduce that
\begin{align}\label{RTred}
\calR_\red=\bigsqcup_{\gamma\in\Lambda^+}\calR_\gamma
,\quad
\calT_\red=\bigsqcup_{\gamma\in\Lambda^+}\calT_\gamma.
\end{align}
The obvious inclusion $\Gr_\gamma\subset\Gr_{\leqslant\gamma}$ is an open immersion.
By base change it yields an open immersion
$\calR_\gamma\subset\calR_{\leqslant\gamma}$.
The map $p$ in \eqref{p1} restricts to a $G_\calO$-equivariant vector bundle
\begin{align}\label{p2}
p:\calR_\gamma\to\Gr_\gamma
\end{align}
whose fiber at $[\gamma]$ is the vector space
$ N_\calO\cap z^{-\gamma} N_\calO$ given by 
$$ N_\calO\cap z^{-\gamma} N_\calO= N_\calO\cap\Ad\!\big(z^\gamma\big)^{-1}( N_\calO).$$
The cocharacter lattice of the torus $T$ is 
$$X_*(T)=\bigoplus_{i\in I}\bigoplus_{r=1}^{a_i}\bbZ w_{i,r},$$
where the $w_{i,r}$'s are the basic cocharacters.
The fundamental coweights of  $G$ are
 \begin{align}\label{fundamental1}
 \omega_{i,r}=w_{i,1}+\cdots+w_{i,r}
 ,\quad
 \omega_{i,-r}=-w_0 (\omega_{i,r}).
 \end{align}
 where $i\in I$ and $r=1,\dots,a_i$.
 Set also $\omega_{i,0}=0$.
 Let $ \stackrel{r}{\subset}$ be the inclusion of a codimension $r$ subspace.
We have
\begin{align*}
\Gr_{\omega_{i,r}} &= \{L=(L_j)\in\Gr\,;\,L_j=V_{j,\calO}\,,\,z_iV_{i,\calO} \subset L_i \stackrel{r}{\subset} V_{i,\calO}\,,\,j\neq i\},\\
\Gr_{\omega_{i,-r}}&= \{ L=(L_j)\in\Gr\,;\,L_j=V_{j,\calO}\,,\,V_{i,\calO} \stackrel{r}{\subset} L_i\subset z_i^{-1}V_{i,\calO}\,,\,j\neq i\}.
\end{align*}
Similarly, we have the following sets of pairs 
\begin{align}\label{RT}
\begin{split}
\calT_{\omega_{i,\pm r}}&=\{(L,A,B)\,;\,L\in\Gr_{\omega_{i,\pm r}} \,,\,A(W_{\calO})\subset L\,,\,B(L)\subset L\},\\
\calR_{\omega_{i,r}} &= \{(L,A,B)\in\calT_{\omega_{i,r}}\,;\,B_{ji}(V_{i,\calO})\subset V_{j,\calO}\},\\
\calR_{\omega_{i,-r}} &= \{(L,A,B)\in\calT_{\omega_{i,-r}}\,;\, A_i(W_{i,\calO})\subset V_{i,\calO}\,,\,B_{ij}(V_{j,\calO})\subset V_{i,\calO}\},
\end{split}
\end{align}
where $L=(L_j)$ is a tuple of lattices, $A=(A_j)$ with $A_j$ a $\bbC\llb z_j\rrb$-linear map, and $B=(B_{ji})$ with $B_{j,i}$ a $\bbC\llb z\rrb$-linear map.
For any dimension vector $\nu = \sum_i r_i\delta_i$ with $-a_i \leqslant r_i \leqslant a_i$, set
$$\omega_\nu=\sum_{i\in I} \omega_{i,r_i}.$$
Hence, we have
\begin{align}\label{RGr}
\Gr_{\omega_\nu}=\prod_{i\in I}\Gr_{\omega_{i,r_i}}
,\quad
\calR_{\omega_\nu}=p^{-1}(\Gr_{\omega_\nu})
\end{align}
Note that the orbit $\Gr_{\omega_\nu}$ is closed.

\subsubsection{Fixed point locus in the affine Grassmannian}\label{sec:FP1}
For each pairs of coweights 
\begin{align}\label{gamma+rho}
\gamma\in\Lambda_\alpha\otimes\bbQ
,\quad
\rho\in\Lambda_\lambda\otimes\bbQ
\end{align} 
we view $\gamma$ and $\rho$ as $I$-tuples of coweights
$\gamma=(\gamma_i)$ and $\rho=(\rho_i)$
with $\gamma_i$ a coweight of $T_{V_i}$ and $\rho_i$ a coweight of $T_{W_i}$.
Thus $\gamma$ and $\rho$ are identified with tuples of rational numbers
$(\gamma_{i,r})$ and $(\rho_{i,s})$ in the obvious way.
Recall the set ${}^0\!I$ introduced in \eqref{0I}.
We define
\begin{align}\label{evenrhogamma}
\begin{split}
{}^0\!\Lambda_\alpha&=\{\gamma\in\Lambda_\alpha\otimes\bbQ\,;\,(i,2d_i\gamma_{i,r})\in{}^0\!I\,,\,\forall i,r\},\\
{}^0\!\Lambda_\lambda&=\{\rho\in\Lambda_\lambda\otimes\bbQ\,;\,(i,2d_i\rho_{i,s})\in{}^0\!I\,,\,\forall i,s\}.
\end{split}
\end{align}
We say that a coweight in ${}^0\!\Lambda_\alpha$ or ${}^0\!\Lambda_\lambda$ is even.
From now on we assume that
\begin{align}\label{0gamma+rho}
\gamma\in{}^0\!\Lambda_\alpha
,\quad
\rho\in{}^0\!\Lambda_\lambda
\end{align} 
In particular, this implies that
$(2d_i\gamma_i)$ and $(2d_i\rho_i)$ are cocharacters in $\Lambda_\alpha$ and $\Lambda_\lambda$.
Let the set $\dI$ be as in \S\ref{sec:IQHA}.
For each vertex $\dii=(i,r)$ in $\dI$ and each $k\in\bbZ$ we define
\begin{align}\label{VW'}
\begin{split}
&\dV_{\gamma,\dii}=\Ker(\varepsilon^{2d_i\gamma_i}-\varepsilon^r)
,\quad
\dW_{\rho,\dii}=\Ker(\varepsilon^{2d_i\rho_i}-\varepsilon^r)\end{split}
\end{align}
and
\begin{align}\label{dbfVW}
\begin{split}
&\dV^k_{\gamma,\dii}=\Ker(\tau^{2d_i\gamma_i}-\tau^k)
,\quad
\dW^k_{\rho,\dii}=\Ker(\tau^{2d_i\rho_i}-\tau^k)
\end{split}
\end{align}
where $\varepsilon=\exp(i\pi/d_i)$ and $\tau\in\bbC^\times$ is generic. 
Taking the sum over all vertices we get the $\dI$-graded vector spaces $\dV_\gamma$, $\dW_\rho$
and the sequences of $\dI$-graded vector subspaces $(\dV_\gamma^k)$, $(\dW_\rho^k)$.
We also consider the flag of $\dI$-graded vector spaces $(\dV_\gamma^{\leqslant k})$ such that
$\dV_\gamma^{\leqslant k}=\sum_{l\leqslant k}\dV_\gamma^l.$
If the coweights $\gamma$, $\rho$ are clear from the context we abbreviate 
\begin{align}\label{nogamma}
\dV=\dV_\gamma
,\quad
\dW=\dW_\rho
\quad
\dV^k=\dV_\gamma^k
,\quad
\dW^k=\dW_\rho^k
,\quad
\dV^{\leqslant k}=\dV_\gamma^{\leqslant k}.
\end{align}
Following \eqref{dlam1}, we denote the dimension vectors 
of the $\dI$-graded vectors space $\dV$, $\dW$ by
\begin{align}\label{dlam2}
\dalpha,\dlambda,\dmu\in\bbN\dI
\end{align} 
We will say that $\dalpha$, $\dlambda$, $\dmu$ refine $\alpha$, $\lambda$, $\mu$.
Let $\dG\subset G$ be the Levi subgroup  given by
\begin{align}\label{G'}
\dG=\prod_{\dii\in\dI}\GL(\dV_\dii)
\end{align}
We may view the torus $T$ as a maximal torus either in $G$ or in $\dG$.
So any coweight $\eta$ of $T$ decomposes as tuples of coweights
$(\eta_i)$ and $(\eta_\dii)$ of the tori $T_{V_i}$ and  $T_{\dV_\dii}$ where $i\in I$ and $\dii\in\dI$.
We set
\begin{align}\label{Lambda'}
\begin{split}
{}^0\!\Lambda_\dV
&=\{\eta\in{}^0\!\Lambda_\alpha\,;\,\dV_\eta=\dV\}.
\end{split}
\end{align}
Hence
${}^0\!\Lambda_\dV$ is the set of coweights 
$\eta\in{}^0\!\Lambda_\alpha$ such that $\eta_\dii\in \frac{r}{2d_i}+\Lambda_\alpha$ for all vertex $\dii=(i,r)$ in $\dI$.
Using this identification, for each coweight $\eta$  we define a cocharacter  $\hat\eta$ such that
\begin{align}\label{hat}
\hat\eta_\dii=\eta_\dii-\frac{r}{2d_i}.
\end{align} 
We say that $\eta$ is $\dG$-dominant if $\hat\eta_\dii$ is a dominant cocharacter of $\GL(\dV_\dii)$ for each vertex $\dii\in\dI$.
Let ${}^0\!\Lambda_\dV^+$ be the set of all $\dG$-dominant even coweights in ${}^0\!\Lambda_\dV$.
For each $\tau\in\bbC^\times$ and $i\in I$ we set
\begin{align}\label{zetagamma}
\begin{split}
\tau^{2d_i\gamma_i}&=\text{diag}(\tau^{2d_i\gamma_{i,1}},\tau^{2d_i\gamma_{i,2}},\dots,\tau^{2d_i\gamma_{i,a_i}}),\\
\tau^{2d_i\rho_i}&=\text{diag}(\tau^{2d_i\rho_{i,1}},\tau^{2d_i\rho_{i,2}},\dots,\tau^{2d_i\rho_{i,l_i}}).
\end{split}
\end{align}
Taking the product over all vertices in $I$, we define
\begin{align}\label{tau-gamma}
\tau^{2\gamma}=(\tau^{2d_i\gamma_i})\in T,\quad
\tau^{2\tilde\gamma}=(\tau^{2\gamma}\,,\,\tau^2)\in\widetilde T
,\quad
\tau^{2\rho}=(\tau^{2d_i\rho_i})\in T_W
,\quad
\tau^{2\dot\gamma}=(\tau^{2\tilde\gamma}\,,\,\tau^{2\rho})\in\dot T
\end{align}
Following \eqref{tau-gamma}, let $\varepsilon^{2\gamma}$ denote the $I$-tuple of matrices $(\varepsilon^{2d_i\gamma_i})$ in $G$.
By \eqref{VW'}, the group $\dG$ is the centralizer of $\varepsilon^{2\gamma}$ in $G$.
Let $\dP_\eta\subset\dG$ be the parabolic subgroup  such that
\begin{align}\label{P}\dP_\eta=\{g=(g_\dii)\in \dG\,;\,
 \lim_{\tau\to 0}\Ad(\tau^{-\hat\eta_\dii})(g_\dii) \text{\ exists\ in\ }\dG\ \text{for\ all}\ \dii\}
\end{align}
We consider the semidirect products  
\begin{align}\label{GP}
\widetilde\dG=\dG\rtimes\bbG_m
,\quad 
\widetilde\dP_\eta=\dP_\eta\rtimes\bbG_m
\end{align}
such that $\tau^2\in\bbG_m$
acts by conjugation by $\tau^{-2\gamma}\in\dG$.
There is an obvious group isomorphism 
\begin{align}\label{GPISOM}\widetilde\dG\to\dG\times\bbG_m
,\quad (g\zeta^{2\gamma},\zeta^2)\mapsto(g,\zeta^2)\end{align}
However, using the semidirect presentation is useful. Let 
\begin{align}\label{fpdg1}
\Gr^{\tilde\gamma}\subset\Gr
\end{align}
be the fixed point set of $\tilde\gamma$ in $\Gr$.
We define similarly the fixed point subgroups 
\begin{align}\label{GFP1}(\widetilde G_K)^{\tilde\gamma}\subset \widetilde G_K
,\quad
(G_K)^{\tilde\gamma}\subset  G_K
,\quad
(\widetilde G_\calO)^{\tilde\gamma}\subset\widetilde G_\calO
,\quad
(G_\calO)^{\tilde\gamma}\subset  G_\calO.\end{align}
For each  $\eta\in{}^0\!\Lambda_\dV$ we define the subset $\Gr^{\tilde\gamma,\eta}\subset \Gr^{\tilde\gamma}$ to be
\begin{align}\label{Greta}\Gr^{\tilde\gamma,\eta}=(G_K)^{\tilde\gamma}\cdot[\hat\eta-\hat\gamma]\end{align}
with its reduced scheme structure.

\begin{Lemma}[\cite{NW23}]\label{lem:GP}
Let $\gamma,\eta\in{}^0\!\Lambda_\dV$.
Let $\dv\in{}^0\P(\dalpha)$ be the dimension sequence of  $(\dV_\eta^k)$.
\hfill
\begin{enumerate}[label=$\mathrm{(\alph*)}$,leftmargin=8mm]
\item
The subgroup $\dP_\eta$ of $\dG$ is the stabilizer of the $\bbZ$-weighted flag $(\dV_\eta^{\leqslant k})$ in $\Fl_{\dv}$.
We have $\Fl_{\dv}\cong\dG/\dP_\eta$ as $\dG$-varieties.
\item
There are group isomorphisms
\begin{align}\label{FPG}
(\widetilde G_K)^{\tilde\gamma}\cong \widetilde\dG
,\quad
(G_K)^{\tilde\gamma}\cong\dG
, \quad
(\widetilde G_\calO)^{\tilde\gamma}\cong \widetilde\dP_\gamma
,\quad
(G_\calO)^{\tilde\gamma}\cong \dP_\gamma.
\end{align}
\item
The isomorphism \eqref{FPG}
identifies  the stabilizers in $(\widetilde G_K)^{\tilde\gamma}$ and $(G_K)^{\tilde\gamma}$
of the point $[\hat\eta-\hat\gamma]$ with the groups $\widetilde\dP_\eta$ and $\dP_\eta$,
yielding an isomorphism $\Gr^{\tilde\gamma,\eta}\cong\dG\,/\,\dP_\eta$ which intertwines the 
$(\widetilde G_K)^{\tilde\gamma}$-action with the $\widetilde\dG$-action.
\item
We have  $\Gr_\red^{\widetilde\gamma}=\bigsqcup_{\eta\in{}^0\!\Lambda_\dV^+}\Gr^{\tilde\gamma,\eta}$.
\end{enumerate}
\end{Lemma}

\begin{proof}
Note that, since $\gamma,\eta\in{}^0\!\Lambda_\dV$, we have $\dV_\gamma=\dV_\eta=\dV$. Part (a) is obvious.
To prove (b) we  observe that
\begin{align}\label{Ggamma}
(G_K)^{\tilde\gamma}&=\{g=(g_i(z_i\rrp\in G_K\,;\,
g_i(z_i)=\Ad(\tau^{2d_i\gamma_i})(g_i(z_i\tau^{2d_i}\rrp\,,\,\forall i\in I\,,\,\forall\tau\in\bbC^\times\}
 \end{align}
Fix $\tau\in\bbC^\times$ generic and $\varepsilon=\exp(i\pi/d_i)$.
For each $g\in (G_K)^{\tilde\gamma}$ we have 
\begin{align}\label{ggamma}
g_i(1)=\Ad(\varepsilon^{-2d_i\gamma_i})(g_i(1\rrp
,\quad
g_i(\tau)=\Ad(\tau^{-\hat\gamma_i})(g_i(1\rrp.
\end{align}
Since $\dG$ is the centralizer of $\varepsilon^{2\gamma}$ in $G$,
we deduce that the element $(g_i(1\rrp$  belongs to $\dG$.
Thus the assignment 
$g=(g_i(z_i\rrp\mapsto(g_i(1\rrp$ is a group homomorphism $(G_K)^{\tilde\gamma}\to\dG$.
It is easily seen to be invertible.
Let $\ev$ denote this isomorphism.
Since $\widetilde G_K=G_K\rtimes\bbG_m$, we have $(\widetilde G_K)^{\tilde\gamma}=(G_K)^{\tilde\gamma}\rtimes\bbG_m$.
Hence the isomorphism $ev:(G_K)^{\tilde\gamma}\to\dG$ lifts to a group isomorphism 
$\ev:(\widetilde G_K)^{\tilde\gamma}\to \dG\rtimes\bbG_m$
such that $\ev(g,\tau)=( \ev(g),\tau)$.
To identify the cocycle of $\dG\rtimes\bbG_m$, recall that
$\tau\cdot g=(g_i(\tau^{d_i}z_i\rrp$ by \eqref{loop1}, and that \eqref{tau-gamma} and \eqref{ggamma} yield
$$\tau^2\cdot\ev(g)=(g_i(\tau^{2d_i}\rrp=(\Ad(\tau^{-2d_i\hat\gamma_i})(g_i(1\rrp)=(\Ad(\tau^{-2d_i\hat\gamma_i})(g_i(1\rrp)
=\Ad(\tau^{-2\gamma})\ev(g).$$
We deduce that  $\dG\rtimes\bbG_m$ is the semidirect product $\widetilde\dG$ considered in \eqref{GP}.

To prove (c), note that for each $g\in (G_K)^{\tilde\gamma}$ we have
$$g\cdot[\hat\eta-\hat\gamma]=[\hat\eta-\hat\gamma]
\iff
\Ad\llp z_i)^{\hat\gamma_i-\hat\eta_i})(g_i(z_i\rrp\in\GL(V_i)_\calO
,\quad
\forall i\in I$$
Further, from \eqref{ggamma} we deduce that
$\Ad\llp z_i)^{\hat\gamma_i})( g_i(z_i\rrp=g_i(1)$.
Thus, we have
\begin{align*}
g\cdot[\hat\eta-\hat\gamma]=[\hat\eta-\hat\gamma]
&\iff \Ad(z_i^{-\hat\eta_i})(g_i(1\rrp\in GL(V_i)_\calO,\quad\forall i\in I\\
&\iff \ev(g)\in\dP_\eta
\end{align*}
where  the last equivalence is \eqref{P}.
For (d) we first observe that the fixed point locus $ \Gr^{\tilde\gamma}$ is given by
\begin{align*}
 \Gr^{\tilde\gamma}&
 =\{[g]\in\Gr\,;\,
 \tau^{2\widetilde\gamma}\cdot [g]=[g]\,,\,\forall\tau\in\bbC^\times\}.
 \end{align*}
In particular, if $[g]\in \Gr^{\tilde\gamma}$ then
$\varepsilon^{2\gamma}\cdot[g]=[g]$ where $\varepsilon$ is as above.
For any parabolic subgroup $\dP\subset G$ with Levi factor $\dG$
there is a morphism of ind-schemes $\dP_K/\dP_O\to\Gr$ which is a bijection on $\bbC$-points. 
This implies  that any element $[g]$ in $\Gr^{\tilde\gamma}$ belongs to the image of $\dG_K/\dG_O$ in $\Gr$. 
Thus, we may assume that $g\in\dG_K$ and that $(\tau^{\hat\gamma},\tau)\cdot [g]=[g]$ for each $\tau\in\bbC^\times$.
Then, the image in  $\Gr$
of the element $\phi\in\dG_K$ such that $\phi=z^{\hat\gamma}\cdot g$
is fixed by the loop $\bbC^\times$-action, i.e., we have $\tau\cdot[\phi]=[\phi]$ for all $\tau\in\bbC^\times$.
This implies that $[\phi]=[\Ad(h)(z^{\hat\eta})]$ for some elements $h\in\dG$ and $\hat\eta\in\Lambda^+$, 
where $z^{\hat\gamma}$ is as in \eqref{zgamma}.
We deduce that
$$[g]=z^{-\hat\gamma}\cdot h\cdot[\hat\eta]=\Ad(z^{-\hat\gamma})(h)\cdot[\hat\eta-\hat\gamma]$$
and it is easy to see that $\Ad(z^{-\hat\gamma})(h)\in (G_K)^{\tilde\gamma}$.
Therefore, we have
$\Gr_\red^{\widetilde\gamma}=\bigsqcup_\eta\Gr^{\tilde\gamma,\eta}$.
\end{proof}

\subsubsection{Fixed point locus of the BFN space}\label{sec:FP2}
Now, we consider the fixed point sets of $\dot\gamma$ given by
\begin{align}\label{fpdg}
( N_K)^{\dot\gamma}\subset N_K
,\quad
\calT^{\dot\gamma}\subset\calT
,\quad
\calR^{\dot\gamma}\subset\calR
\end{align}
By Lemma \ref{lem:GP}, the fixed point sets of the action
of $\dot\gamma$ on  $\calT_\red$ and $\calR_\red$ decompose in the following way
\begin{align}\label{TT}
\begin{split}
\calT_\red^{\dot\gamma}=\bigsqcup_{\eta\in{}^0\!\Lambda_\dV^+}\calT^{\dot\gamma,\eta}
,\quad
\calT^{\dot\gamma,\eta}\cong\Gr^{\tilde\gamma,\eta}\times_\Gr\calT^{\dot\gamma},\\
\calR_\red^{\dot\gamma}=\bigsqcup_{\eta\in{}^0\!\Lambda_\dV^+}\calR^{\dot\gamma,\eta}
,\quad
\calR^{\dot\gamma,\eta}\cong\Gr^{\tilde\gamma,\eta}\times_\Gr\calR^{\dot\gamma}
\end{split}
\end{align}
Our next goal is to describe $\calT^{\dot\gamma}$ and $\calR^{\dot\gamma}$.
Before to do this, recall that ${}^0\P(\dalpha)$ is the set of even sequences  of dimension vectors in $\bbN\dI$ with sum  $\dalpha$, 
see \eqref{0Palpha}.
Let $\dV=\dV_\gamma$ and $\dW=\dW_\rho$ as in \eqref{nogamma}, and
fix $\dalpha,\dlambda,\dmu\in\bbN\dI$ as in \eqref{dlam2}.
If it does not create any confusion, we abbreviate 
\begin{align}\label{0Lalpha}{}^0\!\Lambda_\dalpha={}^0\!\Lambda_\dV
,\quad
{}^0\!\Lambda_\dalpha^+={}^0\!\Lambda_\dV^+\end{align}
There is a bijection 
\begin{align}\label{bijection}
{}^0\!\Lambda_\dalpha^+\cong{}^0\P(\dalpha)
,\quad
\gamma\mapsto\dv_\gamma
\end{align} 
such that $\dv_\gamma$ is the sequence $(\dv_{\gamma,k})$
where $\dv_{\gamma,k}$ is the dimension vector of the $\dI$-graded vector space
$\dV_\gamma^k$ in \eqref{dbfVW}.
Similarly, let 
\begin{align}\label{bijection-rho}
\dw_\rho\in {}^0\P(\dlambda)
\end{align}
be the sequence $\dw_\rho=(\dw_{\rho,k})$ where $\dw_{\rho,k}$ is the dimension vector of the $\dI$-graded vector space
$\dW_\rho^k$ in \eqref{dbfVW}.
Let $\dQ$ be the quiver considered in \S\ref{sec:NSCD}, and 
$N_\dmu^\dlambda$ be the representation space in \eqref{dN}.
We abbreviate
\begin{align}\label{Xrhogamma}
\scrX_\gamma^\rho=\scrX_{\dv_\gamma}^{\dw_\rho},
\end{align}
where the right hand side is as in \eqref{XV}.
The map \eqref{XN1} yields a projective morphism $\scrX_\gamma^\rho\to N_\dmu^\dlambda$.
If $\bfc\neq G_2$, then for each coweight $\eta\in{}^0\!\Lambda_\alpha$ we consider 
the $\dP_\eta$-subvariety $ N_\eta^\rho\subset N_\dmu^\dlambda$ given by
\begin{align}\label{dNeta}
 N_\eta^\rho=\{(A,B)\in N_\dmu^\dlambda\,;\,A(\dW_\rho^k)\subset\dV_\eta^{\leqslant k}\,,\,
B(\dV_\eta^{\leqslant k})\subset\dV_\eta^{< k}\}
\end{align}
If $\bfc=G_2$, we define $ N_\eta^\rho$ similarly, modulo a grading shift given in the proof below.
Given an affine group $G$, a closed subgroup $H$ and a finite dimensional representation $N$ of $H$, the induction from
$H$ to $G$ associates to any finite dimensional representation $V$ of $H$ a $G$-equivariant vector bundle $\ind(V)$ on the variety 
$G\times_HN$ such that
$$\ind(V)=\big(G\times_H(V\otimes\scrO_N)\big).$$
Taking the Grothendieck groups, we get a linear map
\begin{align}\label{induction}\ind:R_H\to K^G(G\times_HN).\end{align}
Taking $G=\dG$, $H=\dP_\eta$, $N= N_\eta^\rho$ and composing with the restriction, this yields a map
\begin{align}\label{inductionb}
\ind:R_{\dP_\eta}\to K^{\dP_\gamma}(\dG\times_{\dP_\eta} N_\eta^\rho).
\end{align}
Similarly, the induction yields a map
\begin{align}\label{inductiona}
\ind:R_{\widetilde G_\calO}\to K^{\widetilde G_\calO}(\calT)
\end{align}
hence, composing with the restriction, a map
\begin{align}\label{inductiona1}
\ind:R_{G_\calO}\to K^{(G_\calO)^{\tilde\gamma}}(\calT^{\dot\gamma,\eta})
\end{align}
We need the following result, which strengthens the computations in \cite[App.~B]{NW23} (where $W=0$).

\begin{Proposition}\label{prop:fixedlocus}
Let $\gamma,\eta\in{}^0\!\Lambda_\dalpha^+$. 
\hfill
\begin{enumerate}[label=$\mathrm{(\alph*)}$,leftmargin=8mm]
\item
There is an isomorphism of $\dG\ltimes\bbG_m$-varieties
$\scrX_\eta^\rho\cong\dG\times_{\dP_\eta} N_\eta^\rho$.
\item
There is an isomorphism of varieties
$\calT^{\dot\gamma,\eta}\cong\scrX_\eta^\rho$
which intertwines the $(\widetilde G_K)^{\tilde\gamma}$-action on $\calT^{\dot\gamma,\eta}$ and the $\widetilde\dG$-action 
on $\scrX_\eta^\rho$. 
For each $V\in R_{G_\calO}$, this isomorphism identifies the class $\ind(V)\in K^{(G_\calO)^{\tilde\gamma}}(\calT^{\dot\gamma,\eta})$ 
with the class $\ind(V_\eta)\in K^{\dP_\gamma}(\scrX_\eta^\rho)$, where $V_\eta\in R_{\dP_\eta}$ is the restriction
of $V$ to the group $(G_\calO)^{\tilde\eta}=\dP_\eta$ under the isomorphism \eqref{FPG}.
\item
There is a commutative diagram 
$$\xymatrix{
\calR^{\dot\gamma,\eta}\ar[r]^-{i_2}\ar[d]_-{\pi_1}&\calT^{\dot\gamma,\eta}\ar@{=}[r]\ar[d]_-\pi&\scrX_\eta^\rho\ar[d]\\
 N_\gamma^\rho\ar[r]&( N_K)^{\dot\gamma}\ar@{=}[r]&N_\dmu^\dlambda}$$
The right square is equivariant respectively to the group isomorphism $(\widetilde G_K)^{\tilde\gamma}\cong\widetilde\dG$,
and the left square  respectively to the group isomorphism  $(\widetilde G_\calO)^{\tilde\gamma}\cong\widetilde\dP_\gamma$.
\end{enumerate}
\end{Proposition}

\begin{proof}
We extend the $\dG$-action on $\scrX_\eta^\rho$ and $\dG\times N_\eta^\rho$
to an action of the semidirect product $\widetilde\dG$ in \eqref{Ggamma}
such that $\tau^2\in\bbG_m$ acts by the diagonal action of the element $\tau^{-2\gamma}\in\dG$.
Part (a) follows from the definitions \eqref{X} and \eqref{XV}.
Let us concentrate on (b).
Recall that $\dV=\dV_\gamma$ and $\dW=\dW_\rho$.
We first concentrate on the right isomorphism.
To prove it, we  consider a tuple $A=(A_i)$ in $\Hom_I(W_K,V_K)$, and we set
\begin{align}\label{A}
A_i=\sum_{n\in\bbZ}A_i^{(n)}\otimes z_i^n
,\quad
\ev(A_i)=\sum_{n\in\bbZ}A_i^{(n)}
,\quad
A_i^{(n)}\in \Hom(W_i,V_i).
\end{align}
Hence $A$ is fixed by the cocharacter $\dot\gamma$ if and only if
for each $i\in I$ and $k,n\in\bbZ$ we have
\begin{align}\label{An}
A_i^{(n)}(\dW^k)\subset  \dV^{k-2d_in}.
\end{align}
This yields a map which is easily seen to be an isomorphism using \eqref{dbfVW}
\begin{align}\label{evA}
\ev:\Hom_I(W_K,V_K)^{\dot\gamma}\to\Hom_{\dI}(\dW,\dV)
,\quad
A_i\mapsto\ev(A_i)
\end{align}
Next, fix $B=(B_{ji})$ in $\End_E(V_K)$.
Note that
\begin{align}\label{EndV}
\End_E(V_K)=\bigoplus_{\substack{i\to j\\c_{ij}=-1}}\Hom(V_i,V_j)\llp z_j\rrp
\oplus\bigoplus_{\substack{i\to j\\c_{ij}<-1}}\Hom(V_i,V_j)\llp z_i\rrp
\end{align}
We write 
$\ev(B_{ji})=\sum_nB_{ji}^{(n)}$ where
$B_{ji}\in \Hom(V_i,V_j)$ is given by
\begin{align}\label{B}
B_{ji}=\begin{cases}
\sum_{n\in\bbZ}B_{ji}^{(n)}\otimes z_j^n&\text{if}\ c_{ij}=-1\\
\sum_{n\in\bbZ}B_{ji}^{(n)}\otimes z_i^n&\text{if}\  c_{ij}<-1
\end{cases}
\end{align}  
By \eqref{sigma} and \eqref{loop2}, the tuple $B$ is fixed by $\dot\gamma$ if and only if 
for each $i, j\in I$ and $k,n\in\bbZ$ we have 
\begin{align}\label{Bn}
\begin{cases}
B_{ji}^{(n)}( \dV^k)\subset  \dV^{k-2(n+1)d_j+d_i}&\text{if}\ c_{ij}=-1,\\[2mm]
B_{ji}^{(n)}( \dV^k)\subset  \dV^{k-2(n-1)d_i-3d_j}&\text{if}\ c_{ij}<-1.
\end{cases}
\end{align}
Thus,  we have the isomorphism
\begin{align}\label{evB}
\ev:\End_E(V_K)^{\dot\gamma}\to\End_{\dE}(\dV)
,\quad
B_{ji}\mapsto\ev(B_{ji})
\end{align}
From \eqref{N}, \eqref{evA} and \eqref{evB} we deduce that the map $ev$ yields an isomorphism
\begin{align}\label{Ngamma}(N_K)^{\dot\gamma}\cong N_\dmu^\dlambda.\end{align}
Now, we consider the upper right isomorphism in (b). Recall that
$$\calT=\{([g],A,B)\in\Gr\times  N_K\,;\,(A,B)\in g\cdot N_\calO\}.$$
By Lemma \ref{lem:GP} and \eqref{Ngamma}  we have an isomorphism
$$\ev:(\Gr\times N_K)^{\dot\gamma}\to\bigsqcup_{\eta\in{}^0\!\Lambda_\dV^+}\Gr^{\tilde\gamma,\eta}\times N_\dmu^\dlambda$$
Fix a tuple $([\hat\eta-\hat\gamma],A,B)$ in the right hand side.
We claim that 
\begin{align}\label{etagammaNO}
\ev\big\llp z^{\hat\eta-\hat\gamma}\cdot N_\calO)\cap( N_K)^{\dot\gamma}\big)= N_\eta^\rho.
\end{align}
This yields an isomorphism
\begin{align*}
\calT^{\dot\gamma,\eta}&\cong\dG\times_{\dP_\eta} N_\eta^\rho\cong\scrX_\eta^\rho
\end{align*}
from which part (b) follows. 
Let us concentrate on (c).
Setting $\eta=\gamma$, the map $ev$ yields an isomorphism
$$( N_\calO)^{\dot\gamma}\cong N_\gamma^\rho.$$
Hence the map $\pi_1:\calR\to N_\calO$ factorizes through a map
$$\pi_1:\calR^{\dot\gamma,\eta}\to N_\gamma^\rho$$

Now, we concentrate on the proof of \eqref{etagammaNO}.
Fix $(A,B)$ in $( N_K)^{\dot\gamma}$.
First, let $A$, $A_i^{(n)}$ and $\ev(A_i)$ be as in \eqref{A}.
Then
$z^{\hat\gamma-\hat\eta}\cdot  A$ is the sum over all integers $l$ of the operators
$$A_i^{(n)}\otimes z_i^{n+\hat\gamma_i-\hat\eta_i}:
\dW^k\to \Big( \dV^{k-2d_in}\cap\dV_\eta^l\Big)\otimes z_i^{\frac{1}{2d_i}(k-l)}$$
We have
\begin{align}\label{NO1}
\Hom_I(W_\calO,V_\calO)=\bigoplus_{i\in I}\Hom(W_i,V_i)\llb z_i\rrb.
\end{align}
We deduce that
\begin{align}\label{AA}
z^{\hat\gamma-\hat\eta}\cdot A\in  N_\calO\iff\ev(A)(\dW^k)\subset\dV_\eta^{\leqslant k}
,\quad
\forall k\in\bbZ.\end{align}
Next, let $B$, $B_{ji}^{(n)}$ and $\ev(B_{ji})$ be as in \eqref{B}.
Then $z^{\hat\gamma-\hat\eta}B$ is the sum over all integers $l$, $l'$ of the operators 
\begin{align*}
B_{ji}^{(n)}\otimes z_j^{n+\hat\gamma_j-\hat\eta_j-\frac{d_i}{d_j}(\hat\gamma_i-\hat\eta_i)}:
 \dV^k\cap\dV_\eta^l\to\Big( \dV^{k-2(n+1)d_j+d_i}\cap\dV_\eta^{l'}\Big)\otimes (z_j)^{-1+\frac{1}{2d_j}(l-l'+d_i)}
\ \text{if}\  c_{ij}=-1,\\
B_{ji}^{(n)}\otimes z_i^{n+\frac{d_j}{d_i}(\hat\gamma_j-\hat\eta_j)-\hat\gamma_i+\hat\eta_i}:
 \dV^k\cap\dV_\eta^{l}\to\Big( \dV^{k-2(n-1)d_i-3d_j}\cap\dV_\eta^{l'}\Big)\otimes (z_i)^{1+\frac{1}{2d_i}(l-l'-3d_j)}
\ \text{if}\  c_{ij}<-1,
\end{align*} 
Since
\begin{align}\label{NO2}
\End_E(V_\calO)=\bigoplus_{\substack{i\to j\\c_{ij}=-1}}\Hom(V_i,V_j)\llb z_j\rrb
\oplus\bigoplus_{\substack{i\to j\\c_{ij}<-1}}\Hom(V_i,V_j)\llb z_i\rrb z_i^{1+c_{ij}},
\end{align}
we deduce that $z^{\hat\gamma-\hat\eta}\cdot B\in  N_\calO$ if and only if the expression above vanishes for 
\hfill
\begin{enumerate}[label=$\mathrm{(\alph*)}$,leftmargin=8mm,itemsep=1mm]
\item[$\bullet$]
$l'\notin l+d_i-2d_j-2d_j\bbN$ if $c_{ij}=-1$,
\item[$\bullet$]
$l'\notin l-d_j-2d_i\bbN$ if $c_{ij}<-1$.
\end{enumerate}
First, assume that $\bfc\neq G_2$.
A case-by-case checking using the definition of the quiver $\dQ$ in \S\ref{sec:NSCD} yields
\begin{align}\label{BB1}
z^{\hat\gamma-\hat\eta}\cdot B\in  N_\calO\iff\ev(B)(\dV_\eta^{\leqslant l})\subset\dV_\eta^{< l}
,\quad
\forall l\in\bbZ
\end{align}
Let us check the case $\bfc=B_n$, the other ones are similar.
We set $I=\{1,\dots,n-1,n\}$ with $\bfd=(d_1,\dots,d_{n-1},d_n)=(2,\dots,2,1)$.
If $i\neq n$ then $d_i=d_j=2$ and $c_{ij}=-1$, hence
\begin{align*}
z^{\hat\gamma-\hat\eta}\cdot B_{ji}\in  N_\calO
&\iff\ev(B_{ji})(V_i\cap\dV_\eta^{l})\subset V_j\cap\dV_\eta^{l-2-4\bbN},\quad\forall l\in\bbZ\\
&\iff\ev(B_{ji})(V_i\cap\dV_\eta^{\leqslant l})\subset V_j\cap\dV_\eta^{<l}
,\quad
\forall l\in\bbZ
\end{align*}
Else, we have $i=n$, $j=n-1$, thus $d_i=1$, $d_j=2$, $c_{ij}<-1$, hence
\begin{align*}
z^{\hat\gamma-\hat\eta}\cdot B_{ji}\in  N_\calO
&\iff\ev(B_{ji})(V_i\cap\dV_\eta^{l})\subset V_j\cap\dV_\eta^{l-2-2\bbN},\quad\forall l\in\bbZ\\
&\iff\ev(B_{ji})(V_i\cap\dV_\eta^{\leqslant k})\subset V_j\cap\dV_\eta^{<l}
,\quad
\forall l\in\bbZ
\end{align*}
If $\bfc=G_2$ we get instead
\begin{align}\label{BB2}
z^{\hat\gamma-\hat\eta}\cdot B\in  N_\calO\iff\ev(B)(\dV_\eta^{\leqslant l})\subset\dV_\eta^{<l-1}
,\quad
\forall l\in\bbZ
\end{align}
In this case we have $I=\{1,2\}$ with $\bfd=(d_1,d_2)=(3,1)$.
We shift the $\bbZ$-gradings $(\dV_\eta^l)$, $(\dW^l)$ such that 
$\dV_{\eta,\dii}^l$, $\dW_\dii^l$ are replaced by $\dV_{\eta,\dii}^{l+2}$, $\dW_\dii^{l+2}$ 
if $\dii$ is the vertex $(2,1)\in\dI$ and are unchanged otherwise.
Then, the condition \eqref{AA} is unchanged and 
\eqref{BB2} is changed to \eqref{BB1}.
The relation \eqref{etagammaNO} follows from \eqref{dNeta}, \eqref{AA} and \eqref{BB1}.
Finally, the left square in (b) follows from \eqref{R1} and the computations above.
\end{proof}

\subsection{Coulomb branches with symmetrizers}\label{sec:CB}
\subsubsection{The monoidal product on the BFN space}\label{sec:product}
Replacing $\GL(V_i)$ by the 2-fold cover $\bfG\bfL(V_i)$ 
obtained by adding a square root of the determinant, and $\bbG_m$ by its 2-fold cover $\bGm$
we define the groups
$\bfG$, $\bfG_\calO$ and $\bfG_K$ as in \eqref{G} and \eqref{GNOK}, and the groups
$\tbG$, $\tbG_\calO$, $\tbG_K$, $\dot\bfG$, $\dot\bfG_\calO$ and $\dot\bfG_K$ as in \eqref{tGOK}.
We define similarly the tori $\bfT$,  $\tbT$ and $\dbT$.
Composing the $\dot G_\calO$-action on $\calR$, $\frakR$ and the $\dot G_K$-action on $\calT$ in
\S\ref{sec:BFN1} with the coverings $\dbG_\calO\to\dot G_\calO$ and $\dbG_K\to\dot G_K$, we get 
actions of $\dbG_\calO$ and $\dbG_K$.
We have an obvious isomorphism of stacks 
$$\calT/\dbG_K\cong N_\calO/\dbG_\calO.$$
The $\infty$-stack $ N_\calO/\dbG_\calO$ and $\frakR/\dbG_\calO$ are ind-tamely presented ind-geometric stack,
see \S\ref{sec:BFN1}.
Thus the stable $\infty$-category $\Coh^{\dot\bfG_\calO}(\frakR)$ is well-defined.
We refer to Appendix \ref{sec:CAT} for the terminology.
By \cite[prop.~5.7]{CWb} there is a monoidal product 
$$
\Coh^{\dbG_\calO}(\frakR)\times\Coh^{\dbG_\calO}(\frakR)\to\Coh^{\dbG_\calO}(\frakR)
,\quad
(\calF,\calG)\mapsto\calF\star\calG.
$$ 
Let us briefly recall the definition of this product.
There is an obvious map
\begin{align}\label{delta}\delta:\frakR/\dbG_\calO\times_{ N_\calO/\dbG_\calO}\frakR/\dbG_\calO\to
\frakR/\dbG_\calO\times\frakR/\dbG_\calO.\end{align}
The map $\delta$ has coherent pullback.
By \eqref{R2} there is an isomorphism of ind-geometric stacks
\begin{align}\label{R}
\frakR/\dbG_\calO\cong  N_\calO/\dbG_\calO\times_{ N_K/\dbG_K} N_\calO/\dbG_\calO.
\end{align}
Hence, the projection to the first and third factors along the second one yields a map
\begin{align}\label{m}
\begin{split}
m:\frakR/\dbG_\calO\times_{ N_\calO/\dbG_\calO}\frakR/\dbG_\calO
&=\, N_\calO/\dbG_\calO\times_{ N_K/\dbG_K} N_\calO/\dbG_\calO
\times_{ N_K/\dbG_K} N_\calO/\dbG_\calO\\
&\to N_\calO/\dbG_\calO\times_{ N_K/\tbG_K} N_\calO/\dbG_\calO\\
&=\,\frakR/\dbG_\calO
\end{split}
\end{align}
By \cite[\S 5.1]{CWb}, the map $m$ is ind-proper and almost ind-finitely presented.
Hence, the pushforward by $m$ preserves coherence by \S\ref{sec:B.k}.
The monoidal product $\star$ in $\Coh^{\dot\bfG_\calO}(\frakR)$ is given by
\begin{align}\label{*}
\star=m_*\circ \delta^*.
\end{align}
Let $\calR_0=\calR_\gamma$ in
\eqref{Rgamma} with $\gamma=0$.
By \cite[prop.~5.7]{CWb},
the monoidal unit is the object 
\begin{align}\label{unit}{\bf1}=\cl_*(\scrO_{\calR_0/\dbG_\calO}).
\end{align}

\subsubsection{BFN spaces for Levi subgroups}\label{sec:H2}
Our definition of $\star$ is taken from \cite{CWb}.
It is the same as the product in \cite{BFNa} by \cite[prop.~5.13]{CWb}.
For a future use, let us briefly recall a few properties of $\star$ following \cite{BFNa}.
Let $H\subset G$ be a Levi subgroup. 
As in \eqref{R2}, there are isomorphisms of ind-geometric stacks
\begin{align*}
\frakR_H/\dbH_\calO&\cong  N_\calO/\dbH_\calO\times_{ N_K/\dbH_K} N_\calO/\dbH_\calO
\\
\frakR/\dbH_\calO&\cong  N_\calO/\dbH_\calO\times_{ N_K/\dbG_K} N_\calO/\dbG_\calO
\end{align*}
They yield the following diagrams
\begin{gather*}
\xymatrix{
\frakR_H/\dbH_\calO\times\frakR_H/\dbH_\calO
&\ar[l]
\frakR_H/\dbH_\calO\times_{ N_\calO/\dbH_\calO}\frakR_H/\dbH_\calO
\ar[r]&
\frakR_H/\dbH_\calO}\\
\xymatrix{
\frakR/\dbH_\calO\times\frakR/\tbG_\calO
&\ar[l]
\frakR/\dbH_\calO\times_{ N_\calO/\dbG_\calO}\frakR/\dbG_\calO
\ar[r]&
\frakR/\dbH_\calO}\\
\xymatrix{
\frakR_H/\dbH_\calO\times\frakR/\dbH_\calO
&\ar[l]
\frakR_H/\dbH_\calO\times_{ N_\calO/\dbH_\calO}\frakR/\dbH_\calO
\ar[r]&
\frakR/\dbH_\calO
}
\end{gather*}
We define as in \eqref{*} the monoidal products 
\begin{align*}
&\star:\Coh^{\dbH_\calO}(\frakR_H)\times\Coh^{\dbH_\calO}(\frakR_H)\to\Coh^{\dbH_\calO}(\frakR_H),\\
&\star:\Coh^{\dbG_\calO}(\frakR)\times\Coh^{\dbG_\calO}(\frakR)\to\Coh^{\dbG_\calO}(\frakR)
\end{align*} 
and the functor 
\begin{align*}
\star:\Coh^{\widetilde\bfH_\calO}(\frakR_H)\otimes 
\Coh^{\dbH_\calO}(\frakR)\otimes 
\Coh^{\dbG_\calO}(\frakR)\to
\Coh^{\dbH_\calO}(\frakR)
\end{align*}
They satisfy the obvious associativity axioms.
Let $\bf1$ denote the objects
$${\bf1}=\cl_*(\scrO_{\calR_{H,0}/\dbH_\calO}),\quad
\bf1=\cl_*(\scrO_{\calR_0/\dbH_\calO}),\quad
{\bf1}=\cl_*(\scrO_{\calR_0/\dbG_\calO})$$
in
$\Coh^{\dbH_\calO}(\frakR_H)$, $\Coh^{\dbH_\calO}(\frakR)$ and 
$\Coh^{\dbG_\calO}(\frakR)$.
Taking the K-theory yields a map
\begin{align*}
\star:K^{\dbH_\calO}(\calR_H)\otimes 
K^{\dbH_\calO}(\calR)\otimes 
K^{\dbG_\calO}(\calR)\to
K^{\dbH_\calO}(\calR)
\end{align*}
The pushforward by the map $\iota$ in \eqref{iota} coincides with the map
\begin{align}\label{iota*}
\iota_*:K^{\dbH_\calO}(\calR_H)\to K^{\dbH_\calO}(\calR)
,\quad
c\mapsto c\star\bf1
\end{align}
The Weyl group $\W$ of $G$ acts by algebra automorphisms of the left hand side.
The map $\iota_*$ restricts to an algebra homomorphism
$$K^{\dbH_\calO}(\calR_H)^{\W}\to K^{\dbH_\calO}(\calR)^{\W}=K^{\dbG_\calO}(\calR)$$
The  map $\iota_2$ in \eqref{R1} and the zero section of the bundle $\calT\to\Gr$ yield the diagram
$$\xymatrix{\frakR/\dbG_\calO\ar[r]^-{\iota_2}&\calT/\dbG_\calO&\ar[l]_-{\sigma_2}\Gr/\dbG_\calO}.$$ 
We equip the complexified Grothendieck group $K^{\dbG_\calO}(\Gr)$ with an algebra structure as above.
The map 
$$z^*=\sigma_2^*\circ (i_2)_*:K^{\dbG_\calO}(\calR)\to K^{\dbG_\calO}(\Gr),$$
is an algebra homomorphism, see \cite[\S 5.3]{CWb}, or \cite[lem.~5.11]{BFNa} for a proof in homology.
Setting $G=H$, we get in the same way an algebra homomorphism
\begin{align}\label{z*}
z^*:K^{\dbH_\calO}(\calR_H)\to K^{\dbH_\calO}(\Gr_H).
\end{align}
All results above remains true if we replace everywhere the groups $\dbH_\calO$, $\dbG_\calO$ by 
$\tbH_\calO$, $\tbG_\calO$.

\subsubsection{The algebra $\calA_{\mu,R}^\lambda$}
Let $K^{\dot\bfG_\calO}(\frakR)$ and $K^{\dot\bfG_\calO}(\calR)$ be the complexified 
Grothendieck groups of the stable $\infty$-categories $\Coh^{\dot\bfG_\calO}(\frakR)$ and
$\Coh^{\dot\bfG_\calO}(\calR)$.
We will identify the groups 
$K^{\dot\bfG_\calO}(\calR)\cong K^{\dot\bfG_\calO}(\frakR)$
by the pushforward by the classical embedding cl in \eqref{cl}.

\begin{Definition}\label{def:AR}
Set $\calA_{\mu,R}^\lambda=K^{\dot\bfG_\calO}(\calR)$.
We equip  $\calA_{\mu,R}^\lambda$ with the associative $R_{\bGm\times T_W}$-algebra structure given by the monoidal product $\star$.
\end{Definition}

There is an $R_{\bGm\times T_W}$-algebra isomorphism
\begin{align*}
\calA_{\mu,R}^\lambda
=K^{\dot G_\calO}(\calR)\otimes_{R_{\dot G_\calO}}R_{\dot\bfG_\calO}
=K^{\dot G_\calO}(\calR)[\bfzeta,\det(V_i)^{\frac12}]
\end{align*}
where
\begin{align}\label{zeta}R_{\bbG_m}=R[q,q^{-1}],\quad R=R_{\bGm}=R[\bfzeta,\bfzeta^{-1}],\quad \bfzeta=q^{\frac12}\end{align}
In the following we will omit the 2-fold covers $\tbG$, $\tbT$ and $\bGm$ to simplify the notation.
Let 
$$\otimes:R_{\dot G}\otimes\calA_{\mu,R}^\lambda\to\calA_{\mu,R}^\lambda$$
denote the $R_{\dot G}$-action given by the tensor product of 
$\dot G_\calO$-equivariant coherent sheaves with representations of the group $\dot G_\calO$.
The monoidal product is $R_{\bbG_m\times T_W}$-linear. It is not $R_G$-linear.
A finite dimensional representation $V$ of $G$ extends trivially to a representation of $G_\calO$.
The induction \eqref{inductiona} yields map
\begin{align}\label{inductiona2}\ind:R_{\dot G}\to \calA_{\mu,R}^\lambda.\end{align}
We fix $\zeta\in\bbC^\times$ which is not a root of unity.
Let $(-)|_\zeta$ be the base change along the homomorphism 
\begin{align}\label{bczeta}
(-)|_\zeta:R\to\bbC,\  \bfzeta\mapsto\zeta.
\end{align}
The specialization at the element $\zeta^{2\rho}$ in \eqref{zetagamma}
yields an algebra homomorphism
\begin{align}\label{bcrho}(-)|_\rho:R_{T_W}\to\bbC.\end{align}
By base change along the maps \eqref{bczeta} and \eqref{bcrho}, we get the following algebras
\begin{align}\label{zetarho}
\calA_\mu^\lambda=\calA_{\mu,R}^\lambda\big|_\zeta
,\quad
\calA_{\mu,R}^\rho=\calA_{\mu,R}^\lambda|_\rho
,\quad
\calA_\mu^\rho=\calA_\mu^\lambda\big|_\rho.
\end{align}
We will need the following basic facts, which are similar to results in \cite{BFNa}, \cite{BFNb}.
We abbreviate
\begin{align}\label{AR}
\begin{split}
\calA_{\gamma,R}^\lambda=K^{\dot\bfG_\calO}(\calR_\gamma),\quad
\calA_{\leqslant\!\gamma,R}^\lambda=K^{\dot\bfG_\calO}(\calR_{\leqslant\gamma})
\end{split}
\end{align}

\begin{Proposition}\label{prop:basic}
Let $f \in R_{\dot G}$ and $x\in\calA_{\mu,R}^\lambda$.
\hfill
\begin{enumerate}[label=$\mathrm{(\alph*)}$,leftmargin=8mm,itemsep=1mm]
\item
$\calA_{\mu,R}^\lambda$ is free as an $R_{\bbG_m\times T_W}$-module.
\item
$\calA_{\mu,R}^\lambda|_{\zeta=1}$ is a commutative algebra.
\item
$\calA_{\mu,R}^\lambda=\bigcup_\gamma \calA_{\leqslant\!\gamma,R}^\lambda$
 is a filtered algebra.
\item
The monoidal product on $\calA_{\mu,R}^\lambda$ is $R_{\dot G}$-linear
in the first variable. We have
$(f\otimes{\bf1})\star x=f\otimes x$.
\item
$x\star (f\otimes{\bf1})=x\otimes \ind(f)$,  where the right hand side is the tensor product by the vector bundle $\ind(f)$.
\end{enumerate}
\end{Proposition}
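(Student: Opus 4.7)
The plan is to establish each part largely by reducing to the analogues in \cite{BFNa} (for the symmetric Cartan case) together with the general machinery of coherent sheaves on ind-tamely presented ind-geometric stacks from Appendix B and \cite{CWb}; the only non-formal difficulty is the commutativity statement (b).

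For (a), I would invoke the affine cell decomposition of \S\ref{sec:BFN2}. The subsets $\calR_{\leqslant\gamma}$ for $\gamma\in\Lambda_\alpha^+$ form a $\tbG_\calO\times T_W$-stable filtration, and the open strata fit into vector bundles $p:\calR_\gamma\to\Gr_\gamma$ over the $G_\calO$-orbit $\Gr_\gamma\cong G_\calO/\mathrm{Stab}([\gamma])$, which carries an Iwahori-like paving by affine cells. Applying the Thom isomorphism to each $\calR_\gamma\to\Gr_\gamma$, followed by the standard equivariant paving argument for the orbits, gives that $K^{\tbG_\calO\times T_W}(\calR_\gamma)$ is free over $R_{\bGm\times T_W}$ (and even over $R_{\tbG\times T_W}$). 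The long exact sequence associated with the closed/open decomposition $\calR_{<\gamma}\subset\calR_{\leqslant\gamma}\supset\calR_\gamma$ then splits at each stage, and passing to the colimit over $\gamma$ yields (a). This argument simultaneously proves (c): if $x\in\calA_{\mu,R}^\lambda[\leqslant\gamma_1]$ and $y\in\calA_{\mu,R}^\lambda[\leqslant\gamma_2]$, then by construction of $m$ and $\delta$ in \eqref{delta}--\eqref{m}, the support of $x*y$ is contained in $\calR_{\leqslant\gamma_1+\gamma_2}$, because the convolution in the $\Gr$ factor has the analogous support property for affine Grassmannian convolution.

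For (d) and (e), I would argue directly from \eqref{*}. The map $\delta$ in \eqref{delta} is an ind-closed immersion into $\frakR/\tbG_\calO\times\frakR/\tbG_\calO$, and the product $f\otimes\bf1$ sits only in the first factor; since $f$ comes from a representation of $\tbG\times T_W$ which pulls back identically along any map to $B(\tbG\times T_W)$, we get $\delta^*((f\otimes\bf1)\boxtimes x)=(f\cdot\delta^*(\bf1\boxtimes x))$, and the projection formula for the ind-proper map $m$ then yields (d). For (e), the same projection formula reduces the computation to understanding $m_*\delta^*(x\boxtimes(f\otimes\bf1))$; unraveling via \eqref{R} shows that $\bf1=\cl_*\calO_{\calR_0/\tbG_\calO}$ is supported on the zero cocharacter stratum, which is the zero section of the bundle $\calT\to\Gr$. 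Pulling back $f$ along $m$ in the second slot picks up the $G_\calO$-induced bundle $\ind(f)=G_K\times_{G_\calO}(f\otimes\scrO_{\calN_\calO})$ from \eqref{induction}, giving exactly $x\otimes\ind(f)$.

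The main obstacle is (b), the commutativity of $\calA_{\mu,1}^\lambda$. I would follow the Beilinson--Drinfeld deformation strategy of \cite{BFNa}, adapted to our ind-tame setting via \cite{CWb}. One constructs a global version $\frakR_{\mathrm{BD}}$ over the configuration space $(\bbA^1)^2$, whose fiber over the diagonal recovers (via nearby cycles or specialization in K-theory) one ordering of the convolution, while the fiber over a distinct pair of points gives the external tensor product $x\otimes y$. The $S_2$-symmetry swapping the two marked points produces an isomorphism between the two orderings $x*y$ and $y*x$ \emph{provided} that the loop rotation action is trivialized, i.e. after specializing $q=\bfzeta=1$; this is why (b) only holds in $\calA_{\mu,1}^\lambda$. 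The technical difficulty is verifying that the ind-tame formalism of Appendix \ref{sec:CAT} accommodates the deformation construction for our symmetrized BFN space; once this is in place, the commutativity argument is essentially that of \cite[Thm.~5.5]{BFNa}.
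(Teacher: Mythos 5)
Your proposal is correct and follows essentially the same route as the paper, which proves (a) by the cellular fibration argument of \cite[\S 5.5]{CG}, (b) by the Beilinson--Drinfeld deformation argument of \cite[prop.~5.15]{BFNa} and \cite[\S B]{BFNc}, (c) by the support estimate of \cite[prop.~6.1]{BFNa}, and (d)--(e) by first-variable linearity as in \cite[thm.~3.10]{BFNa} combined with the unit and projection-formula considerations you describe. The only cosmetic difference is that the paper deduces $(f\otimes{\bf1})*x=f\otimes x$ from associativity and the unit axiom rather than by your direct computation with $\delta^*$ and $m_*$, which amounts to the same thing.
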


\begin{proof}
Arguing as in \cite[\S 5.5]{CG}, to prove (a) it is enough to observe that $\calR$ has a decomposition into 
$\dot T$-invariant affine algebraic cells and that 
$K^{\dot G_\calO}(\calR)=K^{\dot T_\calO}(\calR)^\W$.
Part (b) is proved as in \cite[prop.~5.15]{BFNa}, \cite[\S B]{BFNc} and Part (c) as in as in \cite[prop.~6.1]{BFNa}.
The first claim of (d) is as \cite[thm.~3.10]{BFNa},
the second one follows using the associativity of the monoidal product and the fact that $\bf1$ is a monoidal unit.
Part (e) is similar.
\end{proof}

Since $G$ and $\widetilde G$ are the Levi factors of the pro-groups $G_\calO$ and $\widetilde G_\calO$, we have obvious isomorphisms
\begin{align}\label{RGG}R_{G_\calO}\cong R_{G}\cong R_{\widetilde G}|_\zeta\cong R_{\widetilde G_\calO}|_\zeta.\end{align}
Therefore, by Proposition \ref{prop:basic}(a), we can identify 
\begin{align}\label{formal}
\calA_\mu^\rho=K^{G_\calO}(\calR).\end{align}
Since the product on $\calA_{\mu,R}^\lambda$ is $R_{\dot G}$-linear
in the first variable, we have an $R_{\bbG_m\times T_W}$-algebra embedding 
\begin{align*}
R_{\dot G}\cong R_{\dot G}\otimes{\bf1}\subset\calA_{\mu,R}^\lambda.
\end{align*}
We abbreviate 
\begin{align}\label{CL}
\calA_{\mu,R}^{\lambda,0}=R_{\dot G}\otimes{\bf1}.
\end{align}
and
\begin{align*}
\calA_\mu^{\rho,0}=\calA_{\mu,R}^{\lambda,0}|_{\zeta,\rho}
\end{align*}
Hence $\calA_\mu^{\rho,0}$ is a commutative subalgebra of $\calA_\mu^\rho$ which is isomorphic to $R_G$.

\subsubsection{The algebra $\gr\calA_{\mu,R}^\lambda$}\label{sec:gr}

Let $\gr\calA_{\mu,R}^\lambda$ denote the associated graded algebra of the filtration 
of $\calA_{\mu,R}^\lambda$ in Proposition \ref{prop:basic}.
We have an isomorphism of $R_{\dot G}$-modules
\begin{align*}
\gr\calA_{\mu,R}^\lambda\cong\bigoplus_{\gamma\in\Lambda^+} \calA_{\gamma,R}^\lambda
\end{align*}
Let $\W_\gamma$ be the stabilizer  in the  Weyl group $\W$ of the cocharacter $\gamma$, and 
$P_\gamma$ the stabilizer in $G$ of the point $[\gamma]\in\Gr$.
Since $\calR_\gamma$ is a vector bundle over the $G_\calO$-orbit $\Gr_\gamma$ and
$\Gr_\gamma$ is a vector bundle over $G/P_\gamma$,
we have
\begin{align}\label{grA}
\gr\calA_{\mu,R}^\lambda\cong\bigoplus_{\gamma\in\Lambda^+}(R_{\dot T})^{\W_\gamma}\otimes [\scrO_{\calR_\gamma}]
\end{align}
where $[\scrO_{\calR_\gamma}]$ is the fundamental class of $\calR_\gamma$.
Here, the tensor product is a tensor product over $\bbC$. 
To avoid confusions we write 
$(R_{\dot T})^{\W_\gamma}\otimes [\scrO_{\calR_\gamma}]=(R_{\dot T})^{\W_\gamma}\diamond[\scrO_{\calR_\gamma}]$.
Hence, for each  $f\in (R_{\dot T})^{\W_\gamma}$ the element
\begin{align}\label{MO}f\diamond [\scrO_{\calR_\gamma}]\in\gr\calA_{\mu,R}^\lambda\end{align}
 is the image in the associated graded of an element in $K^{\dot G_\calO}(\calR_{\leqslant\gamma})$
whose restriction to $\calR_\gamma$ is identified with the image $\ind(f)$
of $f$ by the induction isomorphism 
$$\ind:(R_T)^{\W_\gamma}\to K^{\dot G_\calO}(G/P_\gamma)\cong K^{\dot G_\calO}(\calR_\gamma)$$ 
in \eqref{inductiona}.
If $\gamma$ is minuscule this element has an obvious lift in $\calA_{\mu,R}^\lambda$.
Let $f\diamond [\scrO_{\calR_\gamma}]$ denote also this lift.
We will say that two integers $n$ and $m$ have the same sign if $nm\geqslant 0$.
Note that for each $\gamma,\eta\in\Lambda^+$ we have $\W_{\gamma+\eta}=\W_\gamma\cap \W_\eta$,
see \cite[rem.~6.4]{BFNa}.
Let $D_\gamma=\prod_{i\in I}\prod_{r=1}^{a_i}(D_{i,r})^{\gamma_{i,r}}$ and 
$D_{i,r}:R_{\dot T}\to R_{\dot T}$ is the difference operator such that
$D_{i,r}\cw_{j,s}=\zeta_i^{2\delta_{i,j}\delta_{r,s}}\cw_{j,s}D_{i,r}$ as in \eqref{DW}.
The following is similar to results in \cite{BFNa}, \cite{BFNb}.

\begin{Lemma}\label{lem:multgrA}
Let $\gamma,\eta\in\Lambda^+$. 
\hfill
\begin{enumerate}[label=$\mathrm{(\alph*)}$,leftmargin=8mm,itemsep=1mm]
\item
There is an element $a_{\gamma,\eta}\in R_{\dot G}$ such that
the following formula holds in $\gr\calA_{\mu,R}^\lambda$
\begin{align}\label{multgrA}
(f\diamond [\scrO_{\calR_\gamma}]) \star (g\diamond [\scrO_{\calR_\eta}])
=a_{\gamma,\eta}fD_\gamma(g)\diamond [\scrO_{\calR_{\gamma+\eta}}]
,\quad
f\in (R_{\dot T})^{\W_\gamma}
,\quad
g\in (R_{\dot T})^{\W_\eta}
\end{align}
\item
If $W=0$, then $a_{\gamma,\eta}=1$ if
$d_j\gamma_{j,s}-d_i\gamma_{i,r}$ and $d_j\eta_{j,s}-d_i\eta_{i,r}$ have the same sign if $c_{ij}<0$.
\item
If $W\neq 0$, then $a_{\gamma,\eta}=1$ if
$\gamma_{i,r}$ and $\eta_{i,r}$ have the same sign for each $(i,r)$, and
$d_j\gamma_{j,s}-d_i\gamma_{i,r}$ and $d_j\eta_{j,s}-d_i\eta_{i,r}$ have the same sign if $c_{i,j}<0$.
\end{enumerate}
\end{Lemma}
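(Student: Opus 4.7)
The plan is to work directly with the convolution definition $*=m_*\circ\delta^*$ from \eqref{*}, restricted to the open cells $\calR_\gamma$ and $\calR_\eta$. Recall that $\calR_\gamma=p^{-1}(\Gr_\gamma)$ is a $\tbG_\calO\times T_W$-equivariant vector bundle over $\Gr_\gamma\cong G/P_\gamma$, so its equivariant K-theory equals $(R_{\tbT\times T_W})^{\widehat\W_\gamma}[\scrO_{\calR_\gamma}]$ by the Thom isomorphism, as already recorded in \eqref{grA}. The first claim $\widehat\W_{\gamma+\eta}=\widehat\W_\gamma\cap\widehat\W_\eta$ is a standard statement about the stabilizers of dominant coweights: an element of the extended affine Weyl group fixes the dominant coweight $\gamma+\eta$ if and only if it fixes both $\gamma$ and $\eta$ separately, since the dominant cone is simplicial. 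This yields the inclusion $(R_{\tbT\times T_W})^{\widehat\W_\gamma}\cdot(R_{\tbT\times T_W})^{\widehat\W_\eta}\subset(R_{\tbT\times T_W})^{\widehat\W_{\gamma+\eta}}$, making the formula \eqref{multgrA} well-posed.

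For part (a), the plan is to verify that $m_*\delta^*$ sends the external product of $[\scrO_{\calR_\gamma}]$ and $[\scrO_{\calR_\eta}]$ into the filtered piece $\calA_{\mu,R}^\lambda[\leqslant\!\gamma+\eta]$. This reduces to the well-known factorization of the convolution map $\Gr_\gamma\,\widetilde\times\,\Gr_\eta\to\Gr$ through $\overline{\Gr_{\gamma+\eta}}$ for dominant $\gamma,\eta$, combined with the compatibility of $p$ with the monoidal product from Remark \ref{rem:H2}. In the associated graded, only the leading term in $\calA_{\mu,R}^\lambda[\gamma+\eta]$ survives; by \eqref{grA}, it must have the form $a_{\gamma,\eta}fg\otimes[\scrO_{\calR_{\gamma+\eta}}]$ for some $a_{\gamma,\eta}\in(R_{\tbT\times T_W})^{\widehat\W_{\gamma+\eta}}$, with the $fg$ coefficient produced by the $R_{\tbG\times T_W}$-linearity of Proposition \ref{prop:basic}(d)--(e).

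For (b) and (c), I would identify $a_{\gamma,\eta}$ with the K-theoretic Euler class (a $\lambda_{-1}$-class) of the normal bundle to $\calR_{\gamma+\eta}$ inside the convolution variety $\calR_\gamma\,\widetilde\times\,\calR_\eta$ restricted over $\Gr_{\gamma+\eta}$. Over the base point $[\gamma+\eta]$ the fiber of $\calR_{\gamma+\eta}$ is $\calN_\calO\cap z^{-\gamma-\eta}\calN_\calO$, which embeds into the fiber of the convolution coming from $\calN_\calO\cap z^{-\gamma}\calN_\calO$, and the quotient decomposes under $T\times T_W$ into weight spaces. Splitting $\calN_\calO$ according to \eqref{Rep}, the $\Hom_I(W,V)$ summand contributes characters comparing $\rho_{i,s}$ with $\gamma_{i,r}$ and with $\gamma_{i,r}+\eta_{i,r}$, while the $\End_E(V)$ summand contributes characters comparing $\gamma_{j,s}-\gamma_{i,r}$ with $(\gamma+\eta)_{j,s}-(\gamma+\eta)_{i,r}$. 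When the sign hypotheses of (b) or (c) hold, the two sets of lattice conditions coincide weight by weight, so the normal bundle has rank zero and $a_{\gamma,\eta}=1$; this explains why (b) needs only the difference conditions (no framing contribution when $W=0$) while (c) additionally requires the sign conditions on $\gamma_{i,r}$ themselves coming from the $\Hom_I(W,V)$ part.

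The main obstacle will be the precise weight-by-weight bookkeeping of the normal bundle, which must account for the asymmetry introduced by the symmetrizers $d_i$ and the loop rotation shift \eqref{loop2}; this is the same kind of analysis carried out in the proof of Proposition \ref{prop:fixedlocus} above. The homological analog of the entire computation is done in \cite{BFNa}; the K-theoretic refinement here replaces Euler classes by $\lambda_{-1}$-classes of normal bundles, but the combinatorial vanishing criterion in terms of signs of $\gamma_{i,r},\eta_{i,r}$ and their differences is combinatorially the same.
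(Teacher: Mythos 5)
Your overall strategy is sound and your part (b)/(c) analysis is, at its core, the same computation the paper performs: the correction factor is the $\lambda_{-1}$-class (in the paper's notation, the ratio of $\Wedge$-classes in \eqref{weight1}) of a virtual bundle built from the weight spaces of $\calN_\calO$ compared at $\gamma$, $\gamma+\eta$ and $0$, and under the sign hypotheses the three factors cancel weight by weight, the $\Hom_I(W,V)$ summand being exactly what forces the extra condition on the signs of $\gamma_{i,r},\eta_{i,r}$ when $W\neq 0$. Two caveats on your bookkeeping: the factor is a \emph{ratio} of Euler classes (an excess-intersection/virtual normal bundle coming from the derived pullback $\delta^*$), not the class of an honest normal bundle, so "rank zero" should be read virtually; and the framing contribution involves the characters $\cz_{i,s}$ of $T_W$ together with the lattice exponents $\gamma_{i,r}, \gamma_{i,r}+\eta_{i,r}$ — the coweight $\rho$ plays no role at this stage since $T_W$ is not yet specialized.

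Where you genuinely diverge from the paper is in how you reach the bilinear formula of part (a), and this is also where your argument has a gap. You work directly with $m_*\delta^*$ over the open cells $\calR_\gamma$, $\calR_\eta$ and invoke Proposition \ref{prop:basic}(d)--(e) to get the coefficient $a_{\gamma,\eta}fg$. But those statements only give linearity for $f\in R_{\tbG\times T_W}$, i.e.\ for $\W$-invariant classes, whereas here $f$ and $g$ are only $\widehat\W_\gamma$- and $\widehat\W_\eta$-invariant; they are classes on $\Gr_\gamma\cong G/P_\gamma$ and $\Gr_\eta\cong G/P_\eta$, and the assertion that the product is bilinear over these partial invariants, with a single factor $a_{\gamma,\eta}$ independent of $f$ and $g$, is precisely what must be proved. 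The paper handles this by abelianizing: it introduces $\calA_T=K^{\tbT_\calO\times T_W}(\calR_T)$, where each graded piece is free of rank one over $R_{\tbT\times T_W}$ so that the product formula with the explicit factor \eqref{weight1} is immediate, and then transfers to $\calA_G$ using the injection $\iota_*:\calA_T\to R_\bfT\otimes_{R_\bfG}\calA_G$ of Remark \ref{rem:H2} together with the localization formula for $(\gr\iota_*)^{-1}$ (a Weyl sum with denominators $\Wedge(T_{w\gamma}\Gr_\gamma)$). Your route can be completed — e.g.\ by a projection-formula argument over $\Gr_\gamma\,\ttimes\,\Gr_\eta$ using that $m$ is generically one-to-one onto $\Gr_{\gamma+\eta}$ and that $\widehat\W_{\gamma+\eta}=\widehat\W_\gamma\cap\widehat\W_\eta$ — but as written the factorization $a_{\gamma,\eta}fg$ is asserted rather than established. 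The trade-off is that the paper's detour through $\calA_T$ buys exact bilinearity and the closed formula \eqref{weight1} in one stroke, at the cost of introducing the abelianized algebra; your direct approach is more geometric but must separately justify the descent from torus-equivariant to parabolic-equivariant coefficients.
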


\begin{proof}
The proof of the proposition is similar to the proof of \cite[prop.~6.2]{BFNa}.
We consider the $\infty$-stack $\frakR_T$ and its classical truncation $\calR_T$  by setting $H=T$ in Remark \ref{rem:H1}.
Note that there is an obvious inclusion 
$\Lambda=(\Gr_T)_\red\subset\Gr_G$ such that
$(\calR_T)_\red\cong  \Lambda\times_{(\Gr_G)_\red}(\calR_G)_\red$.
Set 
$$
\calA_T=K^{\dot T_\calO}(\calR_T)
,\quad
\calA_G=K^{\dot G_\calO}(\calR_G)
$$
We equip $\calA_T$ with the product in \S\ref{sec:H2}.
For each $\gamma\in\Lambda$ let $[\scrO_{(\calR_T)_\gamma}]$ be the class in $\calA_T$ of the structural sheaf of the 
fiber $(\calR_T)_\gamma$ of the map $\calR_T\to\Gr_T$ at $\gamma$.
Recall that if  $\gamma\in\Lambda^+$ then $[\scrO_{\calR_\gamma}]$ 
is the class of $\scrO_{\calR_\gamma}$ in $\calA_G$,
see \eqref{Rgamma}.
We have
\begin{align}\label{multgrAT}
(f\diamond [\scrO_{(\calR_T)_\gamma}]) \star (g\diamond [\scrO_{(\calR_T)_\eta}])=
a_{\gamma,\eta}fD_\gamma(g)\diamond [\scrO_{(\calR_T)_{\gamma+\eta}}]
,\quad
f,g\in R_{\dot T}
,\quad
\gamma,\eta\in\Lambda
\end{align}
for some element $a_{\gamma,\eta}\in R_{\dot T}$.
The definition of the monoidal product in \eqref{*} yields
\begin{align}\label{a-gamma-eta2}
a_{\gamma,\eta}=
\Wedge\Big(\frac{z^\gamma  N_\calO}{ N_\calO\cap z^\gamma  N_\calO}\Big)\cdot
\Wedge\Big(\frac{ N_\calO\cap z^{\gamma+\eta} N_\calO}{ N_\calO\cap z^\gamma  N_\calO\cap z^{\gamma+\eta}  N_\calO}\Big)\cdot
\Wedge\Big(\frac{z^\gamma  N_\calO\cap z^{\gamma+\eta} N_\calO}{ N_\calO\cap z^\gamma  N_\calO\cap z^{\gamma+\eta}  N_\calO}\Big)^{-1},
\end{align}
and the quotients in \eqref{a-gamma-eta2} are all finite dimensional representations of the tous $T$.
Now, the pushforward by the map $\iota$ in \eqref{iota} with $H=T$ yields an injective map
\begin{align*}\iota_*:\calA_T\to R_T\otimes_{R_G}\calA_G.
\end{align*}
We equip $\calA_T$ with the filtration induced by the filtration of $\calA_G$ in Proposition \ref{prop:basic}.
Taking the associated graded, we get a generically invertible $R_T$-linear embedding
$$\gr\iota_*:\gr\calA_T\to R_T\otimes_{R_G}\gr\calA_G$$
such that
\begin{align}\label{multgrAT2}
(\gr\iota_*)^{-1}(f\diamond [\scrO_{\calR_\gamma}])=\sum_{w\in \W/\W_\gamma}
\frac{wf}{\Wedge(T_{w\gamma}\Gr_\gamma)}\diamond [\scrO_{(\calR_T)_{w\gamma}}]
,\quad
f\in (R_{\dot T})^{\W_\gamma}
\end{align}
Here $\Wedge(V)$ is the equivariant Euler class given by
\begin{align}\label{Wedge}\Wedge(V)=\sum_{i\geqslant 0}(-1)^i\Wedge^i(V^\vee).\end{align}
The formula \eqref{multgrA} follows as in \cite[prop.~6.2]{BFNa}
from \eqref{multgrAT}, \eqref{multgrAT2} and \eqref{iota*},
 which implies that the map $\gr \iota_*$ is an algebra homomorphism.
 
Finally, we compute the constant $a_{\gamma,\eta}$.
The vector space $z^\gamma N_\calO$ is a representation of the torus $\dot T$.
It decomposes as an infinite sum of characters, each of them being a triple of characters of $T$, $\bbG_m$ and $T_W$.
We consider separately the contributions of the summands 
$ N_\calO=N^{(1)}\oplus N^{(2)}$ in
\eqref{NO1} and \eqref{NO2} given by
\begin{align*}
N^{(1)}=\bigoplus_{\substack{i\to j\\c_{ij}=-1}}\Hom(V_i,V_j)\llb z_j\rrb\oplus
\bigoplus_{\substack{i\to j\\c_{ij}<-1}}\Hom(V_i,V_j)\llb z_i\rrb z_i^{1+c_{ij}},\quad
N^{(2)}=\bigoplus_{i\in I}\Hom(W_i,V_i)\llb z_i\rrb.
\end{align*}
Using \eqref{GNOK}, \eqref{sigma}, \eqref{loop2} we deduce that $z^\gamma N_\calO$
is the sum of the following characters 
\begin{equation}\label{weightsN}
\begin{minipage}{0.9\linewidth}
\begin{enumerate}[label=$\mathrm{(\alph*)}$,leftmargin=4mm,itemsep=2.5mm]
\item[$\bullet$]
$\Big(\cw_{j,s}-\cw_{i,r}\,,\,d_j(\gamma_{j,s}+n+1)-d_i(\gamma_{i,r}+\frac12)\,,\,0\Big)$ for all 
$i\to j\,,\,c_{ij}=-1,\,r\in[1,a_i], s\in[1,a_j], n\in\bbN,$
\item[$\bullet$]
$\Big(\cw_{j,s}-\cw_{i,r}\,,\,d_j(\gamma_{j,s}+\frac12)-d_i(\gamma_{i,r}-n)\,,\,0\Big)$ for all
$i\to j\,,\,c_{ij}<-1,\,r\in[1,a_i], s\in[1,a_j], n\in\bbN$,
\item[$\bullet$]
$\Big(\cw_{i,r}\,,\,d_i(\gamma_{i,r}+n)\,,\,-\cz_{i,t}\Big)$
for all $i\in I$, $r\in[1,a_i],$  $ t\in [1,l_i]$, $n\in\bbN$.
\end{enumerate}
\end{minipage}
\end{equation}
A computation using \eqref{a-gamma-eta2} and \eqref{weightsN} yields
\begin{align}\label{a-gamma-eta2}
a_{\gamma,\eta}=
\prod_A(1-\zeta_j^{-c_{ji}-2-2m}\cbfw_{j,s}^{-1}\cbfw_{i,r})\cdot
\prod_B(1-\zeta_i^{c_{ij}-2m}\cbfw_{j,s}^{-1}\cbfw_{i,r})\cdot
\prod_C(1-\zeta_i^{-2m}\cbfw_{i,r}^{-1}\cbfz_{i,t})
\end{align}
where the products run over the elements
$i,j\in I$, $r\in[1,a_i],$, $s\in[1,a_j]$, $t\in [1,l_i]$ and $m,m',m''\in\bbZ$ such that $m,m''\geqslant 0>m'$ or $m'\geqslant 0>m,m''$ and
\begin{align*}
A&=\{d_j\gamma_{j,s}-d_i\gamma_{i,r}=d_j(m-m')\,,\,
d_j\eta_{j,s}-d_i\eta_{i,r}=d_j(m'-m'')\,,\,i\to j\,,\,c_{ij}=-1\},\\
B&=\{d_j\gamma_{j,s}-d_i\gamma_{i,r}=d_i(m-m')\,,\,
d_j\eta_{j,s}-d_i\eta_{i,r}=d_i(m'-m'')\,,\,i\to j\,,\,c_{ij}<-1\},\\
C&=\{\gamma_{i,r}=m-m'\,,\,\eta_{i,r}=m'-m''\}.
\end{align*}
The lemma is a direct consequence of the formula \eqref{a-gamma-eta2}.

\end{proof}

\section{Shifted truncated affine quantum groups and QH algebras}

This section is a reminder on shifted affine quantum groups and truncated ones.
We follow  \cite{FT19} and \cite{H23}.

\subsection{Shifted affine quantum groups}\label{sec:SQG}

\subsubsection{Definition of the shifted affine quantum groups}\label{sec:SQG1}
For each $m\in\bbN$ and each $i,j\in I$ we define
\begin{align}\label{bmu}
\begin{split}
[m]_\bfzeta&=(\bfzeta^m-\bfzeta^{-m})/(\bfzeta-\bfzeta^{-1})
,\quad
[m]_\bfzeta!=[m]_\bfzeta[m-1]_\bfzeta\cdots[1]_\bfzeta,\\
\bfzeta_i&=\bfzeta^{d_i},\quad
\delta(u)=\sum_{n\in\bbZ}u^n
,\quad
g_{ij}(u)=\frac{\bfzeta_i^{c_{ij}}u-1}{u-\bfzeta_i^{c_{ij}}}\\
\bmu^\pm&=\sum_{i\in I}m_i^\pm\bomega_i
,\quad
\bmu=\bmu^++\bmu^-
,\quad
m_i=m_i^++m_i^-
,\quad
m_i^\pm\in\bbZ.
\end{split}
\end{align}
Consider the formal series 
\begin{align*}
x^\pm_i(u)=\sum_{n\in\bbZ}x^\pm_{i,n}\,u^{-n}
 ,\quad
\psi^\pm_i(u)=\sum_{n\geqslant-m_i^\pm}\psi^\pm_{i,\pm n}\,u^{\mp n}.
\end{align*}
Let $F$ be the fraction field the ring $R$  in \eqref{zeta}.
Let $\bfU_{\bmu^+,\bmu^-,F}$ be the $(\bmu^+,\bmu^-)$-shifted adjoint quantum loop group over $F$ 
with quantum parameter $\bfzeta$ of type $\bfc$.
It is the $F$-algebra generated by the elements
$$x^\pm_{i,m}
 ,\quad
\psi^\pm_{i,\pm n}
,\quad
(\phi^\pm_{i})^{\pm1}
,\quad
i\in I
 ,\quad
m,n\in\bbZ
 ,\quad
n\geqslant-m^\pm_i$$
with the following defining relations  
where  $i,j\in I$
\hfill
\begin{enumerate}[label=$\mathrm{(\alph*)}$,leftmargin=8mm,itemsep=2.5mm]
\item $\phi^\pm_i$ is invertible with inverse $(\phi^\pm_i)^{-1}$ and
$\psi_{i,\mp m_i^\pm}^\pm=\prod_j(\phi_j^\pm)^{\pm c_{ji}}$,
\item
$\phi^\pm_j\psi_i^+(u)=\psi_i^+(u)\phi^\pm_j$ 
and
$\phi^\pm_j\psi_i^-(u)=\psi_i^-(u)\phi^\pm_j$, 
\item
$\phi^\pm_j\,x^+_i(u)=\bfzeta_j^{\pm \delta_{ij}}x^+_i(u)\,\phi_j^\pm$
and
$\phi^\pm_j\,x^-_i(u)=\bfzeta_j^{\mp\delta_{ij}}x^-_i(u)\,\phi_j^\pm$,
\item 
$\psi^+_i(u)\,\psi^\pm_j(v)=\psi^\pm_j(v)\,\psi^+_i(u)$
and
$\psi^-_i(u)\,\psi^\pm_j(v)=\psi^\pm_j(v)\,\psi^-_i(u)$,
\item 
$\psi^+_i(u)\,x^\pm_j(v)=x^\pm_j(v)\,\psi^+_i(u)\,g_{ij}(u/v)^{\pm 1}$
and
$\psi^-_i(u)\,x^\pm_j(v)=x^\pm_j(v)\,\psi^-_i(u)\,g_{ij}(u/v)^{\pm 1}$,
\item $x^\pm_i(u)\,x^\pm_j(v)=x^\pm_j(v)\,x^\pm_i(u)\,g_{ij}(u/v)^{\pm 1}$,
\item $(\bfzeta_i-\bfzeta_i^{-1})[x^+_i(u)\,,\,x^-_j(v)]=\delta_{ij}\,\delta(u/v)\,(\psi^+_i(u)-\psi^-_j(u\rrp$,
\item the quantum Serre relations 
$$\sum_{\sigma\in \frakS_s}\sum_{r=0}^{s}(-1)^r\Big[\begin{matrix}s\\r\end{matrix}\Big]_{\bfzeta_i}
x^\pm_i(u_{\sigma(1)})\cdots x^\pm_i(u_{\sigma(r)})x^\pm_j(v)x^\pm_i(u_{\sigma(r+1)})\cdots
x^\pm_i(u_{\sigma(s)})=0
,\quad
s=1-c_{ij}
,\quad
i\neq j$$
\end{enumerate}
\setcounter{equation}{7}
The rational function $g_{ij}(u)$ is expanded as a power series of $u$.
Let $\bfU_{\bmu^+,\bmu^-,F}^\pm$ 
be the subalgebra generated by the $x_{i,n}^\pm$'s
and $\bfU_{\bmu^+,\bmu^-,F}^0$ the subalgebra generated by 
the $\psi_{i,\pm n}^\pm$'s and the $(\phi^\pm_{i})^{\pm1}$'s.
We define $(x^\pm_{i,n})^{[m]}$ and $h_{i,\pm m}$ such that
\begin{align*}
(x^\pm_{i,n})^{[m]}&=\frac{1}{[m]_{\bfzeta_i}!}(x^\pm_{i,n})^{m},\\[1mm]
\psi_i^\pm(u)&= u^{\pm m^\pm_i}\psi_{i,\mp m^\pm_i}^\pm
\exp\Big(\pm(\bfzeta_i-\bfzeta_i^{-1})\sum_{m>0}h_{i,\pm m}u^{\mp m}\Big).
\end{align*}
We define $\bfU_{\bmu^+,\bmu^-,R}$ 
to be the $R$-subalgebra of $\bfU_{\bmu^+,\bmu^-,F}$ generated by 
\begin{align}\label{SLGR}
(\phi^\pm_{i})^{\pm 1}
 ,\quad
h_{i,\pm m}/[m]_{\bfzeta_i}
 ,\quad
(x^\pm_{i,n})^{[m]}
,\quad
i\in I
,\quad
n\in\bbZ
,\quad
m\in\bbN^\times.
\end{align}
Specializing $\bfzeta$ to $\zeta$ as in \eqref{bcrho}, we define
$$\bfU_{\bmu^+,\bmu^-}=\bfU_{\bmu^+,\bmu^-,R}|_\zeta.$$
Up to isomorphisms, the algebra $\bfU_{\bmu^+,\bmu^-}$ only depends on the cocharacter $\bmu$.
From now on, we will assume that $\bmu^+=0$.
We define 
$$\bfU_\bmu=\bfU_{0,\bmu}
,\quad
\bfU_{\bmu,F}=\bfU_{0,\bmu,F}
,\quad
\bfU_{\bmu,R}=\bfU_{0,\bmu,R}.$$
We define
$\bfU_{\bmu}^+$, $\bfU_{\bmu}^0$ and $\bfU_{\bmu}^-$ similarly.
We have a triangular decomposition
$$\bfU_{\bmu}=\bfU_{\bmu}^+\otimes\bfU_{\bmu}^0\otimes\bfU_{\bmu}^-.$$
We abbreviate 
$$
\zeta_i=\zeta^{d_i}
,\quad
\bfE_{i,n}=(\zeta_i-\zeta_i^{-1})\,x^+_{i,n}
,\quad
\bfF_{i,n}=(\zeta_i-\zeta_i^{-1})\,x^-_{i,n}
.$$

\begin{Remark}\label{rem:QLG}
\hfill
\begin{enumerate}[label=$\mathrm{(\alph*)}$,leftmargin=8mm,itemsep=2.5mm]
\item
The element $\phi_i^+\in\bfU_\bmu$ is an analogue of $\zeta_i^{\bomega_i}$ where $\bomega_i$ is the $i$th fundamental coweight.
\item 
The element $\phi^+_{i}\,\phi^-_{i}$ in $\bfU_\bmu$ is central for each $i\in I$.
The (non shifted) quantum loop group $\bfU$ is the quotient
of  $\bfU_0$ by the relations 
\begin{align}\label{lastrelation}
\phi^+_{i}\,\phi^-_{i}=1,\quad i\in I.
\end{align}
\end{enumerate}
\end{Remark}

\subsubsection{Shifted affine quantum groups and the integral category $\scrO$}\label{sec:O}
\begin{Definition}\label{def:O1}
We define $\scrO_\bmu$ to be the category of all finitely generated $\bfU_\bmu$-modules $M$ such that
\begin{enumerate}[label=$\mathrm{(\alph*)}$,leftmargin=8mm,itemsep=1mm]
\item the central element $\phi^+_{i}\,\phi^-_{i}-(-\zeta_i)^{-a_i}$ acts by 0 for each $i\in I$ for some integers $a_i\in\bbZ$,
\item the action of $\phi_i^+$ on $M$ is semisimple with eigenvalues in $(\zeta_i)^\bbZ$ for each $i\in I$,
\item the eigenspaces of $\{\phi_i^+\,;\,i\in I\}$ in $M$ are finite dimensional, 
\item the weights of $\{\phi_i^+\,;\,i\in I\}$ are bounded above, i.e., for each $i\in I$  the spectrum of $\phi_i^+$ belongs to the set
$\{\zeta_i^k,;\,k\in\bbZ_{\leqslant n_i}\}$ for some integer $n_i$.

\end{enumerate}
\end{Definition}

Let $\Psi$ be an $I$-tuple of rational functions over $\bbC$ 
such that $\Psi_i(u)$ is regular and non zero at $u=\infty$ for all $i\in I$.
Assume that the expansions of the rational function $\Psi_i(u)$
in powers of $u$ are of the following form
\begin{align*}
\Psi^+_i(u)=\sum_{n\in\bbN}\Psi_{i,n}^+\,u^{-n}
 ,\quad
\Psi^-_i(u)=\sum_{n\geqslant-m_i}\Psi_{i,-n}^-\,u^{n}
,\quad
\Psi_{i,0}^+,\,\Psi_{i,m_i}^-\neq 0
\end{align*}
where the integers $m_i$ are as in \eqref{bmu}.
Then, we will say that $\Psi$ is an $\ell$-weight of degree $\bmu$. 
Let $Z_{i,r}$ be the $\ell$-weight given by
$$(Z_{i,r})_j(u)=(1-\zeta^ru^{-1})\ \text{if}\ j=i,\ \text{and}\ (Z_{i,r})_j(u)=1\ \text{else}.$$
For any $\bfU_\bmu$-module $M$ 
the generalized $\ell$-weight space of $M$ of $\ell$-weight $\Psi$ is
\begin{align}\label{MPsi}
M_\Psi=\{v\in M\,;\,(\psi^+_{i,n}-\Psi^+_{i,n})^\infty\cdot v=0\,,\,i\in I\,,\,n\in\bbN\}.
\end{align}
By \cite[\S 4]{H23} any object in $\scrO_\bmu$ is the direct sum of its generalized $\ell$-weight spaces 
and the latter are finite dimensional. Further, 
the generalized $\ell$-weight space of $\ell$-weight $\Psi$ 
is zero unless $\Psi$ is of degree $\bmu$.
Finally, the simple objects in $\scrO_\bmu$ are labelled by their highest
$\ell$-weights, which run over the set of all $\ell$-weights of degree $\bmu$.
More precisely, the module $L(\Psi)$ is generated by a vector $v$ such that
$$x_{i,n}^+\cdot v=0
 ,\quad
\psi^+_{i,n}\cdot v=\Psi^+_{i,n}v
 ,\quad
i\in I
,\quad
n\in\bbZ.$$
The $q$-character of a module $M\in\scrO_\bmu$ is the formal series
$$q\text{-}\!\ch(M)=\sum_\Psi\dim(M_\Psi)\,\Psi$$
Let ${}^0\scrO_\bmu\subset\scrO_\bmu$ be the full subcategory of modules $M$ such that
$M_\Psi=0$ unless
\begin{align}\label{even2}\Psi=\prod_{(i,r)\in{}^0\!I}(Z_{i,r})^{v_{i,r}},\quad v=(v_{i,r})\in\bbZ {}^0\!I\end{align}
where the product is the componentwise multiplication of $I$-tuples.
We call ${}^0\scrO_\bmu$ the integral category $\scrO$ of $\bfU_\bmu$, see \cite[Def.~9.13]{GHL24} for details.

\begin{Remark}\label{rem:catO}\hfill
\begin{enumerate}[label=$\mathrm{(\alph*)}$,leftmargin=8mm,itemsep=1mm]
\item 
Definition \ref{def:O1} differs from the definition of $\scrO_\bmu$ in \cite{H23}.
Condition (a) is not mentioned there. 
We imposed it in order to fit with Coulomb branches via the morphism $\Phi$ in \eqref{Phi}.
See \eqref{A0} and \eqref{formula} for more details.
Condition (d) is written differently in loc.~cit.
By \eqref{equiv}, the category $\scrO_\bmu$ is unchanged if we replace (c) by the condition (c') that
the $\ell$-weight spaces of $M$ are finite dimensional.

\item
For any tuple $(i,r)\in I^\bullet$ let $Y_{i,r}$ be the $\ell$-weight 
whose $j$th component is $\zeta_i(1-\zeta^{r-d_i} u^{-1})\,(1-\zeta^{r+d_i}u^{-1})^{-1}$ if $j=i$ and 1 else.
For any $v\in\bbZ I^\bullet$, we consider the $\ell$-weight $Y^v$ of degree 0 given by 
\begin{align}\label{Y}
Y^v=\prod_{(i,r)\in I^\bullet}(Y_{i,r})^{v_{i,r}}
\end{align}
Let $\scrO\subset\scrO_0$ be the subcategory of all $\bfU_0$-modules which descend to $\bfU$-modules.
A simple module $L(\Psi)\in\scrO$ is finite dimensional if and only if 
$\Psi=Y^v$ for some $v\in\bbN I^\bullet$.
\end{enumerate}
\end{Remark}

\subsection{Shifted truncated affine quantum groups}\label{sec:STQG}

Fix a cocharacter $\bmu=\sum_{i\in I}m_i\,\bomega_i$ as above (the shift).
Choose a dominant cocharacter $\blambda=\sum_{i\in I}l_i\,\bomega_i$ and 
an element $\balpha=\sum_{i\in I}a_i\,\balpha_i$ of the coroot semigroup such that
\begin{align}\label{blam}
\blambda=\bmu+\balpha.
\end{align}
Note that
$m_i=l_i-\sum_{j}a_jc_{ji}$ for each $i\in I$.
Next, we define $\alpha$, $\lambda$, $\mu$ by
\begin{align}\label{lam2}
\alpha=\sum_{i\in I}a_i\delta_i
,\quad
\lambda=\sum_{i\in I}l_i\delta_i
,\quad
\mu=\lambda-C\cdot\alpha.
\end{align}
From now on, we fix $\zeta\in\bbC^\times$ which  is not a root of unity,
and we fix the even coweight $\rho$ in ${}^0\!\Lambda_\lambda$.
We fix the tuple $\dlambda$ as in  \eqref{VW'} and \eqref{dlam2}.

\subsubsection{Definition of the map $\Phi$}
We want to define an $R_{\bbG_m\times T_W}$-algebra homomorphism
\begin{align}\label{Phi}
\Phi:\bfU_{\bmu,R}\otimes R_{T_W}\to\calA_{\mu,R}^\lambda
\end{align}
To do this, we identify in the obvious way 
\begin{align}\label{RT1}
R_{T_W}=\bbC[\,\cbfz_{i,s}^{\pm 1}\,;\,i\in I\,,\,s\in [1,l_i\rrb
\end{align}
Set $M_i^+=0$ and $M_i^-=l_i-m_i$.
There are formal series 
$$\bfA^\pm_i(u)=\sum_{n\geqslant 0}\bfA^\pm_{i,\pm n}u^{\mp n}\in\bfU_{\bmu,R}^0\otimes R_{T_W}\llb u^{\mp 1}\rrb$$  such that
\begin{align}\label{AZ}
\begin{split}
\psi^\pm_i(u)&=u^{M_i^\pm}\cdot
\bfZ_i(u)\cdot \bfA^\pm_i(u)^{-1}\cdot \bfA^\pm_i(\zeta_i^{-2}u)^{-1}\cdot
\prod_{c_{ji}<0}\prod_{n=1}^{-c_{ji}}\bfA^\pm_j(\zeta_j^{-c_{ji}-2n}u),\\
\bfZ_i(u)&=\prod_{s=1}^{l_i}\big(1-\zeta_i^2\cbfz_{i,s}u^{-1}\big).
\end{split}
\end{align}
See \cite[\S 7]{FT19} for details. 
In particular, we have
\begin{align}\label{A0}
\bfA_{i,0}^\pm=(\phi_i^\pm)^{-1}.
\end{align}
For any dimension vector $\nu = \sum_i r_i\delta_i$ with $0\leqslant r_i \leqslant a_i$, let
\begin{align*}
\scrQ_{i,\nu}=(V_{i,\calO}/L)\otimes\scrO_{\calR_{\omega_\nu}}
,\quad
\scrS_{i,\nu}=(L/V_{i,\calO})\otimes\scrO_{\calR_{\omega_{-\nu}}}
\end{align*}
denote the tautological rank $r_i$ bundles over  the space $\calR_{\omega_\nu}$ and  $\calR_{\omega_{-\nu}}$ in
\eqref{RGr}.
We abbreviate 
$$\scrQ_i=\scrQ_{i,\delta_i},\quad\scrS_i=\scrS_{i,\delta_i}.$$
The algebra $R_\tbG$ is generated by $\bfzeta^{\pm 1}$, the square root of the determinant
$\det(V_i)^{\pm \frac12}$ and the exterior powers $\Lambda_{i,r}=\Wedge^r(V_i)$ and $\Lambda_{i,-r}=\Wedge^r(V_i^\vee)$
where $\Lambda_{i,\pm r}=0$ if $r>a_i$.
We abbreviate 
\begin{align}\label{L}D_i=\det(V_i)^{\frac12}
,\quad
D_i^+=\bigotimes_{i\to j}D_j
,\quad
D_i^-=\bigotimes_{j\to i}D_j.
\end{align}
We consider the elements of $\calA_{\mu,R}^\lambda$ given by
\begin{align}\label{formula}
\begin{split}
E_{i,r}&=(-1)^{a_i}(D_i^-)^{-1}\otimes(\bfzeta_i^2[\scrS_i])^{\otimes (r+a_i)},\\
F_{i,r}&=(-1)^{a_i^+}(\bfzeta_i)^{a_i^--1}D_i^+\otimes
(\bfzeta_i^2[\scrQ_i])^{\otimes (r-a_i^+)},\\
A_{i,r}^+&=(-1)^rD_i^{-1}\otimes\Lambda_{i,r}\otimes {\bf1},\\
A_{i,-r}^-&=(-1)^r(-\bfzeta_i)^{a_i}D_i\otimes\Lambda_{i,-r}\otimes {\bf1}
\end{split}
\end{align}
Here the tensor product is the $R_{\tbG}$-action on $\calA_{\mu,R}^\lambda$ and we have set
\begin{align}\label{apm}
a_i^+=-\sum_{i\to j}a_jc_{ji},\quad a_i^-=-\sum_{i\to j}a_jc_{ij}.
\end{align}
The following is similar to \cite[thm.~8.1]{FT19}, \cite[thm.~5.6]{NW23}. A proof is given in the appendix.

\begin{Proposition} \label{prop:Phi}
There is a unique $R_{\bGm\times T_W}$-algebra homomorphism 
$$\Phi:\bfU_{\bmu,R}\otimes R_{T_W}\to\calA_{\mu,R}^\lambda$$ 
such that
$\bfE_{i,r}\mapsto E_{i,r}$, $\bfF_{i,r}\mapsto F_{i,r}$ and $\bfA_{i,r}^\pm\mapsto A_{i,r}^\pm$.
\qed
\end{Proposition}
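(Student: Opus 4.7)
The plan is to adapt the strategies of \cite{FT19} (symmetric $\bfc$, arbitrary $W$) and \cite{NW23} (non-symmetric $\bfc$, $W=0$) to our mixed setting. Uniqueness is straightforward: by \eqref{A0} the image of $(\phi^\pm_i)^{\pm 1}$ is forced to be $(A^\pm_{i,0})^{\mp 1}$, and by the formula \eqref{AZ} the images of $\psi^\pm_{i,\pm n}$ are then determined by those of the $A^\pm_{j,\pm r}$ (and by the $\cbfz_{i,s}$, which go to the tautological generators of $R_{T_W}$ via \eqref{RT1}). Since $\bfU_{\bmu,R}$ is generated over $R_{\bGm}$ by $\{\bfE_{i,r},\bfF_{i,r},\bfA^\pm_{i,\pm r},(\phi^\pm_i)^{\pm 1}\}$, uniqueness follows. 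For existence, we must verify that the elements $E_{i,r}$, $F_{i,r}$, $A^\pm_{i,\pm r}$ satisfy relations (a)--(h) of \S\ref{sec:SQG1}, reformulated through \eqref{AZ}.

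The main computational tool is the torus-refined filtration from Lemma \ref{lem:multgrA}. First I would pass through the injection $\iota_*:\calA_T\hookrightarrow R_{\bfT}\otimes_{R_\bfG}\calA_{\mu,R}^\lambda$, where the associated graded product \eqref{multgrAT} reduces everything to explicit rational expressions in $R_{\tbT\times T_W}$ via the Euler class formula \eqref{weight1}. The elements $E_{i,r}$ and $F_{i,r}$ live in the minuscule strata $\calR_{\pm\delta_i}$, whose stabilizers are explicit maximal parabolics of $G_\calO$; this lets me compute $E_{i,r}*E_{j,s}$, $F_{i,r}*F_{j,s}$, and cross products directly using \eqref{weight1}. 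The Cartan commutativity (relation (d)) is automatic, since all $A^\pm_{i,\pm r}$ lie in the commutative subalgebra $\calA_{\mu,R}^{\lambda,0}$ of \eqref{CL}. The relations (b) and (c) involving $\phi^\pm_j$ reduce via \eqref{A0} and Proposition \ref{prop:basic}(e) to scalar computations comparing the characters \eqref{weightsN} on both sides. Relations (e) and (f), which are the $\psi x$ and $xx$ commutation rules governed by $g_{ij}(u)$, follow from evaluating \eqref{weight1} for the pairs of cocharacters $(\gamma,\eta) = (\pm\delta_i,\pm\delta_j)$: the quadratic factors $(\bfzeta_i^{c_{ij}} u - 1)/(u - \bfzeta_i^{c_{ij}})$ emerge precisely from the $\End_E(V_\calO)$ contribution in \eqref{weightsN}, and the non-symmetric shift $\sigma_i - \sigma_j$ from \eqref{sigma} is designed exactly so that $d_i c_{ij}=d_j c_{ji}$ balances.

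The hard part will be relation (g), the commutator $[x^+_i(u), x^-_j(v)]$, and relation (h), the quantum Serre relations. For (g) with $i=j$, one must match an explicit convolution on the $\calR_{\delta_i}\cap\calR_{-\delta_i}$ stratum with the rational function $(\psi^+_i(u) - \psi^-_i(u))/(\bfzeta_i - \bfzeta_i^{-1})$ expressed through \eqref{AZ}; this is the step where the framing term $Z_i(u)$ coming from $R_{T_W}$ and the sign factors in \eqref{formula} (in particular the shifts $a_i^\pm$ of \eqref{apm}) must conspire correctly. I would verify this by pushing both sides to $\calA_T$, using the bundle structure of $\calR_{\pm\delta_i}\to\Gr_{\pm\delta_i}$ and the explicit tautological classes \eqref{L}; the case $i\neq j$ reduces to checking that the minuscule strata are disjoint after convolution, so the commutator vanishes. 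For (h), the Serre relations, I would follow the reduction strategy used in \cite[\S 6]{FT19}: pass to the (non-shifted) quantum loop group via the projection to an auxiliary Coulomb branch with modified $\mu$, where the relations are already known to hold by \cite{CWa,BFNb}; the shift only affects $\phi^\pm_i$ and the $Z_i(u)$-part and so does not interfere. The bipartite structure of $Q$ and the careful choice of $\sigma_i$ in \eqref{sigma} ensure that the arrow contributions from \eqref{weightsN} match the $\bfzeta_i^{c_{ij}}$-weighting of the Serre relators.

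Finally, all computations are $R_{\bGm\times T_W}$-linear by construction, and $R$-integrality of the image is visible because $E_{i,r}, F_{i,r}, A^\pm_{i,\pm r}$ are literal classes of coherent sheaves or of representations of $\tbG$, hence lie in $\calA_{\mu,R}^\lambda$ (and not merely in a localization). The upshot is a proof that mirrors the structure of \cite[thm.~8.1]{FT19} and \cite[thm.~5.6]{NW23}, the new content being the simultaneous bookkeeping of the symmetrizers $d_i$, the framing $W$, and the bipartite orientation of $Q$.
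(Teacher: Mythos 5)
Your overall architecture (reduce everything to the torus by localization, then compute with equivariant Euler classes) is sound, but two steps as written do not close. First, the Serre relations. You propose to deduce them by passing to an auxiliary non-shifted Coulomb branch where they are ``already known to hold by \cite{CWa}, \cite{BFNb}.'' Those references treat symmetric $\bfc$ (and \cite{BFNb} works in homology); for non-symmetric $\bfc$ with symmetrizers the K-theoretic relations are precisely the new content of this proposition, so this citation is circular and the verification would still have to be carried out somewhere. Second, integrality: $\bfU_{\bmu,R}$ is generated by the divided powers $(x^\pm_{i,n})^{[m]}$ and by $h_{i,\pm m}/[m]_{\bfzeta_i}$, not by $\bfE_{i,r}$, $\bfF_{i,r}$ alone, so observing that $E_{i,r}$, $F_{i,r}$, $A^\pm_{i,\pm r}$ are honest classes of coherent sheaves does not yet show that the image of the integral form lands in $\calA_{\mu,R}^\lambda$ rather than only after extension of scalars to $F$. (Uniqueness, by contrast, is fine: $\calA_{\mu,R}^\lambda$ is free over $R_{\bGm\times T_W}$ by Proposition \ref{prop:basic}(a), hence torsion-free, so a homomorphism over $R$ is determined by its scalar extension.)

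The paper's proof avoids verifying any relation geometrically. It introduces the localized difference-operator algebra $\widetilde\calA_R$ and proves two things: Lemma \ref{lem:A1}, quoted from \cite[thm.~7.1]{FT19}, asserts that the explicit GKLO-type rational formulas define a homomorphism $\bfU_{\bmu,F}\otimes R_{T_W}\to\widetilde\calA_F$, so all relations --- including the Serre relations and the $[x^+,x^-]$ commutator --- are discharged once and for all on the algebraic side; Lemma \ref{lem:A2} computes, via the fixed-point embedding $\bfr=z^*\circ(\iota_*)^{-1}$ and the character list \eqref{weightsN}, the images in $\widetilde\calA_R$ of the classes $(\scrS_i)^{\otimes N}$ and $(\scrQ_i)^{\otimes N}$. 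Existence of $\Phi_{\mu,R}^\lambda$ then follows by matching these expressions with \eqref{formula}: the GKLO homomorphism carries the generators into the image of $\calA_{\mu,R}^\lambda$ inside $\widetilde\calA_R$. Your plan to ``push both sides to $\calA_T$'' is the same localization idea, but used only as a computational aid for each relation separately; the efficient move is to recognize the localized target as the algebra in which the quantum-group relations are already established, so that only the generator-by-generator comparison remains. If you insist on the direct route, relations (g) and (h) must be verified as rational identities in $\widetilde\calA_F$ for general symmetrizers, which is exactly the computation you are trying to avoid.
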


\subsubsection{Surjectivity of the map $\Phi$}\label{sec:Phisurj}

Composing the map $\Phi$ and the specializations at $\rho$ and $\zeta$ we get an algebra homomorphism
\begin{align}\label{Phi1}
\Phi:\bfU_\bmu\to \calA_\mu^\rho
\end{align} 
which takes the commutative subalgebra $\bfU_\bmu^0$ of $\bfU_\bmu$ onto
the commutative subalgebra $\calA_\mu^{\rho,0}$ of $\calA_\mu^\rho$ considered in \eqref{CL}.
The goal of this section is to prove the following.

\begin{Proposition}\label{prop:onto}
The map $\Phi:\bfU_\bmu\to \calA_\mu^\rho$ is surjective.
\end{Proposition}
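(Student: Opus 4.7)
The strategy is to exploit the filtration $\calA_\mu^\rho = \bigcup_{\gamma \in \Lambda^+} \calA_\mu^\rho[\leqslant \gamma]$ from Proposition \ref{prop:basic}(c) and show, by induction on $\gamma$ in the dominance order, that the image of $\Phi_\mu^\rho$ contains an element with leading term in each graded piece
\begin{align*}
\gr_\gamma \calA_\mu^\rho \cong (R_T)^{\widehat\W_\gamma} \otimes [\scrO_{\calR_\gamma}].
\end{align*}
The base case $\gamma = 0$ is free of charge: by hypothesis $\Phi_\mu^\rho$ maps $\bfU_\bmu^0$ onto $\calA_\mu^{\rho,0} \cong R_G$, which in turn controls the $R_G$-action on every higher graded piece via Proposition \ref{prop:basic}(d),(e).

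For the minuscule cocharacters $\gamma = \pm \omega_{i,1}$, I would read off directly from \eqref{formula} that $F_{i,r} \in \calA_\mu^\rho[\leqslant \omega_{i,1}]$ has leading term, up to a unit in $R_G$, equal to $\bfzeta_i^{2(r-a_i^+)}[\scrQ_i]^{r-a_i^+} \otimes [\scrO_{\calR_{\omega_{i,1}}}]$, and similarly $E_{i,r}$ sits in $\calA_\mu^\rho[\leqslant \omega_{i,-1}]$ with leading term involving $[\scrS_i]^{r+a_i}$. Since the tautological line bundles $[\scrQ_i]$ and $[\scrS_i]$ generate the Picard groups of the minuscule cells $\calR_{\omega_{i,\pm 1}}$, varying $r \in \bbZ$ together with the $R_G$-action exhausts the full coefficient ring $(R_T)^{\widehat\W_{\omega_{i,\pm 1}}}$, settling the minuscule cases.

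For higher dominant $\gamma$, I would first handle the cocharacters that are non-negative $\bbZ$-combinations of the $\omega_{i,1}$'s (and symmetrically the $-\omega_{i,1}$'s) by taking products of the images $F_{i_1,r_1} * \cdots * F_{i_n,r_n}$: the sign condition of Lemma \ref{lem:multgrA}(c) is automatically satisfied, so the multiplicative constants $a_{\gamma,\eta}$ equal $1$ and the leading terms hit $[\scrO_{\calR_\gamma}]$ for every $\gamma$ with $\gamma_i \in \bbZ_{\geq 0}\omega_{i,1}$ for all $i$. The remaining fundamental coweights $\omega_{i,r}$ with $2 \leq r \leq a_i-1$, which lie strictly below $r\omega_{i,1}$ in the dominance order, are reached via the non-commutativity of $\calA_\mu^\rho$: commutators such as $F_{i,r_1} * F_{i,r_2} - F_{i,r_2} * F_{i,r_1}$ vanish in $\gr_{2\omega_{i,1}}$ (since the associated graded product is symmetric in $\gamma,\eta$) and have leading term supported on the strictly lower cell $\calR_{\omega_{i,2}}$, reflecting the wedge structure of the convolution algebra on the affine Grassmannian of $\GL(V_i)$. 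Iterating this antisymmetrization produces $[\scrO_{\calR_{\omega_{i,r}}}]$ for all $r$, and a final round of products then reaches every dominant $\gamma$.

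The main obstacle is the antisymmetrization step producing the non-minuscule fundamental cells $\omega_{i,r}$ for $r \geq 2$, which requires an explicit computation of the sub-leading terms of $F_{i,r_1} * F_{i,r_2}$ in the convolution product. This parallels the surjectivity arguments of \cite[thm.~8.1]{FT19} in the ADE case and \cite[thm.~5.6]{NW23} for truncated shifted Yangians, but needs careful bookkeeping of the symmetrizer $\bfd$ as it enters the weight formulas \eqref{sigma} and \eqref{weightsN}, and of the integrality condition on $\rho$ which controls the specialization $(-)|_\rho$.
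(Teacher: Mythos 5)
Your overall strategy coincides with the paper's: use the filtration of Proposition \ref{prop:basic}(c), observe that $\bfU_\bmu^0$ surjects onto $\calA_\mu^{\rho,0}$, identify the images of $E_{i,r}$, $F_{i,r}$ as line-bundle twists of $[\scrO_{\calR_{\omega_{i,\pm1}}}]$, and then reduce generation of all of $\calA_\mu^\rho$ to generation of the fundamental classes $[\scrO_{\calR_\gamma}]$ for $\gamma$ running over generators of the dominant cone. Two points, one minor and one serious.

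The minor point: your ``final round of products'' is doing more work than you acknowledge. The dominant cone $\Lambda^+$ is not generated as a semigroup by cocharacters with each $\gamma_i$ a multiple of $\omega_{i,1}$; one needs the full set $\{\omega_{i,\pm r}\,;\,r\in[1,a_i]\}$. The paper handles this by decomposing $\Lambda^+$ into integral chambers (following \cite{We19}), checking that each chamber is a semigroup generated by a subset of the $\omega_\bfr$, and that within a chamber the structure constants $a_{\gamma,\eta}$ of Lemma \ref{lem:multgrA} equal $1$, so that products of the chamber generators produce every $[\scrO_{\calR_\gamma}]$. You should make this combinatorial step explicit.

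The serious point is the one you yourself flag as ``the main obstacle'': producing $[\scrO_{\calR_{\omega_{i,r}}}]$ for $2\leqslant r\leqslant a_i$ from the minuscule classes. Your proposed route via the commutator $F_{i,r_1}*F_{i,r_2}-F_{i,r_2}*F_{i,r_1}$ only tells you that this element lies in $\calA_\mu^\rho[\leqslant\!\omega_{i,2}]$, since its symbol in $\gr_{2\omega_{i,1}}$ vanishes; it does not tell you that its symbol in $\gr_{\omega_{i,2}}$ is an $R_G$-unit multiple of $[\scrO_{\calR_{\omega_{i,2}}}]$, nor even that it is nonzero, and you give no computation of the sub-leading term. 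As written the induction therefore does not close. The paper replaces this step by Lemma \ref{lem:G1}, which is not an antisymmetrization but an exact convolution identity
\begin{align*}
[\calO_{\calR_{\omega_{i,r+1}}}]=\sum_{s=0}^r(-1)^s\bfzeta^{-2s}
[\Wedge^s(V_{i,\calO}/L)\otimes\calO_{\calR_{\omega_{i,r}}}]*[\det(V_{i,\calO}/L)^{-s}\otimes\calO_{\calR_{\omega_{i,1}}}],
\end{align*}
proved by exhibiting the incidence variety $\Gr_Z\subset\Gr_{\omega_{i,r}}\ttimes\Gr_{\omega_{i,1}}$ as the zero locus of a regular section of a rank $r$ bundle and resolving its structure sheaf by a Koszul complex. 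The coefficients $\Wedge^s(V_{i,\calO}/L)$ and $\det(V_{i,\calO}/L)^{-s}$ lie in the subalgebra generated by $\calA_\mu^{\rho,0}$ acting by left and right multiplication (Proposition \ref{prop:basic}(d),(e)), so the identity gives the desired recursion. To complete your argument you would either need to reproduce such a geometric identity or carry out the sub-leading term computation you defer; without one of these the proof is incomplete at its pivotal step.
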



To prove the proposition we argue as in \cite{We19}. The proof consists of several steps.
Let $B^-\subset G$ be the Borel subgroup of lower triangular matrices.
Set $X=G/B^-$. 
The affine nil-Hecke algebra is the complexified Grothendieck group $\NH=K^T(X)$ 
equipped with the Kostant-Kumar convolution product. 
For each $w\in\W$, let $X_w=B^-wB^-/B^-$,  $X_{\leqslant w}=\bigcup_{v\leqslant w}X_v$ and
$D_w=[\scrO_{X_{\leqslant w}}]$ in $\NH$.
Let $w_0$ be the largest element in $\W$.

Given a cocharacter $\gamma$ of the torus $T$, let $\W_\gamma$ be the stabilizer of $\gamma$ in the Weyl group $\W$.
Recall that $\Gr_\gamma$ is the $G_\calO$-orbit of the point $[\gamma]\in\Gr$.
Let $\calI\subset G_\calO$ be the Iwahori subgroup containing $B^-$.
Let $\calX=G_K/\calI$ be the affine flag manifold and
$\widehat\W=\W\ltimes\Lambda$ the affine Weyl group. 
We define $\calR^\iw=\calX\times_\Gr\calR$
where the fiber product is
relative to the projections $p:\calR\to\Gr$ and $p:\calX\to \Gr$ as in \eqref{p1}.
For each $w\in\widehat\W$ and each cocharacter $\gamma\in\Lambda$ we set 
${}^I\!\calX_w=\calI w\calI/\calI$ and $\calX_\gamma=\pi^{-1}(\Gr_\gamma)$.
Then, we define ${}^I\!\calX_{\leqslant w}$ and $\calX_{\leqslant\gamma}$ in the obvious way.
Finally, we define $\calR_{w}^\iw$, $\calR_{\gamma}^\iw$, 
${}^I\calR_{\leqslant w}^\iw$ and $\calR_{\leqslant\gamma}^\iw$  by base change from $\calX$.
Let $\calR^\ab=\calR_T$ be as in \S\ref{sec:H2}.
To simplify the notation, in this proof we abbreviate $\calA=\calA_\mu^\lambda$ and 
we set
$$\calA^\ab=K^{\dot T_\calO}(\calR^\ab)|_{\zeta}
,\quad
\calA^\iw=K^{\dot \calI}(\calR^\iw)|_{\zeta}
,\quad
\calP=K^{\dot\calI}(\calR)|_{\zeta}
.$$
Set $R_w^\iw=[\scrO_{{}^I\calR_{\leqslant w}^\iw}]$ in $\calA^\iw$.
We equip $\calA^\ab$ and $\calA^\iw$ with the associative products as in \S\ref{sec:H2}.
The obvious projection $\pi:\calR^\iw\to\calR$ is flat and proper.
Hence the pullback and pushforward by $\pi$ are well-defined in K-theory.
Let $\phi:\calA\to\calP$ be the forgetting of the equivariance.
See  \eqref{RH} and \eqref{zetarho} for more details on the notation.
For each character $\gamma$ and each $f\in (R_T)^{\W_\gamma}$ we define
the element $f\diamond[\scrO_{\calR_{\gamma}}]$ in $\gr\calA$ as in \eqref{MO}.

\begin{Lemma}\label{lem:matrix-algebra}\hfill
\begin{enumerate}[label=$\mathrm{(\alph*)}$,leftmargin=8mm,itemsep=1mm]
\item
There are algebra embeddings $i:\calA^\ab\to\calA$, $j:\calA\to\calA^\iw$ and  $k:\NH\to\calA^\iw$.
\item
There is an idempotent $\bfe\in\NH$ such that 
$\calA^\iw=\calA^\iw\star\bfe\star\calA^\iw$,
$j(\calA)=\bfe\star\calA^\iw\star\bfe$ and $j(1)=\bfe$.
\item
If $a\in\calA$ and $a^\iw\in\calA^\iw$ are such that $\phi(a)=\pi_*(a^\iw)$, then 
$j(a)=a^\iw\star\bfe$.
Further $\phi=\pi_*\circ j$.
\item
We have $\calP\cong\calA^\iw\star\bfe$ as an $(\calA^\iw,\calA)$-bimodule.
Its is a free $\calA$-module of rank $\sharp\W$ and
$\calA^\iw\cong\End_{\calA}(\calP)$. 
\item
We have $k(D_w)=R_w^\iw$ for each $w\in\W$.
\end{enumerate}
\end{Lemma}

\begin{proof}
As in \cite[\S2.5,2.6]{We19}, to which we refer for details.
Note that ${}^I\!\calX_{\leqslant w_0}=\calX_0\cong X$, that ${}^I\calR_{\leqslant w_0}^\iw=\calR_0^\iw$ 
is a trivial vector bundle over $X$. 
Here $0$ is the unit (=zero) in $\Lambda$.
Taking the K-theory we get the subalgebra $\calA_0^\iw$ of $\calA^\iw$. 
Let $z$ and $p$ be the zero section and the projection.
The map $z^*$ is an algebra homomorphism $\calA_0^\iw\to\NH$, see \eqref{z*}.
We have 
$$z^*(R_w^\iw)=z^*p^*(D_w)=D_w,\quad w\in \W.$$
Hence $k=p^*=(z^*)^{-1}$ is an algebra isomorphism $\NH\to\calA_0^\iw$.
The embedding $i$ is the pushforward by the immersion $\iota$ as in \eqref{iota*}.
The embedding $j\circ i$ is the pushforward by the 
closed immersion $\iota^\iw:\calR^\ab\to\calR^\iw$.
Note that the restriction of the map $\pi$ to $\calR^\ab$ is an isomorphism onto $\iota(\calR^\ab)$.
The idempotent $\bfe$ is the full idempotent in $\bbC\W\subset\NH$. It is given by
$$\frac{1}{\sharp\W}D_{w_0}\star\Wedge(T^*X)$$
where $\Wedge$ is as in \eqref{Wedge} and $\star$ is the multiplication in $\NH$.
\end{proof}

For each $r=1,\dots,a_i$ let $w_{i,r}$ be the basic cocharacter of the torus $T$ as in \eqref{fundamental1}.
Set  $w_{i,-r}=-w_0 (w_{i,r})$.
For each cocharacter $\gamma$ of the torus $T$, let $\bfr_\gamma^\ab=[\scrO_{\calR_\gamma^\ab}]$ in $\calA^\ab$.
We abbreviate $\bfr_{i,\pm r}^\ab=\bfr_{w_{i,\pm r}}^\ab$.

\begin{Lemma}\label{lem:Aab}
The algebra $\calA^\ab$ is generated by $R_T$ and the subset 
$$\Sigma^\ab=\{f\diamond \bfr_{i,\pm r}^\ab\,;\,i\in I\,,\,1\leqslant r\leqslant a_i\,,\,f\in R_T\}$$
\end{Lemma}

\begin{proof}
The proof is similar to the proof of \cite[prop.~3.5]{We19}.
By induction we are reduced to prove that, given a cocharacter $\gamma$ and $r=1,\dots,a_i$, the element
$\bfr_\gamma^\ab$ is an $R_{T\times T_W}$-linear combination of the products 
$\bfr_{\pm w_{i, r}}^\ab\star\bfr_{\gamma\mp w_{i, r}}^\ab$.
We may assume that $\gamma\neq 0$. 
Assume that there exists a pair $(i,r)$ such that $\gamma_{ir}> 0.$
Recall that
$\bfr_{w_{i,r}}^\ab\star\bfr_{\gamma-w_{i,r}}^\ab=a_{w_{ir},\gamma-w_{ir}}\,\bfr_\gamma^\ab$
where the coefficient $a_{w_{ir},\gamma-w_{ir}}\in R_{T\times T_W}$ is as in \eqref{a-gamma-eta2}.
If $\gamma_{ir}\leqslant 0$ for all pair $(i,r)$ then we argue in a similar way using instead the relation
$$\bfr_{-w_{i,r}}^\ab\star\bfr_{\gamma+w_{i,r}}^\ab=a_{-w_{ir},\gamma+w_{ir}}\,\bfr_\gamma^\ab$$
A computation using  \eqref{a-gamma-eta2} implies that,
up to the multiplication with invertible elements in $R_T$,
we have $a_{w_{ir},\gamma-w_{ir}}=A_{i,r}B_{i,r}$ with
\begin{align}\label{AB}
\begin{split}
A_{i,r}&=\prod_{j,s\in A}(1-\zeta_i^{2+c_{ij}}\cbfw_{i,r}\cbfw_{j,s}^{-1})\cdot
\prod_{j,s,m\in B}(1-\zeta_j^{-2m-c_{ji}}\cbfw_{i,r}\cbfw_{j,s}^{-1})\\
B_{i,r}&=\begin{cases}
\prod_{t=1}^{l_i}(1-\cbfw_{i,r}^{-1}\cbfz_{i,t})&\ \text{if}\ \gamma_{ir}\leqslant 0,\\
1&\ \text{else}
\end{cases}
\end{split}
\end{align}
where the products run over the following sets
\begin{align*}
A=&\{j\to i\,,\,c_{ji}=-1\,,\,\gamma_{ir}+c_{ij}\gamma_{js} \leqslant 0\}\cup
\{i\to j\,,\,c_{ij}<-1\,,\,\gamma_{ir}+c_{ij}\gamma_{js}\leqslant 0\},\\
B=&\{j\to i\,,\,c_{ji}<-1\,,\,c_{ji}\gamma_{ir}+\gamma_{js}\geqslant m
\,,\,0\geqslant m >c_{ji}\}\cup
\{i\to j\,,\,c_{ij}=-1\,,\,c_{ji}\gamma_{ir}+\gamma_{js}\geqslant m
\,,\,0\geqslant m >c_{ji}\}
\end{align*}
For any tuple $(i,r)$ in the set $\Gamma=\{(i,r)\,;\,\gamma_{i,r}>0\}$,
the polynomial $a_{w_{ir},\gamma-w_{ir}}=A_{i,r}$  involves only 
variables $\cbfw_{j,s}$ with $(j,s)\in\Gamma$.
Thus, to prove that $\bfr_\gamma^\ab$ is an $R_{T\times T_W}$-linear combination of the elements
$\bfr_{w_{j,s}}^\ab\cdot\bfr_{\gamma-w_{j,s}}^\ab$ over all tuples $(j,s)\in\Gamma$, it is enough to check that the polynomials 
$A_{j,s}$ with $(j,s)\in\Gamma$ have no common roots.
Assume that the tuple of non-zero complex numbers $(\check w_{j,s}\,;\,(j,s)\in\Gamma)$ is such a root.
In the symmetric case the polynomial $A_{i,r}$ is a product of monomials of the form $(1-\zeta\,\cbfw_{i,r}\cbfw_{j,s}^{-1})$.
Hence, given any $(i_0,r_0)\in\Gamma$ the vanishing of $A_{i_0,r_0}$ gives an element $(i_1,r_1)\in\Gamma$ such that
$\check w_{i_1,r_1}=\zeta\, w_{i_0,r_0}$ and, by induction, we get a chain of coordinates with 
$\check w_{i_k,r_k}=\zeta^k\,\check w_{i_0,r_0}$ for each $k\in\bbN$. 
This is not possible because the set $\Gamma$ is finite and $\zeta$ is not a root of unity.
In types $BCFG$ we compute all polynomials $A_{i,r}$ using \eqref{AB} and the list of weighted quivers in \S\ref{sec:NSCD}.
In this case $A_{i,r}$ is a product of monomials of the form $(1-\zeta^k\,\cbfw_{i,r}\cbfw_{j,s}^{-1})$ where the integers
$k$ all have the same sign and $k=0$ for at most one vertex $i\in I$. Then, the same argument as in the symmetric case proves the claim.
\end{proof}

\begin{Lemma}\label{lem:Ac}
The algebra $\calA^\iw$ is generated by the subset $ji(\calA^\ab)\cup k(\NH)$.
\end{Lemma}

\begin{proof}
Let $\calD^\iw\subset\calA^\iw$ be the subalgebra  generated by the subset $ji(\calA^\ab)\cup k(\NH)$.
We must prove that $\calD^\iw=\calA^\iw$.
We consider the commutative diagram
\begin{align*}
\xymatrix{
\calA^\ab\ar@{^{(}->}[r]^-i&\calA\ar@{^{(}->}[r]^-j\ar@{_{(}->}[rd]_-\phi&\calA^\iw\ar[d]^-{\pi_*}&\NH\ar@{_{(}->}[l]_-k\\
&&\calP&}
\end{align*}
We equip $\calA$, $\calA^\iw$ and $\calP$ with the filtration given by the $G_\calO$-orbits in $\calR$ and $\calR^\iw$ as in \S\ref{sec:gr}.
The morphisms $j$, $\phi$ and $\pi_*$ preserve these filtrations. Being the $T$-equivariant K-theory of $G$-equivariant spaces,
both $\calA^\iw$ and $\calP$ are equipped with a $\W$-action such that the map $\pi_*$ is $\W$-equivariant and
$\phi(\calA)=\calP^\W$. Now, for each dominant cocharacter $\gamma$ and each $f\in (R_T)^{\W_\gamma}$, we fix a lifting 
$M_{\gamma,f}$ to $\calA_{\leqslant\gamma}\subset\calA$ of the element
$f\diamond [\scrO_{\calR_\gamma}]$ in $\gr\calA$. 
For instance, we may use the Koszul-perverse extension as in \cite{CWb}.
Then $\calA$ is spanned by all elements $M_{\gamma,f}$.
Now, to prove that $\calD^\iw=\calA^\iw$ is enough to prove that $j(\calA)\subset\calD^\iw$
because the algebra $\calA^\iw$ is 
generated by the subset $j(\calA)\cup k(\NH)$ by Lemma \ref{lem:matrix-algebra}.
To do this, we claim that 
\begin{enumerate}[label=$\mathrm{(\alph*)}$,leftmargin=8mm,itemsep=1mm]
\item there is an element $C_{\gamma,f}\in\calD^\iw$ such that 
$\phi(M_{\gamma,f})=\pi_*(C_{\gamma,f})$ modulo lower terms in the filtration,
\item $\pi_*(C_{\gamma,f})\in\phi(\calA)$.
\end{enumerate}
By (a) and Lemma \ref{lem:matrix-algebra} we have $j(M_{\gamma,f})=C_{\gamma,f}\cdot\bfe$ modulo lower terms in the filtration.
By (b) we have $j(M_{\gamma,f})-C_{\gamma,f}\cdot\bfe\in j(\calA)$, hence $j(M_{\gamma,f})-C_{\gamma,f}\cdot\bfe$ is a linear combination
of $j(M_{\eta,g})$'s with $\eta<\gamma$.
Since $C_{\gamma,f}\cdot\bfe\in\calD^\iw$, we deduce by induction that $j(M_{\gamma,f})\in\calD^\iw$ as wanted.

Now, we prove the claim.
Since the map $\pi_*$ is $\W$-equivariant and $\phi(\calA)=\calP^\W$, up to applying the full symmetrizer in $\bbC\W$ to $C_{\gamma,f}$ 
it is enough to prove that there is an element $C_{\gamma,f}\in\calD^\iw$ satisfying  (a).
To do this, let $P_\gamma$ be the stabilizer of the point $[\gamma]\in \Gr$ in $G$. 
Assume that $\gamma$ is dominant. Then $P_\gamma$ is a parabolic subgroup of $G$ containing $B^-$.
We consider the following commutative diagram with Cartesian squares
\begin{align*}
\xymatrix{
\calR^\iw\ar[d]_-\pi&\ar@{_{(}->}[l]\calR_\gamma^\iw\ar[d]_-\pi\ar[r]^-p&\calX_\gamma\ar[d]_-\pi\ar[r]^\kappa&X_\gamma\ar[d]^-\pi\\
\calR&\ar@{_{(}->}[l]\calR_\gamma\ar[r]^-p&\Gr_\gamma\ar[r]^\kappa&Y_\gamma}
\end{align*}
where we have the following isomorphisms of $G$-varieties
\begin{align*}
X_\gamma&= G\cdot z^\gamma \cdot G_\calO/\calI\cong G\times_{P_\gamma}G/B^-\cong G/P_\gamma\times G/B^-,\\
Y_\gamma&=G\cdot z^\gamma\cdot  G_\calO/G_\calO\cong G/P_\gamma.
\end{align*}
The left inclusions are the obvious locally closed immersions. 
The maps $\kappa$ are affine bundles given by the evaluation at zero $G_\calO\to G$.
The maps $\pi$ are the obvious projections.
In particular $\pi:X_\gamma\to Y_\gamma$ is identified with the first projection
$G/P_\gamma\times G/B^-\to G/P_\gamma$.
Recall that
$f\diamond [\scrO_{\calR_\gamma}]=p^*\kappa^*\ind(f)$ in $K^G(\calR_\gamma)$,
 where $\ind:(R_T)^{\W_\gamma}\to K^G(Y_\gamma)$ is the induction.
 The affine nil-Hecke algebra $\NH$ acts on $K^T(X_\gamma)$ by right convolution,
with the terminology in \cite[\S 9]{M22}.
This action is compatible with the right $\NH$-action on $\calA^\iw$. 
Since the diagram above is Cartesian, by base change it is enough to observe that
\begin{enumerate}[label=$\mathrm{(\alph*)}$,leftmargin=8mm,itemsep=1mm]
\item[$\mathrm{(c)}$] there is
 an element $D_{\gamma,f}\in K^T(X_\gamma)$ such that
$\phi(\ind(f))=\pi_*(D_{\gamma,f})$ in $K^T(Y_\gamma)$, 
\item[$\mathrm{(d)}$] there is an element $C_{\gamma,f}\in\calD^\iw\cap\calA_{\leqslant\gamma}^\iw$ 
such that
$C_{\gamma,f}|_{\calR_\gamma^\iw}=p^*\kappa^*(D_{\gamma,f})$ in $K^T(\calR_\gamma^\iw)$. 
\end{enumerate}
See  \cite[lem.~2.10]{We19} for a similar argument.
\end{proof}

We can now prove the proposition.

\begin{proof}[Proof of Proposition $\ref{prop:onto}$]
We define
\begin{align*}
\Sigma&=\calA^0\cup\{f\diamond [\scrO_{\calR_{\omega_{i,\pm 1}}}]\,;\,i\in I\,,\,f\in (R_T)^{\W_{\omega_{i,\pm 1}}}\},\\
\Sigma^\iw&=j(\Sigma)\cup k(\NH).
\end{align*} 
Let $\calB$ be the subalgebra of $\calA$ generated by the subset $\Sigma$, 
and $\calB^\iw$ the subalgebra of $\calA^\iw$ generated by $\Sigma^\iw$.
It is enough to prove that $\calA=\calB$.
We claim that $\calB^\iw=\calA^\iw$.
Then, Lemma \ref{lem:matrix-algebra} yields
$$j(\calA)=\bfe\star\calA^\iw\star\bfe=\bfe\star\calB^\iw\star\bfe\subseteq j(\calB),$$
proving the proposition.
To prove the claim we first check that
\begin{align}\label{Sab-Bc}ji(\Sigma^\ab)\subset\calB^\iw.\end{align}
By Lemma \ref{lem:Aab}, we deduce that $ji(\calA^\ab)\subset\calB^\iw$.
Then, the claim follows from Lemma \ref{lem:Ac}.

Now, we prove \eqref{Sab-Bc}.
Let $\bfr_\gamma=i(\bfr_\gamma^\ab)$ and $\bfr_\gamma^\iw=ji(\bfr_\gamma^\ab)$ be the images of 
$\bfr_\gamma^\ab$ in $\calA$ and $\calA^\iw$.
Since the map $\pi$ restricts to an isomorphism $ji(\calR^\ab)\to i(\calR^\ab)$, we have 
$\phi(\bfr_\gamma)=\pi_*(\bfr_\gamma^\iw)$.
Note that the cocharacter $w_{i,\pm r}$ belongs to the $\W$-orbit of the minuscule cocharacter $\omega_{i,\pm 1}$.
Further, for any dominant minuscule cocharacter $\gamma$ and any cocharacter $\eta$ in the $\W$-orbit $\W(\gamma)$, 
there are elements $g_s\in R_T$ and
$f_s\in(R_T)^{\W_\gamma}$ such that the following identity holds in $\calP$
$$\pi_*(\bfr_\eta^\iw)=\sum_sg_s\otimes\phi(f_s\diamond [\scrO_{\calR_\gamma}])
.$$
Hence, Lemma \ref{lem:matrix-algebra} implies that
$\bfr_\eta^\iw\star\bfe=\sum_sg_s\otimes j(f_s\diamond [\scrO_{\calR_\gamma}])$ in $\calA^\iw$.
Then, a  computation similar to the proof of \cite[prop.~2.13]{We19} yields 
finitely many elements  $x_s,y_s\in R_T$ such that in $\calA^\iw$ we have
$$\bfr_\eta^\iw=\sum_sx_s\star j(f_s\diamond [\scrO_{\calR_\gamma}])\star R_{w_0}^\iw\star y_s,$$
where $x_s,$ $y_s$ are viewed as elements in $\NH$. The claim and the proposition follow.
\end{proof}

\begin{Remark} \label{rem:G2}
For symmetric types, one can prove as in \cite[prop.~3.1]{W19}
that the algebra $\calA_\mu^\rho$ is generated by the elements
$f\diamond [\scrO_{\calR_{\gamma}}]$ with $\gamma$ minuscule,
without proving Proposition \ref{prop:onto} first.
Then, in the proof of Lemma \ref{lem:Ac} it is enough to consider only minuscule dominant cocharacters $\gamma$.
Indeed, the complement of the hyperplanes 
$\{\cw_{i,r}=0\}$ and $\{\cw_{j,s}-\cw_{i,r}=0\}$  with
$(i,r)\neq(j,s)$
is a disjoint union of chambers in $\Lambda\otimes\bbR$, such that 
each closed chamber is a semigroup of the form
$$C=\{\gamma\in\Lambda\otimes\bbR\,;\,
\gamma_{i_1,r_1}\geqslant\gamma_{i_2,r_2}\geqslant\cdots\geqslant\gamma_{i_n,r_n}\geqslant 0\geqslant
\gamma_{i_{n+1},r_{n+1}}\geqslant\cdots\geqslant\gamma_{i_a,r_a}\}$$
with $a=\sum_{i\in I}a_i$ and 
$(i_1,r_1),(i_2,r_2),\dots,(i_a,r_a)$ is an ordering of the elements of the set $\{(i,r)\,;\,i\in I\,,\,r\in[1,a_i]\}$. 
The intersection $\Lambda\cap C$ is called an integral chamber.
Lemma \ref{lem:multgrA} implies that for each integral chamber contained in $\Lambda^+$
and each $\gamma,$ $\eta$ in $\Lambda\cap C$ we have
$$[\scrO_{\calR_\gamma}] \star [\scrO_{\calR_\eta}]=[\scrO_{\calR_{\gamma+\eta}}]$$
in the associated graded $\gr\calA_{\mu}^\lambda$.
Therefore, the  algebra $\calA_{\mu}^\lambda$ is generated by
$\calA_{\mu}^{\lambda,0}$ and the elements $f\diamond [\scrO_{\calR_{\leqslant\gamma}}]$ 
where $\gamma$ runs over a set of generators of the integral chambers contained in $\Lambda^+$.
A set of generators of the semigroup $\Lambda\cap C$ is
$$\Big\{\sum_{l=1}^mw_{i_l,r_l}\,;\,1\leqslant m\leqslant n\Big\}\cup
\Big\{-\sum_{l=m}^aw_{i_l,r_l}\,;\,n+1\leqslant m\leqslant a\Big\}.$$
Thus the integral chambers contained in $\Lambda^+$ 
are all generated by a subset of the set of cocharacters 
$\big\{\sum_{i\in I} \omega_{i,\pm r_i}\,;\,r_i\in[1,a_i]\big\}$.
In particular, the integral chambers contained in $\Lambda^+$ 
are all generated by minuscule cocharacters.
For non symmetric types the generators of the integral chambers  are more involved
and the argument in \cite[prop.~3.1]{W19} does not generalize easily.
\end{Remark}

\subsubsection{The truncated integral category $\scrO$}
\label{sec:HCO}

\begin{Lemma}\label{lem:A0}\hfill
\begin{enumerate}[label=$\mathrm{(\alph*)}$,leftmargin=8mm,itemsep=1mm]
\item
There is an algebra isomorphism $\calA_\mu^{\rho,0}\cong R_G$.
\item
The algebras $\calA_\mu^{\rho,0}$ and
$\calA_\mu^\rho$ are finitely generated and Noetherian.
\item
The set $\calA_\mu^{\rho,0}\star x\star\calA_\mu^{\rho,0}$ is finitely generated both as left and right $\calA_\mu^{\rho,0}$-module
for each $x\in\calA_\mu^\rho$.
\end{enumerate}
\end{Lemma}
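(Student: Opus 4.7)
The plan is to verify each part in order, using the $\Lambda^+$-filtration from Proposition \ref{prop:basic}(c) as the central tool.

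For (a), the map $f\mapsto f\otimes{\bf1}$ from $R_{\tbG\times T_W}$ to $\calA_{\mu,R}^\lambda$ is an $R_{\bGm\times T_W}$-algebra homomorphism by Proposition \ref{prop:basic}(d) (applied to $x=g\otimes{\bf1}$, which gives $(f\otimes{\bf1})*(g\otimes{\bf1})=fg\otimes{\bf1}$). It is injective because $\calR_0\cong\calN_\calO$ is $\tbG_\calO$-equivariantly contractible, so $K^{\tbG_\calO\times T_W}(\calR_0)$ is free of rank one over $R_{\tbG\times T_W}$ on the class ${\bf1}$. Specializing at $\bfzeta=\zeta$ collapses the $\bGm$-factor together with its attached two-fold cover, and specializing at $\zeta^\rho\in T_W$ collapses the $T_W$-factor; what remains is the canonical identification $\calA_\mu^{\rho,0}\cong R_G$.

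For (b), the algebra $\calA_\mu^{\rho,0}\cong R_G$ is a ring of Weyl-invariants in a Laurent polynomial ring, hence a finitely generated commutative Noetherian $\bbC$-algebra. Lemma \ref{lem:G2} shows that $\calA_\mu^\rho$ is generated as an algebra over $\calA_\mu^{\rho,0}$ by the finitely many classes $[\scrO_{\calR_{\omega_{i,\pm 1}}}]$, so it too is finitely generated. For Noetherianity I pass to the associated graded with respect to the $\Lambda^+$-filtration, refined to a $\bbZ_{\geqslant 0}$-filtration via a generic linear height function on $\Lambda^+$. By \eqref{grA} the graded piece in degree $\gamma$ is $(R_T)^{\widehat\W_\gamma}$, which is a finitely generated $R_G$-module. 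By Lemma \ref{lem:multgrA} the multiplication on $\gr\calA_\mu^\rho$ is the commutative multiplication of sections twisted by the scalars $a_{\gamma,\eta}$, which are units at generic $\zeta$; in particular $\gr\calA_\mu^\rho$ is module-finite over the commutative finitely generated $R_G$-subalgebra generated by the classes $[\scrO_{\calR_\gamma}]$ for $\gamma$ in a finite semigroup generating set of $\Lambda^+$. This identifies $\gr\calA_\mu^\rho$ as Noetherian, and the standard lift from graded to filtered Noetherianity then yields that $\calA_\mu^\rho$ is Noetherian.

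For (c), choose $\gamma\in\Lambda^+$ with $x\in\calA_\mu^\rho[\leqslant\gamma]$. Since $\calA_\mu^{\rho,0}$ sits in filtration degree $0$, both the left action by Proposition \ref{prop:basic}(d) and the right action by Proposition \ref{prop:basic}(e) (tensor product by the induced bundle $\ind(f)$ supported on $\calR_0$) preserve the filtration, so $\calA_\mu^{\rho,0}*x*\calA_\mu^{\rho,0}\subset\calA_\mu^\rho[\leqslant\gamma]$. The right-hand side is a finite direct sum $\bigoplus_{\eta\leqslant\gamma}\calA_\mu^\rho[\eta]$ of $R_G$-modules, each summand isomorphic to $(R_T)^{\widehat\W_\eta}$ and thus finitely generated over the Noetherian ring $R_G$. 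Hence any $R_G$-submodule of $\calA_\mu^\rho[\leqslant\gamma]$ is finitely generated, in particular so is $\calA_\mu^{\rho,0}*x*\calA_\mu^{\rho,0}$ from either side.

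The main obstacle is the Noetherianity step in (b): one must control the twisting scalars $a_{\gamma,\eta}$ precisely enough to identify $\gr\calA_\mu^\rho$ with a module-finite extension of a finitely generated commutative $R_G$-subalgebra. The aligned-chamber vanishing $a_{\gamma,\eta}=1$ recorded in Lemma \ref{lem:multgrA}(b)(c) is exactly the input that allows this reduction, by pinning down the behavior of products $[\scrO_{\calR_\gamma}]*[\scrO_{\calR_\eta}]$ on the semigroup generators of $\Lambda^+$ so that $\gr\calA_\mu^\rho$ becomes, up to scalar units, a twisted polynomial extension of its degree-zero part.
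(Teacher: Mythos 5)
Your treatment of (a) matches the paper's (which simply invokes Proposition \ref{prop:basic} and \eqref{CL}), and your direct argument for (c) --- that $\calA_\mu^{\rho,0}*x*\calA_\mu^{\rho,0}$ sits inside $\calA_\mu^\rho[\leqslant\!\gamma]$, whose associated graded is a finite sum of $R_G$-submodules of $R_T$, hence is a finitely generated module over the Noetherian ring $R_G$ --- is correct and is in fact more explicit than the paper, which only says ``(c) follows from (b).''

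For (b) you take a genuinely different route. The paper does not work with the associated graded of the quantum algebra at $\bfzeta=\zeta$ at all: it uses Proposition \ref{prop:basic}(a) to view $\calA_{\mu,R}^\rho$ as a flat deformation of the \emph{commutative} specialization $\calA_{\mu,1}^\rho$ at $\bfzeta=1$, proves finite generation of that commutative algebra by the chamber argument of \cite[prop.~6.8]{BFNa} (the same argument underlying Lemma \ref{lem:G2}), concludes it is Noetherian because it is commutative and finitely generated, and then lifts both properties along the deformation. Your approach buys a statement intrinsic to the quantum algebra, but it has two soft spots exactly where the paper's detour through $\bfzeta=1$ is designed to help. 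First, the $a_{\gamma,\eta}$ of Lemma \ref{lem:multgrA} are not scalars and are not units for generic $\zeta$: by \eqref{weight1} they are products of factors of the form $1-\bft_n^{-1}$, i.e.\ non-invertible elements of $R_{\tbT\times T_W}$, so ``twisted by unit scalars'' is not available; you must (as you then do) fall back on the aligned-chamber identities $a_{\gamma,\eta}=1$ of Lemma \ref{lem:multgrA}(b),(c). Second, the module-finiteness of $\gr\calA_\mu^\rho$ over the subalgebra generated by $R_G$ and the chamber generators is not immediate: the degree-$\gamma$ piece is $(R_{\tbT\times T_W})^{\widehat\W_\gamma}\otimes[\scrO_{\calR_\gamma}]$, and since Proposition \ref{prop:basic}(d) only gives left $R_{\tbG\times T_W}$-linearity, the products of chamber-generator classes account for $R_G\otimes[\scrO_{\calR_\gamma}]$ but not a priori for all of $(R_T)^{\widehat\W_\gamma}\otimes[\scrO_{\calR_\gamma}]$; one must enlarge the generating set by module generators of $(R_T)^{\widehat\W_{\gamma_k}}$ over $R_G$ for each chamber generator $\gamma_k$ and track these through the products, which is precisely the bookkeeping carried out in \cite[prop.~6.8]{BFNa}. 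With those two points repaired your argument goes through, so the gap is one of execution rather than of strategy.
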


\begin{proof}
Part (a) follows from Proposition \ref{prop:basic} and \eqref{CL}, using the identification
Part (c) follows from (b).
From (a) we deduce that the algebra $\calA_\mu^{\rho,0}$ is finitely generated and Noetherian,
because the elements $A_{i,r}^\pm$ generate $\bfU_\bmu^0$.
Finally, the argument in \cite[prop.~6.8]{BFNa}  goes through in our setting and proves that the algebra 
$\calA_\mu^\rho$ is finitely generated
and Noetherian, proving Part (b).
More precisely, Proposition \ref{prop:basic} implies that the $R$-algebra $\calA_{\mu,R}^\rho$ 
 is a flat deformation of $\calA_{\mu,1}^\rho$ 
which is commutative, hence Noetherian if it is finitely generated. 
Hence, the quotient algebra $\calA_\mu^\rho$ is finitely generated and Noetherian if
$\calA_{\mu,1}^\rho$ is finitely generated, which is proved as in \cite[prop.~6.8]{BFNa}.
\end{proof}

Let $\bfC^{(\alpha)}$ be the set of all $I$-colored multisets in $\bbC^\times$ of length $\alpha$.
Let $\Max(R_G)$ be the maximal spectrum of $R_G$. 
There is an obvious bijection 
\begin{align}\label{max}
\bfC^{(\alpha)}\cong\Max(R_G).\end{align}
Using the same notation as in \eqref{tau-gamma}, by \eqref{evenrhogamma} there is a map 
\begin{align}\label{sym}
{}^0\!\Lambda_\alpha\to\bfC^{(\alpha)}
,\quad
\gamma\mapsto
\zeta^{2\gamma}=\{\zeta^{2d_i\gamma_{i,r}}\,;\,i\in I\,,\,r\in [1,a_i]\}.
\end{align}
Let ${}^0\!\Max(R_G)$ be the image of ${}^0\!\Lambda_\alpha$  
by the map composed of $\eqref{max}$ and $\eqref{sym}$.
Let 
\begin{align}\label{mgamma}
\m_{\rho,\gamma}\in{}^0\!\Max(R_G)
\end{align} 
be the image of $\gamma\in{}^0\!\Lambda_\alpha$.
Let ${}^0\!\Max(\calA_\mu^{\rho,0})$ be the inverse image of ${}^0\!\Max(R_G)$ by the bijection 
\begin{align}\label{mm}\Max(\calA_\mu^{\rho,0})\cong \Max(R_G),\quad \bfm\mapsto\m\end{align}
in Lemma \ref{lem:A0}.
Let $\bfm_{\rho,\gamma}$ be the pre-image of $\m_{\rho,\gamma}$ by the map \eqref{mm}.
For each $\bfm\in\Max(\calA_\mu^{\rho,0})$ and each $\calA_\mu^\rho$-module $M$ we set
$$\bfW_\bfm(M)=\{x\in M\,;\,\exists k\,,\,\bfm^kx=0\}.$$
We abbreviate
\begin{align}\label{Wgamma}\bfW_{\rho,\gamma}(M)=\bfW_{\bfm_{\rho,\gamma}}(M).
\end{align}
We call $\bfW_{\rho,\gamma}(M)$ the $\ell$-weight subspace of $M$ of $\ell$-weight $\bfm_{\rho,\gamma}$.
We call  $M$ an Harish-Chandra  module if it is the sum of the subspaces
$\bfW_\bfm(M)$ as $\bfm$ runs over the maximal spectrum
$\Max(\calA_\mu^{\rho,0})$. 
We say that $M$ is even if  $\bfW_\bfm(M)=0$ unless 
$\bfm\in{}^0\!\Max(\calA_\mu^{\rho,0})$.
For each $\gamma,$ $\eta$ in ${}^0\!\Lambda_\alpha$ we set
\begin{align}\label{Agammaeta}
\calA_\mu^\rho(\gamma,\eta)=\lim_k\calA_\mu^\rho\,/\,(\bfm_{\rho,\gamma}^k\star\calA_\mu^\rho+
\calA_\mu^\rho\star\bfm_{\rho,\eta}^k).
\end{align}
By Lemma \ref{lem:loc2} below, we have
$$\calA_\mu^\rho(\gamma,\eta)\neq 0\Rightarrow\gamma,\eta\in{}^0\!\Lambda_\dalpha\ \text{for\ some}\ \dalpha.$$
Let $\dmu$ be as in \eqref{dlam1}. We set
\begin{align}\label{Amu'}
\calA_{\dmu}^{\rho,\wedge}=\bigoplus_{\gamma,\eta\in{}^0\!\Lambda_\dalpha^+}\calA_\mu^\rho(\gamma,\eta)
\end{align}
Taking the sum over the set of all refinements $\dalpha$ of $\alpha$,  we set
\begin{align}\label{Amu}
\calA_\mu^{\rho,\wedge}=\bigoplus_{\dmu}\calA_{\dmu}^{\rho,\wedge}.\end{align}
Following the terminology in \cite{DFO94},
the algebra $\calA_\mu^{\rho,0}$ is an Harish-Chandra subalgebra of $\calA_\mu^\rho$, hence
the multiplication in $\calA_\mu^\rho$ equips $\calA_\mu^{\rho,\wedge}$ with a complete topological algebra structure.
For each $\gamma$ in ${}^0\!\Lambda_\dalpha^+$, the unit of the summand $\calA_\mu^\rho(\gamma,\gamma)$
is an idempotent.
Let 
\begin{align}\label{eA}\bfe_{\rho,\gamma}\in\calA_\mu^\rho(\gamma,\gamma)\end{align} 
denote this idempotent.
Then, we have
\begin{align}\label{egamma}
\calA_\mu^\rho(\gamma,\eta)=\bfe_{\rho,\gamma}\star\calA_\mu^{\rho,\wedge} \star\bfe_{\rho,\eta}.
\end{align} 
For each module $M$ the $\calA_\mu^\rho$-action yields a map
$$\calA_\mu^\rho(\gamma,\eta)\times \bfW_{\rho,\eta}(M)\to \bfW_{\rho,\gamma}(M).$$
This yields a smooth representation of the topological algebra $\calA_\mu^{\rho,\wedge}$ on the sum 
\begin{align}\label{WM}\bfW(M)=\bigoplus_{\gamma}\bfW_{\rho,\gamma}(M),\end{align}
i.e., a representation with a continuous action where $\bfW(M)$ is given the discrete topology, such that
\begin{align}\label{egamma}\bfe_{\rho,\gamma} \bfW(M)=\bfW_{\rho,\gamma}(M).
\end{align}
The sum in \eqref{WM} runs over the set of all refinements $\dalpha$ of $\alpha$ and all $\gamma\in{}^0\!\Lambda_\dalpha^+$.
Since the morphism $\Phi:\bfU_\bmu\to\calA_\mu^\rho$ is surjective, 
the pullback yields an embedding of categories 
\begin{align}\label{Phi*}\Phi^*:\calA_\mu^\rho\text{-}\mod\subset\bfU_\bmu\text{-}\mod.\end{align}
For any even coweight $\gamma\in{}^0\!\Lambda_\alpha$ we abbreviate
\begin{align}\label{p1gamma}
\begin{split}
\|\gamma_i\|&=\gamma_{i,1}+\gamma_{i,2}+\cdots+\gamma_{i,a_i},\\
\|\gamma\|&=\sum_{i\in I}d_i\|\gamma_i\|
\end{split}
\end{align}

\begin{Definition}\label{def:O2}
We define the truncated integral categories ${}^0\scrO_\bmu^\rho$ and ${}^0\scrO^\rho$ to be
the subcategories of ${}^0\scrO_\bmu$ and ${}^0\scrO$ given by
\begin{align}\label{Oint2}{}^0\scrO_\bmu^\rho={}^0\scrO_\bmu\cap\Phi^*(\calA_\mu^\rho\text{-}\mod)
,\quad
{}^0\scrO^\rho=\bigoplus_\bmu{}^0\scrO_\bmu^\rho.\end{align}
\end{Definition}

\begin{Proposition}\label{prop:HC}
The functor $\bfW$ is an equivalence from ${}^0\scrO_\bmu^\rho$
to the category of finitely generated smooth $\calA_\mu^{\rho,\wedge}$-modules
with finite dimensional $\ell$-weight spaces such that
the set $\{\|\gamma\|\,;\,\bfW_{\rho,\gamma}(M)\neq 0\}$ is bounded above
\end{Proposition}

\begin{proof}
By \eqref{A0} and Theorem \ref{thm:Main1} below, we have
\begin{align}
\label{weight}
\Phi(\phi_i^+)\star\bfe_{\rho,\gamma}=
\zeta_i^{\|\gamma_i\|}\,\bfe_{\rho,\gamma}
\end{align}
Further the element $\phi_i^+$ of $\bfU_\bmu$
is a quantum analogue of the $i$th fundamental coweight, see \S\ref{sec:SQG}.
Hence, the claim follows from \cite[thm.~17]{DFO94}, which relates the category 
of smooth $\calA_\mu^{\rho,\wedge}$-modules with the category of $\calA_\mu^{\rho}$-modules
which are sum of their $\ell$-weight spaces.
\end{proof}

\begin{Remark}
The map $\Phi$ restricts to a surjection $\bfU_\bmu^0\to\calA_\mu^{\rho,0}$ by
Proposition \ref{prop:onto}. 
By \eqref{formula} its kernel contains the  elements
$
\bfA_{i,\pm s}^\pm
$,
$
\bfA_{i,0}^+\bfA_{i,a_i}^+-(-1)^{a_i}
$
and
$
\bfA_{i,-r}^--\zeta_i^{a_i}\bfA_{i,a_i-r}^+
$
with
$
0\leqslant r\leqslant a_i<s.
$
\end{Remark}

\subsection{Localization of Coulomb branches}

Proposition \ref{prop:HC} relates the integral truncated shifted category ${}^0\scrO_\bmu^\rho$ and the completion
$\calA_\mu^{\rho,\wedge}$ in \eqref{Amu} of the quantized Coulomb branch $\calA_\mu^\rho$ introduced in \eqref{zetarho}.
The goal of this section is to compute the topological algebra $\calA_\mu^{\rho,\wedge}$.
The main result is Theorem \ref{thm:Main1}, whose proof relies on Lemmas \ref{lem:r}, \ref{lem:loc1} and \ref{lem:loc2}.

\subsubsection{The localization theorem}\label{sec:LCB1}
Let $V$, $W$ be finite dimensional  $I$-graded vector spaces.
Let $\alpha$, $\lambda$, $\mu$ be as in \eqref{lam1}, and $\rho\in{}^0\!\Lambda_\lambda$.
We define the $\dI$-graded vector space $\dW$ as in \eqref{VW'},
and the sequence of $\dI$-graded vector spaces $(\dW^k)$ as in \eqref{dbfVW}.
Let $\dlambda$ and $\dw=(\dw_k)$ be given by
\begin{align}\label{lambdaW}
\dlambda=\dim_{\dI}\dW
,\quad
\dw_k=\dim_{\dI}\dW^k.
\end{align}
Fix an $\dI$-graded vector space $\dV$ of dimension $\dalpha$ which refines the $I$-graded vector space $V$.
We define the dimension vector $\dmu$ as in \eqref{dlam1}.
Recall the $\dG$-scheme
${}^0\scrX_{\dmu}^{\dw}$ introduced in \eqref{0X}, and the $\bbZ$-weighted QH algebras 
${}^0\widetilde\scrT_{\dmu}^{\dw}$ and ${}^0\scrT_{\dmu}^{\dw}$ introduced in \eqref{0tT} and \eqref{0T1}.
Now, let $\dalpha$ run over the set of all dimension vectors in $\bbN \dI$ which refine $\alpha$.
Set
\begin{align}\label{0Tbmu}{}^0\scrT_\bmu^{\dw}=\bigoplus_{\dmu}{}^0\scrT_{\dmu}^{\dw}
,\quad
{}^0{\widetilde\scrT_\bmu}^{\dw}=\bigoplus_{\dmu}{}^0{\widetilde\scrT_\dmu}^{\dw}\end{align}
We abbreviate
\begin{align}\label{Trho}
{}^0\scrX_{\dmu}^\rho={}^0\scrX_{\dmu}^{\dw}
,\quad
{}^0\widetilde\scrT_{\dmu}^\rho={}^0\widetilde\scrT_{\dmu}^{\dw}
,\quad
{}^0\scrT_{\dmu}^\rho={}^0\scrT_{\dmu}^{\dw}
,\quad\text{etc}.
\end{align}
We define the category of nilpotent modules ${}^0\scrT_\bmu^\rho\-\nilp$ as in Definition \ref{def:0T}.
Our goal in this section is to compare the category ${}^0\scrO_\bmu^\rho$ with the category 
${}^0\scrT_\bmu^\rho\-\nilp$.
Before to go on, we briefly recall the definition of the $\bbZ$-weighted QH algebra.
The affine group $\dG$ is as in \eqref{G'}, and
the affine $\dG$-variety $N_\dmu^\dlambda$ as in \eqref{dN}.
Recall the set
${}^0\P(\dalpha)$ of even sequences  of dimension vectors in $\bbN\dI$ with sum $\dalpha$ in \eqref{0Palpha}, and
the sets of coweights ${}^0\!\Lambda_\dalpha$ and ${}^0\!\Lambda_\dalpha^+$ in \eqref{0Lalpha}.
Following \eqref{Xrhogamma}, we abbreviate
$\scrX_\gamma^\rho=\scrX_{\dv_\gamma}^{\rho}.$
Given two even coweights $\gamma$ and $\eta$ in ${}^0\!\Lambda_\dalpha^+$ we consider the  fiber product
\begin{align}\label{Z}
\scrZ_{\gamma,\eta}^\rho=\scrX_\gamma^\rho\times_{N_\dmu^\dlambda}\scrX_\eta^\rho.\end{align}
Taking the disjoint union over all  $\gamma$, $\eta$ we get the following scheme
$${}^0\scrZ_{\dmu}^\rho=\bigsqcup_{\gamma,\eta\in{}^0\!\Lambda_\dalpha^+}\scrZ_{\gamma,\eta}^\rho.$$
Finally, applying Definition \ref{QH} to ${}^0\scrZ_{\dmu}^\rho$ yields the algebras 
\begin{align}\label{KR'}
{}_\K^0\widetilde\scrT_{\dmu}^\rho= K^{\dG}({}^0\scrZ_{\dmu}^\rho)
,\quad
{}^0\widetilde\scrT_{\dmu}^\rho= H_\bullet^{\dG}({}^0\scrZ_{\dmu}^\rho,\bbC).
\end{align}
Let ${}^0\widetilde\scrT_\bmu^{\rho,\wedge}$ be the nilpotent completion of 
${}^0\widetilde\scrT_\bmu^\rho$, 
and ${}_\K^0\widetilde\scrT_\bmu^{\rho,\wedge}$ the unipotent completion of 
${}_\K^0\widetilde\scrT_\bmu^\rho$, as in \S\ref{sec:Chern}.
Let 
\begin{align}\label{eQH2}e_{\rho,\gamma}=e_{\dw,\dv}\end{align} 
denote the idempotent in ${}^0\widetilde\scrT_\bmu^\rho$ given in \eqref{eQH1}, with  $\dw=\dw_\rho$ and $\dv=\dv_\gamma$. 
For each even coweight
$\gamma\in{}^0\!\Lambda_\dalpha^+$, each integer $r\in\bbN$ and each vertex $i\in I$, 
let $e_r(\zeta_i^{2\gamma_i})$ be the evaluation 
of the $r$th elementary symmetric function $e_r$ at the $a_i$-tuple
$\zeta_i^{2\gamma_i}$
given by \eqref{zetagamma} with $\tau=\zeta$.
Our goal  is to prove the following.

\begin{Theorem}\label{thm:Main1}\hfill
\begin{enumerate}[label=$\mathrm{(\alph*)}$,leftmargin=8mm,itemsep=1mm]
\item
There is a topological algebra isomorphism 
$\Theta:\calA_\mu^{\rho,\wedge}\to{}^0\widetilde\scrT_\bmu^{\rho,\wedge}$
which maps the element $A_{i,r}^+\star\bfe_{\rho,\gamma}$ to
$(-1)^r\zeta_i^{-\|\gamma_i\|}e_r(\zeta_i^{2\gamma_i})\,e_{\rho,\gamma}$
plus some topologically nilpotent terms
for each $\gamma\in{}^0\!\Lambda_\dalpha^+$, $i\in I$ and  $r\in\bbN$.
\item
The pushforward along the map $\Theta$ yields an equivalence of categories 
$\Theta_*:{}^0\scrO_\bmu^\rho\to{}^0\scrT_\bmu^\rho\-\nilp$.
\end{enumerate}
\end{Theorem}

\subsubsection{Preliminary lemmas}
In this section we gather some material to prove Theorem $\ref{thm:Main1}$.
Fix an even coweight $\gamma\in{}^0\!\Lambda$.
Recall the fixed point sets $\Gr^{\tilde\gamma}$ and $\calT^{\dot\gamma}$ introduced in \eqref{fpdg}.
We factorize the embedding of the fixed point locus 
$\calT^{\dot\gamma}$ into $\calT$ in the following way
\begin{align*}
\xymatrix{\calT^{\dot\gamma}\ar[r]^-{\sigma_\gamma}&\Gr^{\tilde\gamma}\times_{\Gr}\calT\ar[r]^-{\iota_\gamma}&\calT}
\end{align*}
Following the notation \eqref{tau-gamma}, let $\zeta^{2\gamma}$, $\zeta^{2\tilde\gamma}$ be the elements of 
$T$, $\widetilde T$ given by the
tuples $(\zeta_i^{2\gamma_i})$ and $(\zeta_i^{2\gamma_i},\zeta^2)$.
Recall that $(G_K)^{\tilde\gamma}$ and $(G_\calO)^{\tilde\gamma}$ are the stabilizers 
in $G_K$ and $G_\calO$ of the semisimple element  $\zeta^{2\tilde\gamma}$, see \eqref{GFP1}.
By \eqref{lim} and \eqref{sec:B.i}, we have
\begin{align*}
\Coh^{(\dot G_\calO)^{\tilde\gamma}}(\Gr^{\tilde\gamma}\times_{\Gr}\calT)
&=\colim_{n,\ell}\Coh^{(\dot G_\calO)^{\tilde\gamma}}(\Gr_n^{\tilde\gamma}\times_{\Gr_n}\calT_n^\ell),\\
\Coh^{(\dot G_\calO)^{\tilde\gamma}}(\calT^{\dot\gamma})
&=\colim_{n,\ell}\Coh^{(\dot G_\calO)^{\tilde\gamma}}\llp\calT_n^\ell)^{\dot\gamma})
\end{align*}
The map $\sigma_\gamma$ has coherent pullbacks, since the inclusion
$$(\calT_n^\ell)^{\dot\gamma}\subset\Gr_n^{\tilde\gamma}\times_{\Gr_n}\calT_n^\ell$$
is an embedding of vector bundles over $\Gr_n^{\tilde\gamma}$.
Both stacks 
$$(\Gr^{\tilde\gamma}\times_{\Gr}\calT)/(\dot G_\calO)^{\tilde\gamma}
,\quad
\calT/(\dot G_\calO)^{\tilde\gamma}$$ are ind-geometric  and ind-tamely presented by \S\ref{sec:BFN1}.
The map $\iota_\gamma$ is an almost ind-finitely presented closed immersion by \S\ref{sec:B.h}, because 
the inclusion $\Gr^{\tilde\gamma}\subset\Gr$ is almost ind-finitely presented.
Hence the pushforward $(\iota_\gamma)_*$ preserves coherence by \S\ref{sec:B.k}.
We have the functors
\begin{align*}
\xymatrix{
\Coh^{(\dot G_\calO)^{\tilde\gamma}}(\calT^{\dot\gamma})&\ar[l]_-{(\sigma_\gamma)^*}
\Coh^{(\dot G_\calO)^{\tilde\gamma}}(\Gr^{\tilde\gamma}\times_{\Gr}\calT\ar[r]^-{(\iota_\gamma)_*})
&\Coh^{(\dot G_\calO)^{\tilde\gamma}}(\calT).}
\end{align*}
Taking  $\frakR$ instead of $\calT$ we consider the ind-geometric stack 
$\frakR^{\dot\gamma}/(\dot G_\calO)^{\tilde\gamma}$ with
$\frakR^{\dot\gamma}=\frakR\times_\calT\calT^{\dot\gamma}$ and the diagram
\begin{align*}
\xymatrix{\frakR^{\dot\gamma}\ar[r]^-{\sigma_\gamma}&\Gr^{\tilde\gamma}\times_{\Gr}\frakR\ar[r]^-{\iota_\gamma}&\frakR}
\end{align*}
The map $\iota_\gamma$
is proper and almost ind-finitely presented by \S\ref{sec:B.h}.
Hence the pushforward $(\iota_\gamma)_*$ preserves coherence
by \S\ref{sec:B.k}.
The map $i_2:\frakR\to\calT$ in \eqref{R1} is an almost ind-finitely presented closed immersion.
Hence it is tamely presented by \S\ref{sec:B.g}.
Thus the map $\sigma_\gamma$ has coherent pullback by \S\ref{sec:B.j}, yielding the following functors 
\begin{align*}
\xymatrix{
\Coh^{(\dot G_\calO)^{\tilde\gamma}}(\frakR^{\dot\gamma})&\ar[l]_-{(\sigma_\gamma)^*}
\Coh^{(\dot G_\calO)^{\tilde\gamma}}(\Gr^{\tilde\gamma}\times_{\Gr}\frakR)\ar[r]^-{(\iota_\gamma)_*}
&\Coh^{(\dot G_\calO)^{\tilde\gamma}}(\frakR)}.
\end{align*}
The K-theory is invariant under nilpotent extensions and derived enhancements.
Thus, we get the diagram
\begin{align}\label{loc2}
\begin{split}
\xymatrix{
K^{(\dot G_\calO)^{\tilde\gamma}}(\calR^{\dot\gamma})&\ar[l]_-{(\sigma_\gamma)^*}
K^{(\dot G_\calO)^{\tilde\gamma}}(\Gr^{\tilde\gamma}\times_{\Gr}\calR\ar[r]^-{(\iota_\gamma)_*})
&K^{(\dot G_\calO)^{\tilde\gamma}}(\calR)}
\end{split}
\end{align}
The tensor product with finite dimensional
representations yields an $R_{(\dot G_\calO)^{\tilde\gamma}}$-module structure
on each term of \eqref{loc2}, and 
the maps $(\sigma_\gamma)^*$ and $(\iota_\gamma)_*$ are $R_{(\dot G_\calO)^{\tilde\gamma}}$-linear.
Let $\calR_0^{\dot\gamma}$ be the $\dot\gamma$-fixed point set in the scheme $\calR_0$ considered in \eqref{unit}.
Let $\bf1_\gamma$ be the class in K-theory
of the structural sheaf $\scrO_{\calR_0^{\dot\gamma}}$.
After completing at the element $\zeta^{2\gamma}\in T$ 
we get the following version of the Segal-Thomason localization theorem.
Using \eqref{GPISOM} and \eqref{FPG}, we get the isomorphisms
\begin{align}\label{RR}
 R_{(\widetilde G_\calO)^{\tilde\gamma}}|_\zeta
\cong R_{\widetilde\dP_\gamma}|_\zeta\cong R_{\dP_\gamma\times\bbG_m}|_\zeta\cong R_{\dP_\gamma}
\cong R_{(G_\calO)^{\tilde\gamma}}.
\end{align}
Therefore, as in \eqref{formal}, we can identify
\begin{align}\label{spe1}
K^{(G_\calO)^{\tilde\gamma}}(\calR^{\dot\gamma})\cong K^{(\widetilde G_\calO)^{\tilde\gamma}}(\calR^{\dot\gamma})|_\zeta
,\quad
K^{(\widetilde G_\calO)^{\tilde\gamma}}(\calR^{\dot\gamma})\cong K^{(\dot G_\calO)^{\tilde\gamma}}(\calR^{\dot\gamma})|_\rho.
\end{align}

\begin{Lemma}\label{lem:r}\hfill
\begin{enumerate}[label=$\mathrm{(\alph*)}$,leftmargin=8mm,itemsep=1mm]
\item
The pushforward
$(\iota_\gamma)_*:K^{(G_\calO)^{\tilde\gamma}}(\Gr^{\tilde\gamma}\times_{\Gr}\calR)_{\widehat{\zeta^{2\gamma}}}
\to K^{(G_\calO)^{\tilde\gamma}}(\calR)_{\widehat{\zeta^{2\gamma}}}$
is invertible.
\item
The composed map $\bfr_\gamma=(\sigma_\gamma)^*\circ(\iota_\gamma)_*^{-1}$ is an isomorphism
$\bfr_\gamma:
K^{(G_\calO)^{\tilde\gamma}}(\calR)_{\widehat{\zeta^{2\gamma}}}
\to{K^{(G_\calO)^{\tilde\gamma}}(\calR^{\dot\gamma})}_{\widehat{\zeta^{2\gamma}}}.$
\item
We have $\bfr_\gamma(\bf1)=\bf1_\gamma$.
\end{enumerate}
\end{Lemma}

\begin{proof}
Since K-theory preserves filtered colimits, from  \eqref{colim}, \eqref{lim} and
\cite[lem.~4.14]{CWb}, \S\ref{sec:B.i} we get
\begin{align}\label{limcolim}
\begin{split}
K^{(G_\calO)^{\tilde\gamma}}(\calR)&\cong\colim_{n,\ell} K^{(G_\calO)^{\tilde\gamma}}(\calR_n^\ell),\\
K^{(G_\calO)^{\tilde\gamma}}(\calR^{\dot\gamma})&\cong
\colim_{n,\ell} K^{(G_\calO)^{\tilde\gamma}}(\calR_n^{\ell,\dot\gamma}).
\end{split}
\end{align}
Let $\calR_n^{\ell,\dot\gamma}$ be the $\dot\gamma$-fixed point locus in $\calR_n^{\ell}$.
The map $\bfr_\gamma$ is compatible with the  colimits in \eqref{limcolim}. For each $n$, $\ell$ it yields a map
\begin{align}\label{STloc}
K^{(G_\calO)^{\tilde\gamma}}(\calR_n^\ell)_{\widehat{\zeta^{2\gamma}}}
\to{K^{(G_\calO)^{\tilde\gamma}}(\calR_n^{\ell,\dot\gamma})}_{\widehat{\zeta^{2\gamma}}}
\end{align}
The Segal-Thomason localization theorem applied to the scheme of finite type $\calR_n^{\ell}$ 
implies that the map \eqref{STloc} is invertible.
\end{proof}

Now, we apply the fixed point computations given in \S\ref{sec:FP2}.

\begin{Lemma}\label{lem:loc1}\hfill
\begin{enumerate}[label=$\mathrm{(\alph*)}$,leftmargin=8mm,itemsep=1mm]
\item
There is a topological  algebra isomorphism 
$(R_{(G_\calO)^{\tilde\gamma}})_{\widehat{\zeta^{2\gamma}}}
\cong(R_{\dP_\gamma})_{\widehat 1}$.
\item
The induction from $\dP_\gamma$ to $\dG$ yields a topological
$(R_{\dG})_{\widehat 1}$-module isomorphism
$$K^{(G_\calO)^{\tilde\gamma}}(\calR^{\dot\gamma,\eta})_{\widehat{\zeta^{2\gamma}}}
\cong K^{\dG}(\scrZ_{\gamma,\eta}^\rho)_{\widehat 1}.$$
\end{enumerate}
\end{Lemma}

\begin{proof} 
By Proposition \ref{prop:fixedlocus}, we have the equivariant isomorphisms of varieties
\begin{align}\label{RX}
\calR^{\dot\gamma,\eta}\cong \scrX_\eta^\rho\times_{N_\dmu^\dlambda} N_\gamma^\rho
,\quad
\scrX_\eta^\rho\cong\dG\times_{\dP_\eta} N_\eta^\rho.
\end{align}
Taking the complexified Grothendieck groups, this yields
\begin{align}\label{isom0}
\begin{split}
K^{(\dot G_\calO)^{\tilde\gamma}}(\calR^{\dot\gamma,\eta})
&\cong K^{\widetilde\dP_\gamma\times T_W}(\scrX_\eta^\rho\times_{N_\dmu^\dlambda} N_\gamma^\rho).
\end{split}
\end{align}
Now, we have the following consequence of the tensor identity.

\begin{Claim}\label{claim:induction} Let $G$ be an affine group, $P$ a parabolic subgroup, and $Y$ a $P$-variety.
Then, the induction yields an $R_G$-module isomorphism $K^P(Y)\cong K^G(G\times_PY)$.
Let $g\in P$ be a semisimple element with associated maximal ideals
$\m_P$, $\m_G$ in $R_P$ and $R_G$. We have $\m_G=\m_P\cap R_G$.
Assume that the covering $\Spec R_P\to\Spec R_G$ is \'etale at $g$.
Then the restriction  yields a topological ring isomorphism
$(R_P)_{\widehat\m_P} \cong (R_G)_{\widehat\m_G}$ and
the induction yields a topological $(R_G)_{\widehat\m_G}$-module isomorphism
$K^P(Y)_{\widehat\m_{P}}\cong K^G(G\times_PY)_{\widehat\m_{G}}.$
\end{Claim}

\noindent 
We apply the claim to the triple $Y$, $P$, $G$ such that
$$Y=\calR^{\dot\gamma,\eta},\quad
P=\widetilde\dP_\gamma\times T_W
,\quad
G=\widetilde\dG\times T_W.$$
By \eqref{Z} we have
$\scrZ_{\gamma,\eta}^\rho=\scrX_\gamma^\rho\times_{N_\dmu^\dlambda}\scrX_\eta^\rho.$
Hence \eqref{RX} yields the following isomorphism of varieties
\begin{align*}
\scrZ_{\gamma,\eta}^\rho
\cong\dG\times_{\dP_\gamma}(\scrX_\eta^\rho\times_{N_\dmu^\dlambda} N_\gamma^\rho).
\end{align*}
The isomorphism \eqref{isom0} and the induction from $\dP_\gamma$ to $\dG$ yield the following isomorphism
\begin{align}\label{isom1}
\begin{split}
K^{(\dot G_\calO)^{\tilde\gamma}}(\calR^{\dot\gamma,\eta})
&\cong K^{\widetilde\dG\times T_W}(\scrZ_{\gamma,\eta}^\rho)
\end{split}
\end{align}
Specializing \eqref{isom1} at the element $\zeta^{2\rho}$ in $T_W$ we get the isomorphism
\begin{align}\label{isom4}K^{(\widetilde G_\calO)^{\tilde\gamma}}(\calR^{\dot\gamma,\eta})
\cong K^{\widetilde\dG}(\scrZ_{\gamma,\eta}^\rho).\end{align}
Composing \eqref{isom4} with the group isomorphism \eqref{GPISOM}, we get an isomorphism
\begin{align}\label{isom4bis}K^{(\widetilde G_\calO)^{\tilde\gamma}}(\calR^{\dot\gamma,\eta})
\cong K^{\dG\times\bbG_m}(\scrZ_{\gamma,\eta}^\rho)\end{align}
which intertwines the completions at $\zeta^{2\tilde\gamma}$ and $(1,\zeta^2)$.
The lemma follows by specializing the $\bbG_m$-action on both sides.
More precisely, the $\dG\times\bbG_m$-action on  $\scrZ_{\gamma,\eta}^\rho$
factorizes through the first projection $\dG\times\bbG_m\to\dG$.
By equivariant formality, we deduce that
$$K^{\dG\times\bbG_m}(\scrZ_{\gamma,\eta}^\rho)\cong R_{\dG\times\bbG_m}\otimes_{R_\dG}K^{\dG}(\scrZ_{\gamma,\eta}^\rho)$$
and the lemma follows.
\end{proof}

Next, we compare the multiplications in the algebras $\calA_\mu^{\rho,\wedge}$ and
${}_\K^0\widetilde\scrT_\bmu^{\rho,\wedge}$.

\begin{Lemma}\label{lem:loc2}
There is a topological algebra isomorphism
$\Theta:\calA_\mu^{\rho,\wedge}\to{}_\K^0\widetilde\scrT_\bmu^{\rho,\wedge}$
which maps
$A_{i,r}^+\star\bfe_{\rho,\gamma}$ to 
$(-1)^r\zeta_i^{-\|\gamma_i\|}e_r(\zeta_i^{2\gamma_i})\,e_{\rho,\gamma}$
plus some topologically nilpotent terms
for each $\gamma\in{}^0\!\Lambda_\dalpha^+$, $i\in I$ and $r\in\bbN$.
\end{Lemma}

\begin{proof}
By \eqref{KR'} and the definition of the unipotent completion in \eqref{unipotent}, we have
$${}_\K^0\widetilde\scrT_{\dmu}^{\rho,\wedge}
\cong\bigoplus_{\gamma,\eta\in{}^0\!\Lambda_\dalpha^+}K^{\dG}(\scrZ_{\gamma,\eta}^\rho)_{\widehat 1}.$$
We have the following chain of isomorphisms
\begin{align}\label{isom3}
\begin{split}
\bigoplus_{\eta\in{}^0\!\Lambda_\dalpha^+}K^{\dG}(\scrZ_{\gamma,\eta}^\rho)_{\widehat 1}
&\cong K^{(\widetilde G_\calO)^{\tilde\gamma}}(\calR^{\dot\gamma})_{\widehat{\zeta^{2\tilde\gamma}}}\\[-3mm]
&\cong K^{(\widetilde G_\calO)^{\tilde\gamma}}(\calR)_{\widehat{\zeta^{2\tilde \gamma}}}\\
&\cong K^{\widetilde G_\calO}(\calR)_{\widehat{\zeta^{2\tilde\gamma}}}\\
&\cong(\calA_\mu^\rho)_{\widehat{\zeta^{2\gamma}}}
\end{split}
\end{align}
The first map is Lemma \ref{lem:loc1}, the second one is the isomorphism $\bfr_\gamma$ in Lemma \ref{lem:r},
the third one is as in \cite[cor.~5.4]{K06},  see also Claim \ref{claim:induction}, 
and the last one is the definition of $\calA_\mu^\rho$ in \eqref{zetarho}.
In the above $K^{\dG}(\scrZ_{\gamma,\eta}^\rho)_{\widehat 1}$ is an $(R_{\dG})_{\hat 1}$-module,
$(\calA_\mu^\rho)_{\widehat{\zeta^{2\gamma}}}$ is an $(R_{G_\calO})_{\widehat{\zeta^{2\gamma}}}$-module, 
where  $R_{G_\calO}$ is identified with $R_{\widetilde G_\calO}|_\zeta$ as in \eqref{RGG}.
Note that we have a chain of topological algebra homomorphisms
$$(R_\dG)_{\widehat 1}\to (R_{\dP_\gamma})_{\widehat 1}\cong
(R_{(\widetilde G_\calO)^{\tilde\gamma}})_{\widehat{\zeta^{2\tilde\gamma}}}\cong
(R_{\widetilde G_\calO})_{\widehat{\zeta^{2\tilde\gamma}}}$$
and that  
$(R_{(\widetilde G_\calO)^{\tilde\gamma}})_{\widehat{\zeta^{2\tilde\gamma}}}\cong
(R_{(G_\calO)^{\tilde\gamma}})_{\widehat{\zeta^{2\gamma}}}$ as in \eqref{RR},
and 
$(R_{\widetilde G_\calO})_{\widehat{\zeta^{2\tilde\gamma}}}\cong(R_{G_\calO})_{\widehat{\zeta^{2\gamma}}}$
as in \eqref{RGG}.
Both module structures are given by tensor product of equivariant sheaves with representations.

The algebra $\calA_\mu^\rho$ has three  $R_G$-module structures: 
the left and right monoidal product with $\calA_\mu^{\rho,0}$ composed with the isomorphism
$R_G\cong\calA_\mu^{\rho,0}$ in \eqref{CL}, 
and, by \eqref{formal},  the action of $R_G$ by tensor product.
Each element $f\in R_G$ yields both an element $f\otimes\bf1$ in $\calA_\mu^{\rho,0}$ and a virtual equivariant vector bundle
$\ind(f)$ on $\calR$ as in \eqref{induction}.
For each $x\in \calA_\mu^\rho$, Proposition \ref{prop:basic} yields
\begin{align}
(f\otimes{\bf1})\star x&=f\otimes x,\label{a1}\\
x\star(f\otimes{\bf1})&=x\otimes \ind(f).\label{a2}
\end{align}
In particular, the left monoidal product by $\calA_\mu^{\rho,0}$ and the tensor product by $R_G$ are the same.
Using \eqref{TT} and the definition of the completion of $\calA_\mu^\rho$ in \eqref{complete}, 
the isomorphism \eqref{isom3} can be rewritten in the following way
\begin{align}\label{isom6}
\begin{split}
\bigoplus_{\eta\in{}^0\!\Lambda_\dalpha^+}K^{\dG}(\scrZ_{\gamma,\eta}^\rho)_{\widehat 1}
\cong\bigoplus_{\eta\in{}^0\!\Lambda_\dalpha^+}K^{(G_\calO)^{\tilde\gamma}}(\calR^{\dot\gamma,\eta})_{\widehat{\zeta^{2\gamma}}}
\cong\lim_k\calA_\mu^\rho\,/\,((\bfm_{\rho,\gamma})^k\star\calA_\mu^\rho)
\end{split}
\end{align}

Next, we claim that the summand $K^{\dG}(\scrZ_{\gamma,\eta}^\rho)_{\widehat 1}$ in the left hand side is identified with 
$$\lim_k\calA_\mu^\rho\,/\,((\bfm_{\rho,\gamma})^k\star\calA_\mu^\rho+\calA_\mu^\rho\star(\bfm_{\rho,\eta})^k)$$
in the right hand side.
To do this, we must check that the action of the ideal $\bfm_{\rho,\eta}$ of $\calA_\mu^{\rho,0}$ in \eqref{mgamma}
by the right monoidal product on the summand
$K^{(G_\calO)^{\tilde\gamma}}(\calR^{\dot\gamma,\eta})$ 
is topologically nilpotent.
Composing \eqref{RGG}, the induction and the restriction, we get a map
\begin{align}\label{map5}
\xymatrix{
\ind:R_G\ar@{=}[r]^{\eqref{RGG}}&
R_{\widetilde G_\calO}|_\zeta\ar[r]^-{\eqref{inductiona}}&K^{\widetilde G_\calO}(\calT)|_\zeta\ar[r]&
K^{(\widetilde G_\calO)^{\tilde\gamma}}(\calR^{\dot\gamma,\eta})|_\zeta=K^{(G_\calO)^{\tilde\gamma}}(\calR^{\dot\gamma,\eta})}.
\end{align}
By \eqref{a2} the action of $\bfm_{\rho,\eta}$ 
by the right monoidal product on 
$K^{(G_\calO)^{\tilde\gamma}}(\calR^{\dot\gamma,\eta})$ 
coincides with the tensor product by the image of the ideal
$\m_{\rho,\eta}$ of $R_G$ under the map \eqref{map5}.
Proposition \ref{prop:fixedlocus} yields an isomorphism
$$K^{(G_\calO)^{\tilde\gamma}}(\calR^{\dot\gamma,\eta})\cong K^{\dP_\gamma}(\scrX^\rho_\eta)\cong K^{\dG}(\scrZ^\rho_{\gamma,\eta})$$
which intertwines the tensor product by the element $\ind(f)$ in \eqref{map5} for each $f\in R_G$ with
the tensor product by $\ind(f_\eta)$ where the induction is as in \eqref{inductionb} and
$f_\eta\in R_{\dP_\eta}$ is the image of $f$ by the composed map
$$\xymatrix{R_G\ar@{=}[r]&R_{\widetilde G_\calO}|_\zeta\ar[r]^-{\res}&R_{(\widetilde G_\calO)^{\tilde\eta}}|_\zeta\ar@{=}[r]&
R_{\widetilde\dP_\eta}|_\zeta\ar@{=}[r]&R_{\dP_\eta}}$$
where the third isomorphism is Lemma \ref{lem:GP} with $\gamma=\eta$.
Finally, since $f\in\m_{\rho,\eta}$ we have $f(\zeta^{2\eta})=0$, hence $f_\eta(1)=0$,
from which the claim follows.

We deduce that the isomorphism \eqref{isom6} takes the following form
\begin{align}\label{isom8}
\Theta:\bigoplus_{\eta\in{}^0\!\Lambda_\dalpha^+}K^{\dG}(\scrZ_{\gamma,\eta}^\rho)_{\widehat 1}
\to
\bigoplus_{\eta\in{}^0\!\Lambda_\dalpha^+}
\lim_k\calA_\mu^\rho\,/\,((\bfm_{\rho,\gamma})^k\star\calA_\mu^\rho+\calA_\mu^\rho\star(\bfm_{\rho,\eta})^k)
\end{align}
Taking the sum over all $\gamma$ and all refinements $\dmu$ of $\mu$ and using \eqref{Amu'} and \eqref{Amu}, 
we get the isomorphism
\begin{align}\label{KLRA}
\Theta:{}_\K^0\widetilde\scrT_\bmu^{\rho,\wedge}
\to\calA_\mu^{\rho,\wedge}
\end{align}

By \eqref{TT} and  \eqref{isom1} we have the isomorphism
\begin{align}\label{isom9}
\Xi:K^{(G_\calO)^{\tilde\gamma}}(\calR^{\dot\gamma})
\to
\bigoplus_{\eta\in{}^0\!\Lambda_\dalpha^+}K^{\dG}(\scrZ_{\gamma,\eta}^\rho)
\end{align}
which takes $\bf1_\gamma$ to the idempotent $e_{\rho,\gamma}$. 
Lemma \ref{lem:r} yields
$\bfr_\gamma({\bf1})=\bf1_\gamma$.
Hence, we have  $\Theta(e_{\rho,\gamma})=\bfe_{\rho,\gamma}$.
Since the map $\bfr_\gamma$ is $R_{(G_\calO)^{\tilde\gamma}}$-linear,
from \eqref{a1}, \eqref{L} and \eqref{formula} we deduce that
\begin{align*}
(\Theta\circ\Xi)\big((-1)^rD_i^{-1}\otimes\Lambda_{i,r}\otimes {\bf1_\gamma}\big)
=((-1)^rD_i^{-1}\otimes\Lambda_{i,r}\otimes{\bf1})\star\bfe_{\rho,\gamma}
=A_{i,r}^+\star\bfe_{\rho,\gamma}.
\end{align*}
Next, in $K^{(G_\calO)^{\tilde\gamma}}(\calR^{\dot\gamma})_{\widehat{\zeta^{2\gamma}}}$ we have
$$(-1)^rD_i^{-1}\otimes\Lambda_{i,r}\otimes {\bf1}_\gamma=(-1)^r\zeta_i^{-\|\gamma_i\|}e_r(\zeta_i^{2\gamma_i})\,\bf1_\gamma$$
plus some topologically nilpotent terms.
Therefore 
$$\Theta\big((-1)^r\zeta_i^{-\|\gamma_i\|}e_r(\zeta_i^{2\gamma_i})\,e_{\rho,\gamma})=
A_{i,r}^+\star\bfe_{\rho,\gamma}$$
plus some topologically nilpotent terms.

Finally, we claim that the isomorphism \eqref{KLRA} commutes with the multiplication.
We must check that the sum $\bfr_\dmu=\bigoplus_\gamma\bfr_\gamma$ is an algebra homomorphism
$$\bfr_\dmu:
\bigoplus_{\gamma\in{}^0\!\Lambda_\dalpha^+} K^{(G_\calO)^{\tilde\gamma}}(\calR)_{\widehat{\zeta^{2\gamma}}}\to 
\bigoplus_{\gamma\in{}^0\!\Lambda_\dalpha^+} K^{(G_\calO)^{\tilde\gamma}}(\calR^{\dot\gamma})_{\widehat{\zeta^{2\gamma}}}
\cong K^{\dG}(\calZ_\dmu^\rho).$$
To do this, we consider the following fiber diagram
\begin{align}
\label{diag7}
\begin{split}
\xymatrix{
&\Gr^{\tilde\gamma}\times_\Gr\frakR\ar[dl]_-{\iota_\gamma}&\frakR^{\dot\gamma}\ar[l]\ar[l]_-{\sigma_\gamma}\\
\frakR&\frakR_T\ar[l]_-\iota\ar[u]^-a\ar[d]_-{i_2}&\frakR_T^{\dot\gamma}\ar[l]_{b}\ar[u]_-{\iota_1}\ar[d]^-{i_1}&\\
&\calT_T&\calT_T^{\dot\gamma}\ar[l]_-c&\\
&\Gr_T\ar[u]^-{\sigma_2}\ar[ru]_-{\sigma_1}&&}
\end{split}
\end{align}
The diagram of ind-geometric stacks
\begin{align*}
\xymatrix{\frakR^{\dot\gamma}/\widetilde T_\calO\ar[r]^-{\sigma_\gamma}&
(\Gr^{\tilde\gamma}\times_{\Gr}\frakR)/\widetilde T_\calO\ar[r]^-{\iota_\gamma}&\frakR/\widetilde T_\calO}
\end{align*}
Let $\loc$ denote the localization with respect to all non zero elements in $R_{\widetilde T_\calO}$.
By definition, the map  $\bfr_\gamma$ is obtained by base change from
$$\sigma_\gamma^*\circ(\iota_\gamma)_*^{-1}: 
K^{\widetilde T_\calO}(\calR)_\loc
\to{K^{\widetilde T_\calO}(\calR^{\dot\gamma})}_\loc$$
Next, setting $H=T$ in  \S\ref{sec:H2} we get the maps
\begin{align*}
\xymatrix{
K^{\widetilde T_\calO}(\Lambda)&\ar[l]_-{z^*}
K^{\widetilde T_\calO}(\calR_T)\ar[r]^-{\iota_*}&
K^{\widetilde T_\calO}(\calR)
}
\end{align*}
This yields an isomorphism
$$z^*\circ\iota_*^{-1}:K^{\widetilde T_\calO}(\calR)_\loc
\to K^{\widetilde T_\calO}(\Lambda)_\loc$$
Taking the sum over all $\gamma$'s we get the map
$$\bfr_2:\bigoplus_{\gamma\in{}^0\!\Lambda_\dalpha^+} K^{(\widetilde G_\calO)^{\tilde\gamma}}(\calR)_{\widehat{\zeta^{2\tilde\gamma}}}
\to K^{\widetilde T_\calO}(\Lambda)_\loc.$$
We define similarly $z_1^*=\sigma_1^*\circ (i_1)_*$ and we consider the isomorphism
$$z_1^*\circ(\iota_1)_*^{-1}:K^{\widetilde T_\calO}(\calR^{\dot\gamma})_\loc
\to K^{\widetilde T_\calO}(\Lambda)_\loc$$
Taking the sum over all $\gamma$'s, we get the map
$$\bfr_1:
\bigoplus_{\gamma\in{}^0\!\Lambda_\dalpha^+} K^{(\widetilde G_\calO)^{\tilde\gamma}}(\calR^{\dot\gamma})_{\widehat{\zeta^{2\tilde\gamma}}}
\to K^{\widetilde T_\calO}(\Lambda)_\loc$$
The base change along the Cartesian squares in the diagram \eqref{diag7} yields 
$$\sigma_\gamma^*\circ a_*=(\iota_1)_*\circ b^*
,\quad
(i_1)_*\circ b^*=c^*\circ (i_2)_*$$
We deduce that
\begin{align*}
z^*\circ\iota_*^{-1}
&=\sigma_1^*\circ c^*\circ (i_2)_*\circ a_*^{-1}\circ (\iota_\gamma)_*^{-1}\\
&=\sigma_1^*\circ(i_1)_*\circ b^* \circ a_*^{-1}\circ (\iota_\gamma)_*^{-1}\\
&=\sigma_1^*\circ(i_1)_*\circ (\iota_1)_*^{-1}\circ \sigma_\gamma^*\circ (\iota_\gamma)_*^{-1}\\
&=z_1^*\circ (\iota_1)_*^{-1}\circ \sigma_\gamma^*\circ (\iota_\gamma)_*^{-1}
\end{align*}
Taking the sum over all $\gamma$'s we get the following equality $\bfr_2=\bfr_1\circ\bfr_\dmu$.
The map $\bfr_1$ is an algebra embedding because, since $\calZ_\dmu^\rho$ is of finite type,
under the isomorphism \eqref{isom9}
it reduces to \cite[thm.~5.11.1]{CG}.
The map $\bfr_2$ is also an algebra homomorphism by \S\ref{sec:H2}.
Hence $\bfr_\dmu$ is an algebra homomorphism, proving the claim, hence the theorem.
\end{proof}

\subsubsection{Proof of Theorem $\ref{thm:Main1}$}
We now prove Theorem \ref{thm:Main1}.
The isomorphism in (a)  is the composition of the Chern character in
Lemma \ref{lem:Chern} and the isomorphism in Lemma \ref{lem:loc2}.
To prove (b) we fix a smooth module $M$ of $\scrA_\mu^{\rho,\wedge}$.
Part (a) yields a ${}^0\widetilde\scrT_\bmu^\rho$-action on $M$.
Let $\Theta_*(M)$ be the resulting ${}^0\widetilde\scrT_\bmu^{\rho,\wedge}$-module.
By Proposition \ref{prop:HC}, to prove the claim it is enough to check that the set 
$$\{\|\gamma\|\,;\,\bfW_{\rho,\gamma}(M)\neq 0\}$$
is bounded above if and only if the ${}^0\widetilde\scrT_\bmu^\rho$-action 
on $\Theta_*(M)$ descends to the quotient algebra ${}^0\scrT_\bmu^\rho$.
By Lemma \ref{lem:loc2}, the isomorphism 
$\calA_\mu^{\rho,\wedge}\cong{}^0\widetilde\scrT_\bmu^{\rho,\wedge}$ 
identifies the idempotents $\bfe_{\rho,\gamma}$ and $e_{\rho,\gamma}$.
Hence, by \eqref{egamma}, we must prove that the set  
$$\{\|\gamma\|\,;\,e_{\rho,\gamma}\Theta_*(M)\neq 0\}$$
is bounded above if and only if the ${}^0\widetilde\scrT_\bmu^\rho$-action on $\Theta_*(M)$ descends to the quotient algebra 
${}^0\scrT_\bmu^\rho$.
By \eqref{cyclo}, the cyclotomic ideal of ${}^0\widetilde\scrT_{\dmu}^\rho$ is generated by the idempotents 
$e_{\rho,\gamma}$ such that
$$\max\{\rho_{\dii,s}\,;\,\dii\in\dI\,,\,s\in[1,l_i]\}<\max\{\gamma_{\dii,r}\,;\,\dii\in\dI\,,\,r\in[1,a_i]\}$$
Hence, we must check that 
\begin{align}\label{equiv}\{\|\gamma\|\,;\,e_{\rho,\gamma} \Theta_*(M)\neq 0\}\ \text{is\ bounded\ above}\ \iff
e_{\rho,\gamma} \Theta_*(M)=0\ \text{if}\ \max\{\rho_{\dii,s}\}<\max\{\gamma_{\dii,r}\}\end{align}

First, we check the inverse implication. If the right hand side of \eqref{equiv} holds, then for all 
$\gamma$ such that $e_{\rho,\gamma} \Theta_*(M)\neq 0$ we have $\max\{\gamma_{\dii,r}\}\leqslant\max\{\rho_{\dii,s}\}$,
hence $\|\gamma\|$ is bounded on $M$ by \eqref{p1gamma}.

Now, we prove the direct implication.
Assume that the left hand side of \eqref{equiv} holds, 
and that there is a coweight $\gamma\in{}^0\!\Lambda_\dalpha^+$ such that
$e_{\rho,\gamma} \Theta_*(M)\neq 0$ and 
$$\max\{\rho_{\dii,s}\}<\max\{\gamma_{\dii,r}\}.$$
Set $N=\max\{\gamma_{\dii,r}\}$.
Choose any $\gamma'\in{}^0\!\Lambda_\dalpha^+$ such that
\begin{enumerate}[label=$\mathrm{(\alph*)}$,leftmargin=8mm,itemsep=1mm]
\item[$\bullet$]
$\gamma_{\dii,r}=\gamma'_{\dii,r}$ if $\gamma_{\dii,r}\neq N$
\item[$\bullet$]
$\gamma'_{\dii,r}=N+1$  if $\gamma_{\dii,r}= N$
\end{enumerate}
To the coweights $\gamma$, $\gamma'$ we associate the sequences $\dv_\gamma,$ $\dv_{\gamma'}$ in ${}^0\P(\dalpha)$ as in 
\eqref{bijection}. 
Under the assumptions above,  from \eqref{v1} and \eqref{kappa1} we deduce that 
$\pi(\dv_\gamma)=\pi(\dv_{\gamma'})$, hence \eqref{flag} implies that
$\Fl_{\dv_\gamma}=\Fl_{\dv_{\gamma'}}.$
Thus, the diagram \eqref{diag0} yields an isomorphism of varieties
$\scrX_\gamma^\rho\cong \scrX_{\gamma'}^\rho.$
Hence, we have 
$e_{\rho,\gamma'} \Theta_*(M)\neq 0$ 
yielding a contradiction.

\subsection{Decategorification}
Let $\balpha$, $\blambda$, $\bmu$ be as in \eqref{blam} and
$\alpha$, $\lambda$, $\mu$ as in \eqref{lam2}.
Fix an even coweight $\rho\in{}^0\!\Lambda_\lambda$.
Our goal is to compute the Grothendieck groups of the categories ${}^0\scrO_\bmu^\rho$ and ${}^0\scrO^\rho$
introduced in \eqref{Oint2}.
Let the dimension vector $\dlambda$ and the dimension sequence $\dw$ be as in \eqref{lambdaW}.
Let $m$ be the number of non zero parts in $\dw$, 
and $\dww=\tau(\dw)$ be the $m$-tuple obtained by removing the zero parts.
Let ${}^0\!L(\dww)$ be the $\dfrakg$-module in \eqref{0L} and
let ${}^0\!L(\dww)_\bmu$ be the sum of the weight spaces ${}^0\!L(\dww)_\dmu$ 
over the set of all refinements $\dalpha$ of $\alpha$ with $\dmu$ is in \eqref{dlam1}.

\subsubsection{The main theorem}
Recall that an $\ell$-weight $\Psi$ is an $I$-tuple  $(\Psi_i(u\rrp$ of rational functions in $\bbC(u)$.
For each coweight $\gamma\in{}^0\!\Lambda_\alpha^+$,
we consider the $\ell$-weight $Z_{\rho,\gamma}=Z_\rho\cdot A_\gamma^{-1}$,
where $Z_\rho=(Z_{\rho,i})$, $A_\gamma=(A_{\gamma,i})$, the product of $I$-tuples is the componentwise multiplication and 
\begin{align}
\label{Psigamma}
\begin{split}
Z_{\rho,i}(u)&=\prod_{s=1}^{l_i}\big(1-\zeta_i^{2+2\rho_{i,s}}u^{-1}\big),\\
A_{\gamma,i}(u)&= A_{\gamma_i}(u)\cdot A_{\gamma_i}(\zeta_i^{-2}u)\cdot\prod_{c_{ji}<0}
\prod_{n=1}^{-c_{ji}}A_{\gamma_j}(\zeta_j^{-c_{ji}-2n}u)^{-1},\\
A_{\gamma_i}(u)&=\zeta_i^{-\|\gamma_i\|}\prod_{r=1}^{a_i}\big(1-\zeta_i^{2\gamma_{i,r}}u^{-1}\big),\\
\end{split}
\end{align}
where $\|\gamma_i\|$ is as in \eqref{p1gamma}.
Compare with formula \eqref{AZ}.
Thus, we have
\begin{align}\label{ZrhoAgamma}
Z_{\rho,i}=\prod_{s=1}^{l_i}Z_{i,2d_i+2d_i\rho_{i,s}},\quad
A_{\gamma,i}=\prod_{r=1}^{a_i}A_{i,2d_i\gamma_{i,r}}
\end{align}
where $Z_{i,r}$ is the prefundamental $\ell$-weight introduced in \S\ref{def:O1} and
$A_{i,k}$ is the $\ell$-weight given by 
\begin{align}\label{Aik}
A_{i,k}=\zeta^{-k/2}\cdot Z_{i,k}\cdot Z_{i,k+2d_i}\cdot\prod_{c_{ij}<0}\prod_{n=1}^{-c_{ij}}(Z_{j,k+b_{ij}+2d_in})^{-1}.
\end{align}
Let $Z_{\rho,\gamma,i}^+(u)$ denote the expansion of the rational function $Z_{\rho,\gamma,i}(u)$
as negative powers of $u$.
The $\ell$-weight $Z_{\rho,\gamma}$ satisfies the condition \eqref{even2}
because the coweights $\rho$ and $\gamma$ are even.
Therefore,  the simple module $L(Z_{\rho,\gamma})$ belongs to the category ${}^0\scrO_\bmu$. 
We compose the maps $\Phi$ in \eqref{Phi1} and 
$\Theta$ in Theorem \ref{thm:Main1}. This yields the map  
$$\Omega=\Theta\circ\Phi:\bfU_\bmu\to{}^0\widetilde\scrT_\bmu^{\rho,\wedge}.$$
Let $\Irr({}^0\scrO_\bmu^\rho)$ denote the set of isomorphism classes of simple objects in ${}^0\scrO_\bmu^\rho$.

\begin{Proposition}\label{prop:Main}\hfill
\begin{enumerate}[label=$\mathrm{(\alph*)}$,leftmargin=8mm,itemsep=1mm]
\item
The  map $\Omega$ is an algebra homomorphism
$\bfU_\bmu\to{}^0\widetilde\scrT_\bmu^{\rho,\wedge}$ such that
\begin{align*}
\bfA_i^+(u)\mapsto \bigoplus_{\gamma}A_{\gamma_i}(u)\cdot e_{\rho,\gamma}
,\quad
\psi_i^+(u)\mapsto\bigoplus_{\gamma}Z_{\rho,\gamma,i}^+(u)\cdot e_{\rho,\gamma}
\end{align*}
modulo nilpotent terms.
\item
We have $\Irr({}^0\scrO_\bmu^\rho)\subset\{L(Z_{\rho,\gamma})\,;\,\gamma\in{}^0\!\Lambda_\alpha^+\}$.
\item
There is an equivalence of categories 
${}^0\scrO^\rho\cong{}^0\T^\dww\text{-}\nilp$.
\end{enumerate}
\end{Proposition}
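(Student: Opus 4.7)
My approach is to deduce both parts from the localization Theorem \ref{thm:Main1}, combined with Proposition \ref{prop:onto} and the equivalence \eqref{0TT} from the proof of Proposition \ref{prop:Parity}.

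For Part (a), the desired algebra homomorphism is the composition
$$\bfU_\bmu \xrightarrow{\Phi_\mu^\rho} \calA_\mu^\rho \hookrightarrow \calA_\mu^{\rho,\wedge} \xrightarrow{\phi} {}^0\widetilde\scrT_\bmu^{\rho,\wedge},$$
where $\Phi_\mu^\rho$ is the surjection of Proposition \ref{prop:onto} and $\phi$ is the topological algebra isomorphism of Theorem \ref{thm:Main1}(a). To verify the explicit formula for $\bfA_i^+(u)$, I will note that $\Phi_\mu^\rho$ sends $\bfA_{i,r}^+$ to $A_{i,r}^+$ by Proposition \ref{prop:Phi} together with \eqref{formula}, and that Theorem \ref{thm:Main1}(a) identifies
$$\phi(A_{i,r}^+ * \bfe_{\rho,\gamma}) \equiv (-1)^r \zeta_i^{-|\gamma_i|}\, e_r(\zeta_i^{2\gamma_i})\, e_{\rho,\gamma}$$
modulo topologically nilpotent terms. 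Packaging into the generating series $\bfA_i^+(u)=\sum_{r\geqslant 0}\bfA_{i,r}^+ u^{-r}$ immediately recognizes the right-hand side as $A_{\gamma_i}(u)\, e_{\rho,\gamma}$ from \eqref{Psigamma}.

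The formula for $\psi_i^+(u)$ then follows from the defining relation \eqref{AZ}. Since $M_i^+ = 0$, it suffices to check that the specialization of $Z_i(u)$ at $\zeta^\rho$ equals $Z_{\rho_i}(u)$: by \eqref{tau-gamma} we have $\cbfz_{i,s}|_\rho = \zeta^{2d_i\rho_{i,s}}$, hence $\zeta_i^2\,\cbfz_{i,s}|_\rho = \zeta_i^{2+2\rho_{i,s}}$ as required. Substituting the formula for $\bfA_i^+(u)$ into \eqref{AZ} yields exactly $\Psi_{\rho,\gamma,i}^+(u)\, e_{\rho,\gamma}$ modulo topologically nilpotent corrections. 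The inverse $\bfA_i^+(u)^{-1}$ is well-defined in the completion because its constant term $\bfA_{i,0}^+ = (\phi_i^+)^{-1}$ acts as the invertible scalar $\zeta_i^{-|\gamma_i|}$ on the $\gamma$-weight space, by \eqref{weight}.

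For Part (b), assuming $W \neq 0$ so that Theorem \ref{thm:Main1}(c) applies, I will sum the equivalence $\scrO_{\bmu,\bbZ}^\rho \cong {}^0\scrT_\bmu^\rho\text{-}\nilp$ over all $\bmu$. The chain of correspondences $\bmu = \blambda - \balpha$, $\mu = \lambda - C\alpha$, $\dmu = \dlambda - \dC\cdot\dalpha$ with $\dalpha$ refining $\alpha$ shows that each refinement $\dmu$ arises from a unique $\bmu$, so the total sum gives $\scrO_\bbZ^\rho \cong {}^0\scrT^\dw\text{-}\nilp$. Composing with the equivalence ${}^0\scrT^\dw\text{-}\nilp \cong {}^0\T^\dww\text{-}\nilp$ from \eqref{0TT} (established in the proof of Proposition \ref{prop:Parity}) then completes the argument. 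The main technical care will concern tracking the topologically nilpotent corrections when inverting $\bfA_i^+(u)$ and multiplying the series together to obtain $\psi_i^+(u)$; since $\calA_\mu^{\rho,\wedge}$ is a complete topological algebra whose quotient by its local Jacobson radical recovers the scalar action on weight spaces, the bookkeeping is conceptually routine once one has checked that the semisimple part of the image is what is claimed.
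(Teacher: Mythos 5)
Your proposal is correct and follows essentially the same route as the paper, whose proof of this proposition is simply the two-line citation of Proposition \ref{prop:Phi}, Theorem \ref{thm:Main1}, the formulas \eqref{AZ} for Part (a), and Theorem \ref{thm:Main1} plus the equivalence \eqref{0TT} for Part (b). Your expansion of these steps (the composition through $\Phi_\mu^\rho$ and $\phi$, the identification of the generating series $A_{\gamma_i}(u)$ from the elementary symmetric functions, the specialization $Z_i(u)|_\rho=Z_{\rho_i}(u)$, and the summation over $\bmu$) accurately fills in what the paper leaves implicit.
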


\begin{proof}
Part (a) follows from Proposition \ref{prop:Phi}, Theorem \ref{thm:Main1} and \eqref{AZ}.
The equivalence of categories $\Theta_*$ in Theorem \ref{thm:Main1} identifies the simple objects in ${}^0\scrO_\bmu^\rho$
with simple modules of the algebra ${}^0\widetilde\scrT_\bmu^{\rho,\wedge}$.
By Part (a), the $\ell$-weights of such modules are all of the form $Z_{\rho,\gamma}$, proving Part (b).
Part (c) follows from Theorem \ref{thm:Main1} and the equivalence \eqref{0TT}.
\end{proof}

We can now state the main  theorem.

\begin{Theorem}\label{thm:Main2}\hfill
\begin{enumerate}[label=$\mathrm{(\alph*)}$,leftmargin=8mm,itemsep=1mm]
\item
There is a linear isomorphism 
$\Psi:K({}^0\scrO^\rho)\to{}^0\!L(\dww)^\vee$ which identifies the subspaces
$K({}^0\scrO_\bmu^\rho)$ with ${}^0\!L(\dww)_\bmu^\vee$.
\item
There is a representation of $\dfrakg$ on the category ${}^0\scrO^\rho$ such that the $\dfrakg$-action on 
$K({}^0\scrO^\rho)$ is identified with the $\dfrakg$-module ${}^0\!L(\dww)^\vee$ via $\Psi$.
\item
The map $\Psi$ takes the classes of the simple modules in $K({}^0\scrO^\rho)$ to the elements of the
dual canonical basis. 
\end{enumerate}
\end{Theorem}

\begin{proof}
Follows from Propositions \ref{prop:Parity} and \ref{prop:Main}.
In particular, the proof of Proposition \ref{prop:Parity} implies that the representation of $\dfrakg$ on the category
$\T^\dww\text{-}\nilp$ given by the functors $\calE_\dii$ and $\calF_\dii$ in \eqref{EF}
descends to the quotient category ${}^0\T^\dww\text{-}\nilp$.
\end{proof}

\begin{Remark} Theorem \ref{thm:Main2} has the following consequences.\hfill
\begin{enumerate}[label=$\mathrm{(\alph*)}$,leftmargin=8mm,itemsep=1mm]
\item
$L(\dlambda)\subset{}^0\!L(\dww)$.
\item
If $m=1$ then ${}^0\!L(\dww)=L(\dww)=L(\dlambda)$.

\item
If $\dww$ is generic  then ${}^0\!L(\dww)=L(\dww)=L(\domega)$.

\end{enumerate}
\end{Remark}

\subsubsection{The $q$-characters of simple modules in ${}^0\scrO_\bmu^\rho$}
The equivalence of Proposition \ref{prop:Main} allows us to compute the $q$-characters of the
modules in ${}^0\scrO_\bmu^\rho$ 
in  terms of characters
over the integral $\bbZ$-weighted QH algebras. Indeed, 
for each module $M$ in $\scrO_\bmu^\rho$ the $\ell$-weight subspaces 
$\bfW_{\rho,\gamma}(M)$ and $M_{\Psi_{\rho,\gamma}}$ in \eqref{Wgamma}, \eqref{MPsi} coincide.
Thus, the $q$-character of  $M$ is
$$q\text{-}\!\ch(M)=\sum_{\gamma}\dim \bfW_{\rho,\gamma}(M)\cdot Z_{\rho,\gamma}.$$
Following \cite[\S 2.5]{KL09}, we define the $q$-character of nilpotent modules over integral $\bbZ$-weighted QH algebras as follows.

\begin{Definition}
The $q$-character of the nilpotent  ${}^0\scrT_\bmu^\rho$-module $N$ is the following formal sum
\begin{align*}
q\text{-}\!\ch(N)=\sum_{\gamma}\dim (e_{\rho,\gamma}\,N)\cdot Z_{\rho,\gamma}
\end{align*}
\end{Definition}

\begin{Corollary}\label{cor:qchar}
The equivalence of categories $\Theta_*:{}^0\scrO_\bmu^\rho\to{}^0\scrT_\bmu^\rho\-\nilp$
in Theorem $\ref{thm:Main1}$ preserves the $q$-characters. More precisely,
we have
$q\text{-}\!\ch(M)=q\text{-}\!\ch(\Theta_* M)$
for all $M\in{}^0\scrO_\bmu^\rho$.
\qed
\end{Corollary}

\subsubsection{Relation with monomials and crystals}\label{sec:crystal}

The $\dfrakg$-module ${}^0\!L(\dww)$
in \eqref{0L}  is spanned by a subset of the dual canonical basis of $L(\dww)$
by Proposition \ref{prop:Parity}.
It admits a crystal, which is a subcrystal of the tensor product crystal of $L(\dww)$.
By \eqref{0TT} and Proposition \ref{prop:Parity},  the Kashiwara operators can be described
in terms of the simple nilpotent modules of the algebra ${}^0\T^\dww$  introduced in \eqref{0T2}
in the following way: for any simple nilpotent module $M$ the module $\widetilde f_\dii(M)$ 
is the head of the module $\calF_\dii(M)$ in \eqref{EF1}, 
and $\widetilde e_\dii(M)$ is the head of the module $\calE_\dii(M)$ in \eqref{EF2}.
Let ${}^0\!B(\rho)$ denote the crystal basis of the module ${}^0\!L(\dww)$.
By Theorem \ref{thm:Main2}, the crystal basis of ${}^0\!B(\rho)$ is canonically identified with the set 
$\Irr({}^0\scrO^\rho)$ of isomorphism classes of simple objects in the  category ${}^0\scrO^\rho$.
The goal of this section is to give a combinatorial description of this crystal.

To do this, we consider the set of all Laurent monomials in the variables $\bfz_{i,k}$ for $(i,k)\in I^\bullet$.
For any sequence $u=(u_{i,k})$ of integers we consider the monomial
$$\bfz^u=\prod_{(i,k)\in I^\bullet}(\bfz_{i,k})^{u_{i,k}}.$$
We say that the monomial $\bfz^u$ is even if $u_{i,k}=0$ whenever $(i,k)\notin {}^0\!I$.
Let $\calM$ be the set of all even monomials.
Following \eqref{Aik} we define
\begin{align}\label{bfa}
\bfa_{i,k}=\bfz_{i,k}\cdot \bfz_{i,k+2d_i}\cdot\prod_{c_{ij}<0}\prod_{n=1}^{-c_{ij}}(\bfz_{j,k+b_{ij}+2d_in})^{-1}.
\end{align}
Note that
$$\bfz_{i,k}\in\calM\Rightarrow\bfa_{i,k}\in\calM.$$
Recall that a pair $(\dii,k)\in\dI^\bullet$ with $\dii=(i,r)$ is even if $k\equiv_{2d_i}r$, see \eqref{even1}.
If  $(\dii,k)$ and $\bfz^u$ are both even, let
$$u_{\dii,k}=u_{i,k}
,\quad
\bfz_{\dii,k}=\bfz_{i,k}
,\quad 
\bfa_{\dii,k}=\bfa_{i,k}.$$
Now, we define 
\begin{align}\label{crystal0}
\begin{split}
\text{wt}(\bfz^u)&=\sum_{k,\dii}u_{\dii,k}\,\delta_\dii, \\
\varepsilon_{\dii,l}(\bfz^u)&=-\sum_{k\leqslant l}u_{\dii,k},\\
\varphi_{\dii,l}(\bfz^u)&=\sum_{k> l}u_{\dii,k},\\
\varepsilon_\dii(\bfz^u)&=\max\{\varepsilon_{\dii,l}(\bfz^u)\,;\,l\in\bbZ\},\\[2mm]
\varphi_\dii(\bfz^u)&=\max\{\varphi_{\dii,l}(\bfz^u)\,;\,l\in\bbZ\},
\end{split}
\end{align}
and
\begin{align}\label{crystal1}
\begin{split}
m_\dii&=\min\{l\,;\,\varepsilon_{\dii,l}(\bfz^u)=\varepsilon_\dii(\bfz^u)\}\\
&=\min\{l\,;\,\varphi_{\dii,l}(\bfz^u)=\varphi_\dii(\bfz^u)\},\\
n_\dii&=\max\{l\,;\,\varepsilon_{\dii,l}(\bfz^u)=\varepsilon_\dii(\bfz^u)\}\\
&=\max\{l\,;\,\varphi_{\dii,l}(\bfz^u)=\varphi_\dii(\bfz^u)\}.
\end{split}
\end{align}
To avoid confusions we may write 
$n_\dii(\bfz^u)=n_\dii$ and $m_\dii(\bfz^u)=m_\dii$.
Note that 
\begin{align*}
\varepsilon_\dii(\bfz^u)\,,\,&\varphi_\dii(\bfz^u)\geqslant 0,\\
\varepsilon_\dii(\bfz^u)>0&\Rightarrow (\dii,m_\dii)\ \text{even},\\
\varphi_\dii(\bfz^u)>0&\Rightarrow (\dii,n_\dii)\ \text{even}
\end{align*}
We consider the operators $\widetilde e_\dii, \widetilde f_\dii:\calM\to\calM\sqcup\{0\}$ such that
\begin{align}\label{crystal2}
\begin{split}
\widetilde e_\dii(\bfz^u)&=\begin{cases}0&\text{if}\ \varepsilon_\dii(\bfz^u)=0\\ \bfz^u\bfa_{\dii,m_\dii}&\text{else,}\end{cases}\\
\widetilde f_\dii(\bfz^u)&=\begin{cases}0&\text{if}\ \varphi_\dii(\bfz^u)=0\\ \bfz^u\bfa_{\dii,n_\dii}^{-1}&\text{else.}\end{cases}
\end{split}
\end{align}

\begin{Proposition}\label{prop:crystal}
The tuple $(\calM, \varepsilon_\dii, \varphi_\dii, \widetilde e_\dii, \widetilde f_\dii)$ is a crystal of type $\dC$.
\end{Proposition}

\begin{proof}
We must check the axioms of a crystal in \cite[\S7.2]{K95}.
From \eqref{crystal0}, \eqref{crystal1} we deduce that 
\begin{align}\label{wtef}
\text{wt}(\bfz^u)=\sum_{\dii}(\varphi_{\dii}(\bfz^u)-\varepsilon_{\dii}(\bfz^u\rrp\,\delta_\dii.
\end{align}
Let $\bfz^u$ be an even monomial such that $\varphi_\dii(\bfz^u)>0$. Set $\bfz^{u'}=\widetilde f_\dii(\bfz^u)$.
From \eqref{bfa}, \eqref{crystal2} we deduce that
\begin{align}\label{u'}
\begin{split}
u_{\dii,k}&=u'_{\dii,k}+\delta_{k=n_\dii}+\delta_{k=n_\dii+2d_i},\\
\varphi_{\dii,l}(\bfz^u)&=\varphi_{\dii,l}(\bfz^{u'})+\delta_{l<n_\dii}+\delta_{l< n_\dii+2d_i}.
\end{split}
\end{align}
Hence, we have the following picture:
\begin{enumerate}[label=$\mathrm{(\alph*)}$,leftmargin=8mm,itemsep=1mm]
\item
if $l<n_\dii$ then $\varphi_{\dii,l}(\bfz^{u'})=\varphi_{\dii,l}(\bfz^u)-2<\varphi_\dii(\bfz^u)-1$
and $\varphi_{\dii,l}(\bfz^u)\leqslant\varphi_\dii(\bfz^u)$,
\item
if $l=n_\dii$ then
$\varphi_{\dii,l}(\bfz^{u'})=\varphi_\dii(\bfz^u)-1$ and $\varphi_{\dii,l}(\bfz^u)=\varphi_\dii(\bfz^u)$,
\item
if $l>n_\dii$ then $\varphi_{\dii,l}(\bfz^{u'})=\varphi_{\dii,l}(\bfz^u)\leqslant\varphi_\dii(\bfz^u)-1$ and
$\varphi_{\dii,l}(\bfz^u)<\varphi_\dii(\bfz^u)$.
\end{enumerate}
Thus $\varphi_\dii(\bfz^{u'})=\varphi_\dii(\bfz^u)-1$.
By \eqref{bfa} we have $\wt(\bfz^{u'})=\wt(\bfz^u)-\dC\cdot\delta_\dii$.
Hence we also have $\varepsilon_\dii(\bfz^{u'})=\varepsilon_\dii(\bfz^u)+1$ by \eqref{wtef}.
In particular $\varepsilon_\dii(\bfz^{u'})$ is positive.
From (a), (b), (c) above we deduce that $m_\dii(\bfz^{u'})=n_\dii$, thus
$\widetilde e_\dii(\bfz^{u'})=\bfz^{u'}\bfa_{\dii,n_\dii}=\bfz^u$.
Therefore, we have
$$\widetilde f_\dii(\bfz^{u})=\bfz^{u'}\Rightarrow \bfz^{u}=\widetilde e_\dii(\bfz^{u'}).$$
The converse is proved in a similar way.
\end{proof}

We say that the monomial $\bfz^u$ is $\ell$-dominant if $u_{i,k}\geqslant 0$ for all $i,k$.
Let $\calM^+\subset\calM$ be the subset of $\ell$-dominant monomials.
We consider the  map
$$\bfz:{}^0\!B(\rho)\to\calM
,\quad
L(Z_{\rho,\gamma})\mapsto \bfz_\rho \bfa_\gamma^{-1}.$$
For each $\ell$-dominant monomial $\bfz^u\in\calM^+$ let $\calM(\bfz^u)$ be the connected component of the crystal $\calM$
containing $\bfz^u$.
If $\bfz^u=\bfz_{i_1,k_1}\cdot\bfz_{i_2,k_2}\cdots\bfz_{i_n,k_n}$ we define 
$$\calM^1(\bfz^u)=\{\bfz^{u_1}\cdot\bfz^{u_2}\cdots\bfz^{u_n}\,;\,\bfz^{u_r}\in\calM(\bfz_{i_r,k_r})\,,\,r=1,2,\dots,n\}$$ 
For any positive integer $n$ we define inductively $\calM^{n+1}(\bfz^u)$ to be the union of all
$\calM^1(\bfz^{u'})$ where $\bfz^{u'}$ runs over the set of all $\ell$-dominant monomials in $\calM^n(\bfz^u)$.
Let $\calM^\infty(\bfz^u)$ be the subcrystal of $\calM$ generated by the subset $\bigcup_{n>0}\calM^n(\bfz^u)$.
Following \eqref{ZrhoAgamma}, we define the following monomials
\begin{align*}
\bfz_\rho=\prod_{i\in I}\prod_{s=1}^{l_i}\bfz_{i,2d_i+2d_i\rho_{i,s}},\quad
\bfa_\gamma=\prod_{i\in I}\prod_{r=1}^{a_i}\bfa_{i,2d_i\gamma_{i,r}}.
\end{align*}
The following generalization of Proposition \ref{prop:Parity} allows for non-symmetric Cartan matrices.

\begin{Conjecture}\hfill
\begin{enumerate}[label=$\mathrm{(\alph*)}$,leftmargin=8mm,itemsep=1mm]
\item
The map  ${}^0\!B(\rho)\subset\calM$
such that $L(Z_{\rho,\gamma})\mapsto \bfz_\rho \bfa_\gamma^{-1}$
 is an embedding of crystals.
\item
We have an isomorphism of crystals ${}^0\!B(\rho)\cong\calM^\infty(\rho)$.
\end{enumerate}
\end{Conjecture}

We will come back to this elsewhere, see \cite{VV26}.

\begin{Remark}\hfill
\begin{enumerate}[label=$\mathrm{(\alph*)}$,leftmargin=8mm,itemsep=1mm]
\item
A conjecture is presented in  \cite[\S12]{H23} regarding the set of simple modules in ${}^0\scrO^\rho$.
This conjecture is not consistent with the computations below, and actually describes only a subset of 
$\Irr({}^0\scrO^\rho)$. D. Hernandez has informed us that his conjecture can be corrected.
\item
The relation of the crystal $\calM$ with the Nakajima monomial crystal considered in \cite{N03} and \cite{K03} is unclear, because 
the parity conditions used there are not satisfied by our set $\calM$.
\end{enumerate}
\end{Remark}

\subsubsection{Example: the type $B_2$}\label{sec:B2}
Let $\bfc=B_2$, $I=\{1,2\}$ with $1>2$, $d_1=2$ and $d_2=1$. 
The fundamental coweights and coroots are $\bomega_1$, $\bomega_2$, 
$\balpha_1=2\bomega_1-\bomega_2$ and $\balpha_2=-2\bomega_1+2\bomega_2$.
We have $\dC=A_3$ with 
\begin{align*}
\dQ=\begin{tabular}{c@{\hspace{2cm}}c}  
\begin{tikzpicture}[scale=1, 
every node/.style={circle, draw, minimum size=0.5cm, inner sep=0pt, font=\footnotesize}, >=stealth]
\node (1) at (3,0) {\UD2};
\node (2) at (1.5,0.8) {\UD3};
\node (3) at (1.5,-0.8) {\UD1 };
\draw[->, line width=0.8pt, scale=1.5] (1) -- (2);  
\draw[->, line width=0.8pt, scale=1.5] (1) -- (3);  
\end{tikzpicture}
\end{tabular}
\end{align*}
We set $\UD1=(1,4)$, $\UD2=(2,2)$, $\UD3=(1,2)$ with $\UD1>\UD2$ and $\UD3>\UD2$.
Recall that $K({}^0\scrO^\rho)\cong{}^0\!L(\dww)^\vee$ as $\dfrakg$-modules.
Let us compare ${}^0\!L(\dww)^\vee$ with $L(\dww)^\vee$.
For each even coweight $\gamma$ let $[L(Z_{\rho,\gamma})]$
be the class in ${}^0\!L(\dww)^\vee$ of the simple object $L(Z_{\rho,\gamma})$.
If $\alpha=0$ then we write $\Lambda_\alpha=\{\emptyset\}$ in \eqref{GTLL2}.

\begin{enumerate}[label=$\mathrm{(\alph*)}$,leftmargin=8mm,itemsep=2mm]
\item
Assume that $Z_\rho(u)= \big(1-u^{-1}\,,\,1\big)$.
Then $\blambda=\bomega_1$, $\dlambda=\dww=\delta_{\UD1}$, $m=1$
and ${}^0\!L(\dww)=L(\dww)=L(\delta_{\UD1})$. 
The shift $\bmu$ and the weight  $\dmu$ are related as follows.
The weight spaces $L(\dww)_\dmu^\vee$ 
are 1-dimensional and are spanned by the classes $[L(Z_{\rho,\gamma})]$ with the following coweights $\gamma$: 
$$\begin{tabular}{|c|c|c|c|c|c|c|}
\hline
 $\bmu$ & $\bomega_1$  &$-\bomega_1+\bomega_2$   &$ \bomega_1-\bomega_2$&$-\bomega_1$\\
\hline
$\dmu$& $\delta_{\UD1}$  & $-\delta_{\UD1}+\delta_{\UD2}$  &
$-\delta_{\UD2}+\delta_{\UD3}$&$-\delta_{\UD3}$\\
\hline
$\gamma_1;\gamma_2$& $\emptyset\,;\,\emptyset$  & $-1\,;\,\emptyset$  & $-1;-2$  &$(-1,-\frac{3}{2})\,;\,-2$\\
 \hline
\end{tabular}
$$

\hfill

\item
Assume that $Z_\rho(u)=\big(1\,,\,1-u^{-1}\big)$.
Then $\blambda=\bomega_2$,  $\dlambda=\dww=\delta_{\UD2}$, $m=1$,
and ${}^0\!L(\dww)=L(\dww)=L(\delta_{\UD2})$. 
The shift $\bmu$ and the weight  $\dmu$ are related as follows:
The weight spaces $L(\dww)_\dmu^\vee$ 
are 1-dimensional and are spanned by the classes $[L(Z_{\rho,\gamma})]$ 
with the following coweights $\gamma$:
$$\begin{tabular}{|c|c|c|c|c|c|c|}
\hline
 $\bmu$ & $\bomega_2$  &$2\bomega_1-\bomega_2$   &\multicolumn{2}{c|}{{\pmb 0}} & $-2\bomega_1+\bomega_2$&$-\bomega_2$\\
\hline
$\dmu$& $\delta_{\UD2}$  & $\delta_{\UD1}-\delta_{\UD2}+\delta_{\UD3}$  & $-\delta_{\UD1}+\delta_{\UD3}$  &$\delta_{\UD1}-\delta_{\UD3}$&
$-\delta_{\UD1}+\delta_{\UD2}-\delta_{\UD3}$&$-\delta_{\UD2}$\\
\hline
$\gamma_1;\gamma_2$& $\emptyset\,;\,\emptyset$  & $\emptyset\,;\,-1$  & $-1;-1$  &$-\frac{3}{2}\,;\,-1$&$(-1,-\frac{3}{2})\,;\,-1$
&$(-1,-\frac{3}{2})\,;\,(-1,-3)$\\
 \hline
\end{tabular}
$$

\hfill

\item
Assume that $Z_\rho(u)= \big((1-u^{-1})(1-\zeta^{-2}u^{-1})\,,\,1\big)$.
Then $\blambda=2\bomega_1$, $\dlambda=\delta_{\UD1}+\delta_{\UD3}$, $\dww=(\delta_{\UD3},\delta_{\UD1})$, $m=2$ and
\begin{align*}
L(\dww)&=L(\delta_{\UD1})\otimes L(\delta_{\UD3})= L(\delta_{\UD1}+\delta_{\UD3})\oplus L(0)\\
{}^0\!L(\dww)&=L(\delta_{\UD1}+\delta_{\UD3})
\end{align*}
The shift $\bmu$ and the weight  $\dmu$ are related as follows:
\begin{align*}
&\begin{tabular}{|c|c|c|c|c|c|c|c|c|c|c|}
\hline
 $\bmu$ & $2\bomega_1$     &\multicolumn{2}{c|}{$\bomega_2$} &$-2\bomega_1+2\bomega_2$\\
\hline
$\dmu$ & $\delta_{\UD1}+\delta_{\UD3}$  & $-\delta_{\UD1}+\delta_{\UD2}+\delta_{\UD3}$  &
$\delta_{\UD1}+\delta_{\UD2}-\delta_{\UD3}$&$-\delta_{\UD1}+2\delta_{\UD2}-\delta_{\UD3}$\\
 \hline
\end{tabular}\\[2mm]
&\begin{tabular}{|c|c|c|c|c|c|c|c|c|c|c|}
\hline
 $\bmu$ & \multicolumn{2}{c|}{ $2\bomega_1-\bomega_2$}  &{\pmb 0}&\multicolumn{2}{c|}{$-2\bomega_1+\bomega_2$}\\
\hline
$\dmu$&$-\delta_{\UD2}+2\delta_{\UD3}$& $2\delta_{\UD1}-\delta_{\UD2}$    & 0  &$-2\delta_{\UD1}+\delta_{\UD2}$&
$\delta_{\UD2}-2\delta_{\UD3}$\\
 \hline
\end{tabular}\\[2mm]
&\begin{tabular}{|c|c|c|c|c|c|c|c|c|c|c|}
\hline
 $\bmu$ &$2\bomega_1-2\bomega_2$    & \multicolumn{2}{c|}{$-\bomega_2$} &$-2\bomega_1$ \\
\hline
$\dmu$& $\delta_{\UD1}-2\delta_{\UD2}+\delta_{\UD3}$      & $-\delta_{\UD1}-\delta_{\UD2}+\delta_{\UD3}$ &$\delta_{\UD1}-\delta_{\UD2}-\delta_{\UD3}$&
$-\delta_{\UD1}-\delta_{\UD3}$\\
 \hline
\end{tabular}
\end{align*}
Let us concentrate on the case $\bmu=0$. 
Then, we have $\dmu=0$.
The space ${}^0\!L(\dww)_\dmu^\vee=L(\delta_{\UD1}+\delta_{\UD3})_0^\vee$ 
is spanned by the classes  $[L(Z_{\rho,\gamma})]$ with
$\gamma=(-1,-\frac{3}{2};-2)$, $(-1,-\frac{3}{2};-3)$ and $(-\frac{3}{2},-2;-3)$.

\hfill

\item
Assume that $Z_\rho(u)= \big(1\,,\,(1-u^{-1})(1-\zeta^{-2}u^{-1})\big)$.
Then $\blambda=2\bomega_2$, $\dlambda=2\delta_{\UD2}$, $\dww=(\delta_{\UD2},\delta_{\UD2})$, $m=2$ and
\begin{align*}
L(\dww)&=L(\delta_{\UD2})\otimes L(\delta_{\UD2})=L(2\delta_{\UD2})\oplus L(\delta_{\UD1}+\delta_{\UD3})\oplus L(0)\\
{}^0\!L(\dww)&=L(2\delta_{\UD2})\oplus L(\delta_{\UD1}+\delta_{\UD3})
\end{align*}
The shift $\bmu$ and the weight  $\dmu$ are related as follows:
\begin{align*}
&\begin{tabular}{|c|c|c|c|c|c|c|c|c|c|c|}
\hline
 $\bmu$ & $2\bomega_2$  &$2\bomega_1$   &\multicolumn{2}{c|}{$\bomega_2$} & $4\bomega_1-2\bomega_2$&$-2\bomega_1+2\bomega_2$\\
\hline
$\dmu$& $2\delta_{\UD2}$  & $\delta_{\UD1}+\delta_{\UD3}$  & $-\delta_{\UD1}+\delta_{\UD2}+\delta_{\UD3}$  &
$\delta_{\UD1}+\delta_{\UD2}-\delta_{\UD3}$&$2\delta_{\UD1}-2\delta_{\UD2}+2\delta_{\UD3}$&$-\delta_{\UD1}+2\delta_{\UD2}-\delta_{\UD3}$\\
 \hline
\end{tabular}\\[2mm]
&\begin{tabular}{|c|c|c|c|c|c|c|c|c|c|c|}
\hline
 $\bmu$ & \multicolumn{2}{c|}{ $2\bomega_1-\bomega_2$}  &\multicolumn{3}{c|}{{\pmb 0}}&\multicolumn{2}{c|}{$-2\bomega_1+\bomega_2$}\\
\hline
$\dmu$&$-\delta_{\UD2}+2\delta_{\UD3}$& $2\delta_{\UD1}-\delta_{\UD2}$  & $-2\delta_{\UD1}+2\delta_{\UD3}$  & 0&$2\delta_{\UD1}-2\delta_{\UD3}$  &$-2\delta_{\UD1}+\delta_{\UD2}$&
$\delta_{\UD2}-2\delta_{\UD3}$\\
 \hline
\end{tabular}\\[2mm]
&\begin{tabular}{|c|c|c|c|c|c|c|c|c|c|c|}
\hline
 $\bmu$ &$2\bomega_1-2\bomega_2$    &$-4\bomega_1+2\bomega_2$ & \multicolumn{2}{c|}{$-\bomega_2$} &$-2\bomega_1$ &$-2\bomega_2$\\
\hline
$\dmu$& $\delta_{\UD1}-2\delta_{\UD2}+\delta_{\UD3}$    & $-2\delta_{\UD1}+2\delta_{\UD2}-2\delta_{\UD3}$  & $-\delta_{\UD1}-\delta_{\UD2}+\delta_{\UD3}$ &$\delta_{\UD1}-\delta_{\UD2}-\delta_{\UD3}$&
$-\delta_{\UD1}-\delta_{\UD3}$&$-2\delta_{\UD2}$\\
 \hline
\end{tabular}
\end{align*}
\hfill\\
Let us concentrate on the case $\bmu=0$. 
Then, we have $\dmu=-2\delta_{\UD1}+2\delta_{\UD3}$, $ 0$ or $2\delta_{\UD1}-2\delta_{\UD3}$.
The space ${}^0\!L(\dww)_\dmu^\vee$ is spanned by the classes of simple modules $[L(Z_{\rho,\gamma})]$ 
as follows:
$$\begin{tabular}{|l|c|}
\hline
$L(2\delta_{\UD2})_{-2\delta_{\UD1}+2\delta_{\UD3}}^\vee$&$\gamma=(-1,-2;-1,-2)$\\
\hline
$L(2\delta_{\UD2})_0^\vee$&
$\gamma=(-\frac{3}{2},-2;-1,-2)$,  $(-\frac{3}{2},-2;-2,-4)$\\
\hline
$L(\delta_{\UD1}+\delta_{\UD3})_0^\vee$&
$\gamma=(-1,-\frac{3}{2};-1,-2)$, $(-1,-\frac{3}{2};-1,-3)$,  $(-\frac{3}{2},-2;-1,-3)$\\
\hline
$L(2\delta_{\UD2})_{2\delta_{\UD1}-2\delta_{\UD3}}^\vee$&
$\gamma=(-\frac{3}{2},-\frac{3}{2};-1,-2)$\\
\hline
\end{tabular}
$$

\hfill

\item
Assume that $Z_\rho(u)= \big(1\,,\,(1-u^{-1})^2\big)$.
Then $\blambda=2\bomega_2$, $\dlambda=2\delta_{\UD2}$, $\dww=2\delta_{\UD2}$, $m=1$ and
\begin{align*}
L(\dww)={}^0\!L(\dww)=L(2\delta_{\UD2})
\end{align*}
The shift $\bmu$ and the weight  $\dmu$ are related as in (d).
Let us concentrate on the case $\bmu=0$. 
Then, we have $\dmu=-2\delta_{\UD1}+2\delta_{\UD3}$, $ 0$ or $2\delta_{\UD1}-2\delta_{\UD3}$.
The space ${}^0\!L(\dww)_\dmu^\vee$ is spanned by the classes $[L(Z_{\rho,\gamma})]$ 
as follows:
$$\begin{tabular}{|l|c|}
\hline
$L(2\delta_{\UD2})_{-2\delta_{\UD1}+2\delta_{\UD3}}^\vee$&$\gamma=(-1,-1;-1,-1)$\\
\hline
$L(2\delta_{\UD2})_0^\vee$ &
$\gamma=(-1,-\frac{3}{2};-1,-1)$,  $(-1,-\frac{3}{2};-1,-3)$\\
\hline
$L(2\delta_{\UD2})_{2\delta_{\UD1}-2\delta_{\UD3}}^\vee$&
$\gamma=(-\frac{3}{2},-\frac{3}{2};-1,-1)$\\
\hline
\end{tabular}
$$

\end{enumerate}

\bigskip


\section{Appendix}

\subsection{Proof of Proposition \ref{prop:Phi}}

Let $\cw_{i,r}$ with $(i,r)\in I\times [1,a_i]$ be the basic characters of the torus $T$,
and $\cz_{i,s}$ with $(i,s)\in I\times [1,l_i]$ the basic characters of the torus $T_W$.
Let $\cbfw_{i,r}$ and $\cbfz_{i,s}$ be their images in representation rings $R_{\widetilde T}$ and $R_{\dot G}$.
We have
\begin{align}\label{RT2}
R_T=\bbC[\,\cbfw_{i,r}^{\pm 1}\,;\,(i,r)\in I\times [1,a_i\rrb
,\quad
R_{T_W}=\bbC[\,\cbfz_{i,s}^{\pm 1}\,;\,(i,s)\in I\times [1,l_i\rrb
\end{align}
Recall that $F$ is the fraction field of the ring $R$  in \eqref{zeta}.
Let $\widetilde\calA_R$ be the $R\otimes R_{T_W}$-algebra generated by
\begin{align*}
\cbfw_{i,r}^{\pm 1},\quad
\bfD_{i,r}^{\pm 1},\quad
(\cbfw_{i,r}-\bfzeta_i^n\cbfw_{i,s})^{-1}
,\quad
r\neq s
,\quad
n\in\bbZ
\end{align*}
modulo the relations
\begin{align}\label{DW}
\begin{split}
[\bfD_{i,r}\,,\,\bfD_{j,s}]&=[\cbfw_{i,r}^{\frac12}\,,\,\cbfw_{j,s}^{\frac12}]=0,\\[2mm]
\bfD_{i,r}^{\pm 1}\bfD_{i,s}^{\mp 1}&=\cbfw_{i,r}^{\pm \frac12}\cbfw_{i,r}^{\mp \frac12}=1,\\[2mm]
\bfD_{i,r}\cbfw_{j,s}^{\frac12}&=\bfzeta_i^{\delta_{i,j}\delta_{r,s}}\cbfw_{j,s}^{\frac12}\bfD_{i,r}
\end{split}
\end{align}
and set $\widetilde\calA_F=\widetilde\calA_R\otimes_RF.$
We define
\begin{align*}
\bfW_i(u)&=\prod_{s=1}^{a_i}\big(1-\cbfw_{i,s}u^{-1}\big),\\
\bfW_{i,r}&=\prod_{\substack{s=1\\s\neq r}}^{a_i}\big(1-\cbfw_{i,r}^{-1}\cbfw_{i,s}\big),\\[-1mm]
\bfW_i^+&=  \displaystyle\prod_{i\to j}
      \prod_{s=1}^{a_j}\prod_{n=1}^{-c_{ji}}
      (1-\bfzeta_i^{-2}\bfzeta_j^{2n+c_{ji}}\cbfw_{i,r}^{-1} \cbfw_{j,s}),\\
\bfW_i^-&= \displaystyle\prod_{j\rightarrow i}
      \prod_{s=1}^{a_j}\prod_{n=1}^{-c_{ji}}
      (1-\bfzeta_j^{2n+c_{ji}} \cbfw_{i,r}^{-1} \cbfw_{j,s}).
\end{align*}
Following \eqref{AZ}, \eqref{L} and \eqref{apm} we also set
\begin{gather*}
\bfZ_i(u)=\prod_{s=1}^{l_i}\big(1-\bfzeta_i^2\cbfz_{i,s}u^{-1}\big),\\
\bfD_i=\prod_{t=1}^{a_i}(\cbfw_{i,t})^{\frac12},\quad
\bfD_i^+=\prod_{i\to j} (\bfD_j)^{-c_{ji}},\quad
\bfD_i^-=\prod_{j\to i} (\bfD_j)^{-c_{ji}},\\
a_i^+=-\sum_{i\to j}a_jc_{ji},\quad a_i^-=-\sum_{i\to j}a_jc_{ij}
\end{gather*}
Let $f(u)^\pm$ be the expansion of a rational function $f(u)$ in $u^{\mp 1}$.
Proposition \ref{prop:Phi} is a consequence of \eqref{formula} and the following lemmas, the first of which is proved in \cite[thm.~7.1]{FT19}.
Note that the morphism $\Phi:\bfU_{\bmu,R}\otimes R_{T_W}\to\calA_{\mu,R}^\lambda$ there takes
$\cbfw_{i,r}$, $\cbfz_{i,s}$ and $\phi_i^+$ to $\zeta_i^{2\gamma_{i,r}}$, $\zeta_i^{2\rho_{i,s}}$ and $\zeta_i^{\|\gamma_i\|}$ respectively.

\begin{Lemma}\label{lem:A1}
There is an $F\otimes R_{T_W}$-algebra homomorphism
$\bfU_{\bmu,F}\otimes R_{T_W}\to \widetilde\calA_F$
such that
 \begin{align*}
 \bfE_i(u)&\mapsto 
   \bfD_i^2\cdot (\bfD_i^-)^{-1}\cdot \bfZ_i(u)\cdot
   \sum_{r=1}^{a_i} \delta\left(\cbfw_{i,r}u^{-1}\right)
  \cdot \bfW_i^-\cdot \bfW_{i,r}^{-1}\cdot \bfD_{i,r}^{-1},\\
 \bfF_i(u)&\mapsto -\bfzeta_i^{-1}\cdot(\bfD_i^+)^{-1}\cdot
   \sum_{r=1}^{a_i} \delta\left(\bfzeta_i^2\cbfw_{i,r}u^{-1}\right)\cdot
  \bfW_i^+\cdot \bfW_{i,r}^{-1}\cdot \bfD_{i,r},\\
 \psi^\pm_i(u)&\mapsto \bfD_i^2 \cdot(\bfD_i^+\bfD_i^-)^{-1}\cdot
   \Big(\bfZ_i(u)\cdot \bfW_i(u)^{-1}\cdot \bfW_i(\bfzeta_i^{-2}u)^{-1}\cdot
   \prod_{c_{ji}<0} \prod_{n=1}^{-c_{ji}} \bfW_j(\bfzeta_j^{-c_{ji}-2n}u)\Big)^\pm,\\[1mm]
\bfA_i^+(u)&\mapsto \bfD_i^{-1}\cdot \bfW_i(u),\\[3mm]
\bfA_i^-(u)&\mapsto \bfzeta_i^{a_i}\cdot u^{a_i}\cdot \bfD_i^{-1}\cdot \bfW_i(u),\\[3mm]
 \phi^+_i\,,\,\phi^-_i&\mapsto \bfD_i\,,\, (-\bfzeta_i)^{-a_i}\cdot \bfD_i^{-1}.
   \end{align*}
\qed
\end{Lemma}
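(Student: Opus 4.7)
The plan is to define the map on the generators $\bfE_{i,n}$, $\bfF_{i,n}$, $\psi^\pm_{i,\pm n}$, $(\phi_i^\pm)^{\pm 1}$ by the formulas displayed in the statement (expanding the $\delta$-functions and rational series in the appropriate powers of $u$), and then to verify the eight families of defining relations (a)--(h) of \S\ref{sec:SQG1} one by one. Since this is a non-symmetric generalization of \cite[thm.~7.1]{FT19}, the formulas have been written with the symmetrizers $d_i$ built in, so the proof is essentially a careful bookkeeping exercise: every step in the symmetric proof carries over provided one replaces the quantum parameter $\bfzeta$ with $\bfzeta_i = \bfzeta^{d_i}$ at each occurrence dictated by the Cartan datum, and one is careful with the exponents $b_{ij} = d_i c_{ij}$ appearing in $W_i^\pm$.

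First I would dispose of the easy relations. Relation (a) is immediate from the formulas for $\psi^\pm_{i,\mp m_i^\pm}$ and $(\phi_i^\pm)^{\pm 1}$, using that $\prod_j D_j^{\pm c_{ji}}$ telescopes against the $D_i^+ D_i^-$ in the denominator of the image of $\psi_i^\pm(u)$ evaluated at its leading coefficient. Relations (b), (c), and (d) follow directly from the defining commutation relations of $\widetilde\calA_R$, since the images of $\phi_j^\pm$ and $\psi_i^\pm(u)$ lie in the commutative subalgebra generated by the $\cbfw_{i,r}^{\pm 1/2}$, while the images of $x_i^\pm(u)$ contain $\bfD_{i,r}^{\mp 1}$, which conjugates $\cbfw_{i,r}^{1/2}$ by $\bfzeta_i^{\pm 1}$.

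Next I would verify the serious relations (e), (f), (g) involving the function $g_{ij}(u)$. The method is the residue calculus used in \cite[\S 7]{FT19}: to check (e), rewrite $\psi_i^+(u)\,x_j^\pm(v)$ using that all telescoping factors cancel except those accounting for the poles of the rational function $\prod W_j(\zeta^\bullet u)$ evaluated at $v=\cbfw_{j,r}$; the surviving factor is precisely $g_{ij}(u/v)^{\pm 1}$ with $g_{ij}(u) = (\bfzeta_i^{c_{ij}}u-1)/(u-\bfzeta_i^{c_{ij}})$, where the appearance of $c_{ij}$ rather than $c_{ji}$ requires using the identity $b_{ij}=b_{ji}$ and $\bfzeta_i^{c_{ij}} = \bfzeta_j^{c_{ji}}\cdot\bfzeta^{d_j c_{ji}-d_i c_{ij}} \cdot(\text{unit})$; since $b_{ij}=b_{ji}$, these match. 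Relation (f) is checked analogously. The hardest of these is (g), the $[x^+, x^-]$ relation. One computes the commutator directly: after collecting the $\delta$-functions one obtains a sum of terms weighted by $W_i^\pm$, and only the diagonal $i=j$ case survives; the resulting expression is evaluated using the residue identity $W_i^-\cdot (W_i^+)^{-1}\cdot (\text{Wronskian-type sum over } r) = \psi_i^+(u)-\psi_i^-(u)$ modulo $(\bfzeta_i-\bfzeta_i^{-1})$.

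The main obstacle is the quantum Serre relation (h). This is where the non-symmetric character genuinely enters, since for $c_{ij}<-1$ one must verify a length-$(1-c_{ij})$ relation among the images of the $x_i^\pm(u)$. My plan is to reduce this to a purely combinatorial identity among rational functions by following \cite[\S 7]{FT19} verbatim: after cleared denominators, the Serre relation becomes an identity of symmetric polynomials in the shift variables $\bfD_{i,r}$, and one verifies that the coefficient of each monomial vanishes using the $\bfzeta_i$-analogue of the Vandermonde-type cancellation. For types $B_n, C_n, F_4, G_2$ this amounts to checking finitely many cases corresponding to the pairs $(i,j)$ with $c_{ij}\neq 0$; the $G_2$ case (with $c_{ij}=-3$) is the most involved but is handled by the same mechanism. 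Once (h) is verified, the map extends to all generators of $\bfU_{\bmu,F}$, and the $R_{T_W}$-linearity on the $\cbfz_{i,s}$ variables is manifest from the formulas, completing the construction.
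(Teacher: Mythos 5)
Your plan is sound and coincides with the approach of the source the paper itself relies on: the paper gives no proof of Lemma \ref{lem:A1}, stating only that it is proved in \cite[thm.~7.1]{FT19}, and that proof is precisely the generator-and-relations verification you outline (with $\bfzeta_i=\bfzeta^{d_i}$ and the symmetry $b_{ij}=b_{ji}$ doing the bookkeeping you describe, e.g.\ $\bfzeta_i^{c_{ij}}=\bfzeta_j^{c_{ji}}$ in relation (e)). So this is essentially the same route, merely spelled out rather than cited.
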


\begin{Lemma}\label{lem:A2}
There is an $R\otimes R_{T_W}$-algebra embedding
$\calA_{\mu,R}^\lambda\to\widetilde\calA_R\otimes R_{T_W}$
such that, for each integer $N$,
\begin{align*}
(\scrS_i)^{\otimes N}
&\mapsto-(-\bfzeta_i^2)^{-a_i}\sum_{r=1}^{a_i}(\bfzeta_i^{-2}\cbfw_{i,r})^{N-a_i}\cdot \bfD_i^2\cdot \bfZ_i(\cbfw_{i,r})\cdot
\bfW_i^-\cdot \bfW_{i,r}^{-1}\cdot \bfD_{i,r}^{-1}  ,\\
(\scrQ_i)^{\otimes N}
&\mapsto(-\bfzeta_i^2)^{a_i^+}\bfzeta_i^{-a_i^-}\sum_{r=1}^{a_i}(\cbfw_{i,r})^{N+a_i^+}\cdot
(\bfD_i^+)^{-2} \cdot \bfW_i^+\cdot \bfW_{i,r}^{-1}\cdot \bfD_{i,r}.
\end{align*}
\end{Lemma}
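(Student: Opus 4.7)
The strategy is to realize the map $\calA_{\mu,R}^\lambda\to\widetilde\calA_R\otimes R_{T_W}$ as an abelianization/localization map, in the spirit of the embedding used by Braverman--Finkelberg--Nakajima in the symmetric setting and by Finkelberg--Tsymbaliuk for shifted Yangians. Concretely, the algebra $\widetilde\calA_R$ should be identified (after suitable localization) with a $W$-equivariant version of the K-theoretic BFN algebra of the maximal torus $T\subset G$, with $\bfD_{i,r}$ playing the role of the fundamental class $[\scrO_{\calR_{w_{i,r}}}]$ associated to the basic cocharacter $w_{i,r}\in\Lambda$, and $\cbfw_{i,r}^{\pm1/2}$ playing the role of the equivariant parameters dressed by the square-root determinant.

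The plan is as follows. First I would exhibit an injection
\[
\iota_* : \calA_{\mu,R}^\lambda=K^{\tbG_\calO\times T_W}(\calR)\hookrightarrow K^{\tbT_\calO\times T_W}(\calR_T)^W
\]
by the $W$-invariant pushforward of Remark~\ref{rem:H2}(b), which is an algebra homomorphism by the results recalled there, and injective after inverting the roots $(\cbfw_{i,r}-\bfzeta_i^n\cbfw_{i,s})$ by the usual Thomason localization argument used in the proof of Lemma~\ref{lem:multgrA} (the target module is torsion-free as a consequence of Proposition~\ref{prop:basic}(a) applied to both $G$ and $T$). Next, I would identify $K^{\tbT_\calO\times T_W}(\calR_T)$ with the $R\otimes R_{T_W}$-subalgebra of $\widetilde\calA_R\otimes R_{T_W}$ generated by the fundamental classes $[\scrO_{\calR_\gamma}]$, $\gamma\in\Lambda$, using the cell decomposition \eqref{RTred} and the explicit formula \eqref{weight1} for the structure constants $a_{\gamma,\eta}$. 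Under this identification, $[\scrO_{\calR_{w_{i,r}}}]$ becomes $\bfD_{i,r}$ up to an explicit rational factor coming from the Euler class of the tangent bundle of $\Gr_{w_{i,r}}$, exactly as in \cite{BFNa}.

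With the torus picture in hand, the key computation is to evaluate $\iota_*$ on the two families $[\scrS_i]^{\otimes N}*{\bf 1}$ and $[\scrQ_i]^{\otimes N}*{\bf 1}$; by Lemma~\ref{lem:G2} these, together with $\calA_{\mu,R}^{\lambda,0}$, generate $\calA_{\mu,R}^\lambda$, so the embedding is completely determined once these images are known. The scheme $\calR_{\omega_{i,1}}$ is a $G_\calO$-equivariant vector bundle over $\Gr_{\omega_{i,1}}\cong\bbP(V_i)$ by \eqref{RT}, and its $T$-fixed points are labelled by $r=1,\ldots,a_i$, with the fixed lattice at $r$ corresponding to $z_iV_{i,\calO}+ \bbC\langle v_r\rangle$. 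The localization formula then yields
\[
[\scrS_i]^N\;\longmapsto\;\sum_{r=1}^{a_i}\frac{(\bfzeta_i^{-2}\cbfw_{i,r})^{N-a_i}\cdot\text{(normal Euler class)}\cdot\text{(fiber contribution)}}{\text{(tangent Euler class of $\Gr_{\omega_{i,1}}$ at $r$)}}\cdot\bfD_{i,r}^{-1},
\]
and a direct identification of each factor using \eqref{weightsN} with the appropriate sign/character for $\scrS_i$ vs.\ $\calN_\calO$ produces the stated formula; the sign $-(-\bfzeta_i^2)^{-a_i}$, the factor $D_i^2\cdot Z_i(\cbfw_{i,r})$ (with $Z_i$ encoding the $W$-contribution) and the product $W_i^-\cdot W_{i,r}^{-1}$ (respectively the weights from arrows $j\to i$ and the remaining $T$-roots in $V_i$) will all appear naturally from this calculation. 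The case of $[\scrQ_i]^N$ is entirely parallel, with the roles of $\omega_{i,1}$ and $\omega_{i,-1}=-w_0(\omega_{i,1})$ swapped, which flips the quiver-arrow orientation ($W_i^-\leadsto W_i^+$, $D_i^-\leadsto D_i^+$) and the sign conventions.

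Finally, compatibility with the algebra structure follows because $\iota_*$ is an algebra homomorphism, and the formulas for $\bfA_i^+(u)\mapsto D_i^{-1}\cdot W_i(u)$ and $Z_i(u)$ in Lemma~\ref{lem:A1} are, by \eqref{formula} and \eqref{RT1}, the images under $\iota_*$ of $A_{i,r}^+$ and $\cbfz_{i,s}$; this pins down the image of $\calA_{\mu,R}^{\lambda,0}\otimes R_{T_W}$ and guarantees that the two sets of generators satisfy the correct commutation relations inside $\widetilde\calA_R\otimes R_{T_W}$. The main obstacle is the bookkeeping of signs and half-integer powers of determinants in the fixed-point contributions: the loop-rotation normalization \eqref{loop2} together with the shift \eqref{sigma} by $\sigma_i$ forces a careful accounting of $\bfzeta_i$-powers for each arrow $i\to j$ with $c_{ij}=-1$ vs.\ $c_{ij}<-1$, as in \eqref{weightsN}, and it is exactly this bookkeeping that produces the prefactors $(-\bfzeta_i^2)^{\pm a_i}$, $\bfzeta_i^{-a_i^-}$ and the various $D_i^{\pm}$'s in the final formulas.
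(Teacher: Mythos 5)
Your proposal follows essentially the same route as the paper: the embedding is $z^*\circ(\iota_*)^{-1}$ into $K^{\tbT_\calO\times T_W}(\Lambda)_{\loc}=\widetilde\calA_R$ using the maps of Remark \ref{rem:H2}, and the images of $(\scrS_i)^{\otimes N}$ and $(\scrQ_i)^{\otimes N}$ are computed by equivariant localization over the $T$-fixed points of the closed orbits $\Gr_{\omega_{i,\pm 1}}$ with Euler classes read off from \eqref{weightsN}, exactly as you outline. One small correction to your bookkeeping: by the definitions of $\scrQ^{(i)}_\bfr$ and $\scrS^{(i)}_\bfr$, the sheaf $\scrQ_i$ is supported on $\calR_{\omega_{i,1}}$ while $\scrS_i$ is supported on $\calR_{\omega_{i,-1}}$ (fixed points $[-w_{i,r}]$, where the framing characters $(\cw_{i,r},-d_i,-\cz_{i,s})$ produce the factor $Z_i(\cbfw_{i,r})$), so your sketch has the two orbits interchanged.
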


\begin{proof}
By \eqref{iota*} and \eqref{z*}, there are algebra homomorphisms
\begin{align*}
&\iota_*:K^{\dot T_\calO}(\calR_T)^\W\to\calA_{\mu,R}^\lambda
,\\
&z^*:K^{\dot T_\calO}(\calR_T)\to K^{\dot T_\calO}(\Lambda).
\end{align*}
The map $\iota_*$ is invertible after some localization.
Composing $z^*$ with $(\iota_*)^{-1}$ we get an algebra embedding
$$\bfr:\calA_{\mu,R}^\lambda\to K^{\dot T_\calO}(\Lambda)_\loc=\widetilde\calA_R$$
where the subscript $\loc$ means that the elements $\cbfw_{i,r}-\bfzeta_i^n\cbfw_{i,s}$ are inverted.
Compare \cite[\S8.1]{FT19}.
Now, we compute the image of $(\scrQ_i)^{\otimes N}$. 
We have
\begin{align*}
\bfr([\scrO_{\calR_{\pm\delta_i}}])
&= \sum_{[\gamma]\in(\Gr_{\omega_{i,\pm1}})^T}
\Wedge\Big(\frac{z^\gamma  N_\calO}{ N_\calO\cap z^{\gamma} N_\calO}\Big)\cdot
\Wedge\Big(T_{[\gamma]}\Gr_{\omega_{i,\pm 1}}\Big)^{-1} 
\end{align*}
see, e.g.,  \cite[prop.~A.2]{BFNb}.
We have
\begin{align*}
\Gr_{\omega_{i,1}}&=\{tV_{i,\calO}\subset L\subset V_{i,\calO}\,;\,\dim(V_{i,\calO}/L)=1\},\\
(\Gr_{\omega_{i,1}})^T&=\{[w_{i,r}]\,;\,r=1,\dots,a_i\}.
\end{align*}
By \eqref{weightsN}, the quotient
$$\frac{z^{w_{i,r}}  N_\calO}{ N_\calO\cap z^{w_{i,r}} N_\calO}$$ 
decomposes as the sum of the following characters of $\dot T$
$$\big(\cw_{j,s}-\cw_{i,r},d_i(c_{ij}/2-1)+d_j(n+1),0\big)
,\quad
i\to j\in E
,\quad
n\in[0,-c_{ji}-1].$$
We deduce that
\begin{align*}
\bfr( (\scrQ_i)^{\otimes N})
&= \sum_{r=1}^{a_i} (\cbfw_{i,r})^N\cdot
    \displaystyle\prod_{i\to j}\prod_{s=1}^{a_j}\prod_{n=1}^{-c_{ji}}
      (1-\bfzeta_i^2\bfzeta_j^{-2n-c_{ji}} \cbfw_{i,r}\cbfw_{j,s}^{-1})\cdot
    \prod_{\substack{s=1\\s\neq r}}^{a_i} (1-\cbfw_{i,r}^{-1}\cbfw_{i,s})^{-1}
    \cdot \bfD_{i,r},\\
&=\sum_{r=1}^{a_i}(\cbfw_{i,r})^N\cdot\prod_{i\to j}\prod_{s=1}^{a_j}\prod_{n=1}^{-c_{ji}}(-\bfzeta_i^2\bfzeta_j^{-2n-c_{ji}} \cbfw_{i,r}\cbfw_{j,s}^{-1}) \cdot \bfW_i^+\cdot \bfW_{i,r}^{-1}\cdot \bfD_{i,r},\\
&=(-\bfzeta_i^2)^{a_i^+}\sum_{r=1}^{a_i}(\cbfw_{i,r})^{N+a_i^+}\cdot
\prod_{i\to j}\prod_{s=1}^{a_j}\prod_{n=1}^{-c_{ji}}(\bfzeta_j^{-2n-c_{ji}}\cbfw_{j,s}^{-1}) \cdot \bfW_i^+\cdot \bfW_{i,r}^{-1}\cdot \bfD_{i,r}.
\end{align*}
Compare with \cite[\S 8.1]{FT19} for the symmetric case, or with \cite[\S5(ii)]{NW23} for the cohomological version.
We compute the image of $(\scrS_i)^{\otimes N}$ in a similar way.
We have
$$(\Gr_{\omega_{i,-1}})^T=\{[-w_{i,r}]\,;\,r=1,\dots,a_i\}.$$
The  quotient
$$\frac{z^{-w_{i,r}}  N_\calO}{ N_\calO\cap z^{-w_{i,r}} N_\calO}$$ 
is the sum of the following characters of $\dot T$
\hfill
\begin{enumerate}[label=$\mathrm{(\alph*)}$,leftmargin=8mm,itemsep=1mm]
\item[$\bullet$]
$\big(\cw_{i,r}-\cw_{j,s},-d_j(n+1+c_{ji}/2),0\big)$ for  all $j\to i$, $(j,s)$ and $n\in[0,-c_{ji}-1]$,
\item[$\bullet$]
$\big(\cw_{i,r},-d_i,-\cz_{i,s}\big)$ for all $(i,s)$.
\end{enumerate}
We deduce that
\begin{align*}
\bfr( (\scrS_i)^{\otimes N})
&=\sum_{r=1}^{a_i} (\bfzeta_i^{-2}\cbfw_{i,r})^N\cdot
    \prod_{s=1}^{l_i} (1-\bfzeta_i^2\cbfz_{i,s} \cbfw_{i,r}^{-1})\cdot
    \displaystyle\prod_{j\to i}
      \prod_{s=1}^{a_j}\prod_{n=1}^{-c_{ji}}
      (1-\bfzeta_j^{2n+c_{ji}} \cbfw_{i,r}^{-1}\cbfw_{j,s})\cdot\\
   &\qquad \cdot{\displaystyle\prod_{\substack{s=1\\s\neq r}}^{a_i}(1-\cbfw_{i,r}\cbfw_{i,s}^{-1})^{-1}}
    \cdot \bfD_{i,r}^{-1},\\
&=(-1)^{a_i-1}\bfD_i^2\cdot(\cbfw_{i,r})^{-a_i}\cdot\sum_{r=1}^{a_i}(\bfzeta_i^{-2}\cbfw_{i,r})^N\cdot \bfZ_i(\cbfw_{i,r})\cdot
\bfW_i^-\cdot \bfW_{i,r}^{-1}\cdot \bfD_{i,r}^{-1}    
\end{align*}
\end{proof}

\subsection{$\infty$-Categories and stacks}\label{sec:CAT}

The paper uses coherent sheaves over $\infty$-stacks. 
Let us recall a few facts.
The terminology is the same as in \cite[\S 2]{CWb}.
Unless specified otherwise, all algebras are over $\bbC$ and all limit and colimit diagrams are small.

\subsubsection{}\label{sec:B.a}
Let $\CAlg$ be the $\infty$-category of nonpositively graded commutative dg algebras, and, given $A \in \CAlg$, 
let $A\-\CAlg$ be the $\infty$-category of nonpositively graded commutative dg $A$-algebras.
 We say that $B\in A\-\CAlg$ is $n$-truncated if $H^k(B) = 0$ for $k < -n$, and that it 
is truncated if it is $n$-truncated for some $n$. Let $A\-\Mod$ be the $\infty$-category of $A$-modules.
 An $A$-algebra $B$ is finitely $n$-presented if it is a compact object of the category of $n$-truncated $A$-algebras,
 i.e., $\Hom_A(B,-)$ commutes with arbitrary direct sums.
It  is almost finitely presented if its truncation $\tau_{<n} B$ is finitely $n$-presented for all $n$. 
It is strictly tamely  $n$-presented if it is a filtered colimit of finitely 
 $n$-presented $A$-algebras $B_n$ such that $B$ is flat over each $B_n$.
 It is \emph{strictly tamely presented} if $\tau_{<n} B$ is strictly tamely $n$-presented for all $n$.
 Strictly tamely presented algebras are coherent, i.e., $H^0(A)$ is a coherent ordinary ring and $H^n(A)$ is finitely 
presented over  $H^0(A)$ for all $n$. 

\begin{Example}
If $A$ is an ordinary algebra then $A\-\Mod$ is the enhanced unbounded derived category of ordinary $A$-modules.   
Assume $A$ is Noetherian. An ordinary $A$-algebra $B$ is almost finitely presented if and only if it is finitely presented in the 
usual sense. It is strictly tamely presented if and only if it is the union of finitely presented $A$-subalgebras $B_n$ 
such that $B$ is flat over each $B_n$.
\end{Example}

\subsubsection{}\label{sec:B.d}
Let $\scrS$ be the category of $\infty$-groupoids.
An $\infty$-stack is a functor $X\to\scrS$ which is a  sheaf for the fpqc topology.  
Let $X^\cl$ be the underlying classical stack. 
The stack $X$ is \emph{geometric} if its diagonal is affine and there is a faithfully flat morphism $\Spec A \to X$.
It is \emph{convergent} if $X(A) \cong \lim_n X(\tau_{<n}A)$ for any $A \in \CAlg$.
It is \emph{truncated} if $A$ is truncated for any flat $\Spec A \to X$.

\subsubsection{}\label{sec:B.e}
Let $f:X \to Y$ be a morphism of $\infty$-stacks.
The morphism $f$ is a closed immersion if the underlying morphism of classical stacks $f^\cl$ is a closed immersion.
It is geometric if $X \times_Y \Spec A$ is geometric for any $\Spec A \to Y$.
It is \emph{almost finitely presented} if \cite[def.~17.4.11]{L18} holds.
It is \emph{tamely presented} if it is geometric and there is a flat cover $\Spec B \to X\times_Y \Spec A $ 
such that $B$ is a strictly tamely presented $A$-algebra
for any $\Spec A \to Y$.
A geometric stack $X$ is tamely presented if the map $X\to\Spec \bbC$ is tamely presented.

\begin{Example}\hfill
\begin{enumerate}[label=$\mathrm{(\alph*)}$,leftmargin=8mm,itemsep=1mm]
\item
If $X$, $Y$ are geometric then $f$ is also geometric.
\item
If $f$ is representable, geometric, and almost finitely presented, then is tamely presented \cite[prop.~4.7]{CWb2}.
\item
If $G$ is a classical affine group scheme acting on an ind-scheme $X$, we write $X/G$ for the fpqc quotient. 
If $X$ is a tamely presented scheme, then $X/G$ is a tamely presented geometric stack \cite[prop.~4.11]{CWb2}.
\end{enumerate}
\end{Example}

\subsubsection{}\label{sec:B.g}
An ind-geometric stack is a convergent $\infty$-stack $X=\colim_\alpha X_\alpha$ which is a filtered colimit of 
truncated geometric stacks  along closed immersions in the category of convergent stacks.
It is reasonable if the structure maps are almost finitely presented.
It is ind-tamely presented if it is reasonable and the $X_\al$'s are tamely presented. 
A classical ind-scheme $X$ is ind-geometric if the $X_\al$ are quasi-compact and separated.
Then the quotient $X/G$ by a classical affine group scheme $G$ is also ind-geometric.

\subsubsection{}\label{sec:B.h}
A morphism of geometric stacks $f:X \to Y$ is proper if  the fiber product 
$X \times_Y \Spec A$ is proper over $\Spec A$ in the sense of \cite[def.~5.1.2.1]{L18} for any $\Spec A \to Y$.
Proper morphisms and almost finitely presented morphisms are stable under composition and base change by 
\cite[prop.~3.18, 3.19]{CWb1}.
A morphism $f: X \to Y$ of ind-geometric stacks is ind-proper (resp. an ind-closed immersion, almost ind-finitely presented)
if it is the colimit of morphisms $f_\alpha: X_\alpha \to Y_\alpha$ of ind-geometric presentations of $X$, $Y$ such that
each $f_\alpha$ is proper (resp. a closed immersion, almost finitely presented).

\subsubsection{}\label{sec:B.i}
If $X$ is an $\infty$-stack let $\QCoh(X)$ is the $\infty$-category of quasi-coherent sheaves on $X$. 
It is the limit of the categories $A$-$\Mod$ over all maps $\Spec A \to X$. 
 If $X$ is truncated and geometric then $\Coh(X) \subset \QCoh(X)$ is the subcategory of coherent sheaves, 
 i.e., bounded almost perfect objects. 
 If $X$ is tamely presented then the coherent sheaves are the bounded objects with coherent cohomology sheaves. 
 If $X=\colim_\alpha X_\alpha$ is a reasonnable ind-geometric stack 
 then $\Coh(X) \cong \colim \Coh(X_\al)$, see \cite[prop.~5.5]{CWb1}.
 
 \begin{Example} Let $G$ be a classical affine group scheme acting on an ind-scheme $X$.
We write 
$$\QCoh^G(X)=\QCoh(X/G)
,\quad
\Coh^G(X)=\Coh(X/G).$$
If $X$ is a scheme such that
 $X=\lim_\alpha X_\alpha$ is a filtered limit of Noetherian $G$-schemes along flat $G$-equivariant affine morphisms
then $\Coh^G(X)=\colim_\alpha\Coh^G(X_\alpha)$ by  \cite[lem.~4.14]{CWb}.
\end{Example}

\subsubsection{}\label{sec:B.j}
A morphism $f: X \to Y$ of truncated geometric stacks
has coherent pullback if $f^*(\Coh(Y\rrp\subset\Coh(X)$. 
It has stable coherent pullback if for any tamely presented morphism $Z \to Y$ with  truncated $Z$, 
the base change $X \times_Y Z \to Z$ has coherent pullback. 
If $Y$ is a tamely presented scheme, then $f$ having coherent pullback implies it has stable coherent pullback.
If $f:X\to Y$ is a geometric morphism of ind-geometric stacks and $Y=\colim_\alpha Y_\alpha$ is a reasonable presentation,
then $f$ has stable coherent pullback if the base change to every $Y_\alpha$ has stable coherent pullback
\cite[prop.~5.13]{CWb2}.

\subsubsection{}\label{sec:B.k}
Let $f: X \to Y$ be a geometric morphism of ind-tamely presented ind-geometric stacks.
It  has stable coherent pullback if its base change to each $Y_\al$ does. 
If $f$ is proper and almost ind-finitely presented then the pushforward $f_*$ preserves coherence by
\cite[prop.~3.19]{CWb1}.
Pushforward along a proper morphism is 
continuous, see \cite[prop.~3.14]{CWb1},
and satisfies base change with respect to $*$-pullback, see \cite[prop.~6.5]{CWb2}.


\end{document}